\documentclass[12pt]{amsart}
\usepackage[margin=1in,includeheadfoot]{geometry}
\usepackage{amsmath}
\usepackage{slashed}
\usepackage{graphicx}

\usepackage[margin=1in,includeheadfoot]{geometry}
\usepackage{amsmath}
\usepackage{slashed}
\usepackage{graphicx}
\usepackage{mathrsfs}
\usepackage{txfonts}
\usepackage{amssymb,dsfont}
\usepackage{enumerate}
\usepackage{slashed}
\usepackage{fancyhdr}
\usepackage{aliascnt}
\usepackage{pdfsync}
\usepackage{hyperref}

\vfuzz2pt 
\hfuzz2pt 
\newtheorem{thm}{Theorem}[section]

\newtheorem{lem}[thm]{Lemma}
\newtheorem{prop}[thm]{Proposition}
\theoremstyle{definition}
\newtheorem{defn}[thm]{Definition}
\theoremstyle{remark}
\newtheorem{rem}[thm]{Remark}
\numberwithin{equation}{section}

\newcounter{stepnum}

\def\bee{\begin{eqnarray}}
\def\beee{\begin{eqnarray*}}
\def\eee{\end{eqnarray}}
\def\eeee{\end{eqnarray*}}

\def\ba{\begin{array}}
\def\ea{\end{array}}

\def\R{\mathbb R}



\begin{document}

\title[Chern-Simon-Higgs]{The qualitative behavior at a vortex point for the Chern-Simon-Higgs equation}

\author[Li]{Jiayu Li}%
\address{School of Mathematical Sciences, University of Science and Technology of China, Hefei, 230026, P. R. China}%
\email{jiayuli@ustc.edu.cn}%

\author[Liu]{Lei Liu}
\address{School of Mathematics and Statistics \& Hubei Key Laboratory of
Mathematical Sciences, Central China Normal University, Wuhan 430079,
P. R. China}%
\email{leiliu2020@ccnu.edu.cn}

\thanks{The work was carried out when Lei Liu was visiting the China-France Mathematics Center,  University of Science and Technology of China. He would like to thank the center for the hospitality and the good working conditions.}

\subjclass[2010]{}
\keywords{Chern-Simon-Higgs equation, Blow up, Energy identity, Vortex point}

\date{\today}
\begin{abstract}
In this paper, we study the qualitative behavior at a vortex blow-up point for Chern-Simon-Higgs equation. Roughly speaking, we will establish an energy identity at a each such point, i.e. the local mass is the sum of the bubbles. Moreover, we prove that either there is only one bubble which is a singular bubble or there are more than two bubbles which contains no singular bubble. Meanwhile, we prove that the energies of these bubbles must satisfy a quadratic polynomial which can be used to prove the simple blow-up property when the multiplicity is small. As is well known, for many Liouville type system, Pohozaev type identity is a quadratic polynomial corresponding to energies which can be used directly to compute the local mass at a blow-up point. The difficulty here is that, besides the energy's integration,  there is a additional term in the Pohozaev type identity of Chern-Simon-Higgs equation. We need some more detailed and delicated analysis to deal with it.
\end{abstract}
\maketitle
\section{introduction}

\

The present model deals with a simplified form of the so-called anyon model, a classical field theory defined on $(2+1)$ dimensional Minkowski apace where the Lagrangian couples Maxwell and Chern-Simons terms coming from a gauge field with a scalar field. Generally, the Euler-Lagrange equations of this model are complicated. For a simplified version without the Maxwell term, Hong-Kim-Pac \cite{Hong-Kim-Pac} and Jackiw-Weinberg \cite{Jackiw-Weinberg} proposed a special choice of $6^{th}$ order potential where there may exist time-independent vortex solutions satisfying a first order Bogomolny type self-dual equations.

Mathematically, Chern-Simons-Higgs 6-th model can be described as follows. Let $M$ be a compact Riemann surface with volume $1$ and $L$ be a line bubble over $M$. For a unitary connection $D_A=d+A$ on $L$ with curvature $F_A$, and a section $\phi$ of $L$,  Hong-Kim-Pac \cite{Hong-Kim-Pac} and Jackiw-Weinberg \cite{Jackiw-Weinberg} introduced Chern-Simons-Higgs 6-th functional as follows:
\begin{align*}
CS(A,\phi)&=\int_M\left(|D_A\phi|^2+\frac{\epsilon^2}{|\phi|^2}|F_A|^2+\frac{1}{4\epsilon^2}|\phi|^2(1-|\phi|^2)^2\right)dV
\\&=\int_M\left(|(D_1+iD_2)\phi|^2+\left(\frac{\epsilon}{|\phi|}F-\frac{1}{2\epsilon}|\phi|(1-|\phi|^2)\right)^2\right)dV+2\pi\ deg\ L.
\end{align*}

The absolute minimizers of above functional satisfy the Bogomolny type self-dual equations
\begin{align}
\begin{cases}
(D_1+iD_2)\phi&=0,\\
\frac{1}{2}|\phi|^2(1-|\phi|^2)&=\epsilon^2 F.
\end{cases}
\end{align}
Putting $$v:=\log |\phi|^2,$$ we reduce the above system to the following single scalar equation that
\begin{equation}\label{equ:chern-simons}
\Delta v=\frac{4}{\epsilon^2}e^v(e^v-1)+4\pi\sum_{j=1}^JN_j\delta_{p_j},\ \ in\ \ M,
\end{equation}
where $p_j\in M$, $N_j$ is a positive integer, $\delta_{p_j}$ is the Dirac measure at $p_j$, $j=1,...,J$. For more details of deriving this equation, we refer to \cite{Hong-Kim-Pac,Jackiw-Weinberg,Ding-Jost-Li-Wang-1,Nolasco-Tarantello,Tarantello}.

In this paper, we want to study the compactness problem for a sequence of solutions of \eqref{equ:chern-simons}. Let $v_n$ satisfies
\begin{equation}
\Delta v_n(x)=\lambda_ne^{v_n(x)}(e^{v_n(x)}-1)+4\pi\sum_{j=1}^JN_j\delta_{p_j},
\end{equation} where $N_1,...,N_J$ are positive integers, $\sum_{j=1}^JN_j=\bar{N}$ and $\lim_{n\to\infty}\lambda_n\to +\infty$.

Let $u_0$ be the corresponding Green function, i.e.
\begin{align*}
\begin{cases}
\Delta u_0&=-4\pi \bar{N}+4\pi\sum_{j=1}^JN_j\delta_{p_j},\\
\int_Mu_0dM&=0.
\end{cases}
\end{align*}
Then $u_n=v_n-u_0$ satisfies
\begin{equation}\label{equ:chern-simon-2}
-\Delta u_n=\lambda_ne^{u_0} e^{u_n}(1-e^{u_0} e^{u_n})-4\pi \bar{N}.\end{equation}

Denote $$\bar{u}_n=\int_Mu_ndM,\ \ w_n(x):=u_n(x)-\bar{u}_n.$$ In \cite{Lan-Li,Choe-Kim}, they proved the following theorem that
\begin{thm}[\cite{Lan-Li,Choe-Kim}]\label{thm:1}
Let $M$ be a compact Riemann surface with volume $1$.  Then passing to a subsequence, the following alternatives hold.
\begin{itemize}
\item[(1)] If $\lim_{n\to\infty}\lambda_ne^{\bar{u}_n}=+\infty$, we have $u_n\to -u_0$ in the sense of $W^{1,p}(M),\ (1<p<2)$ as $n\to\infty$.

    \

\item[(2)] If $\lim_{n\to\infty}\lambda_ne^{\bar{u}_n}=c>0$, we have $w_n\to w_0$ weakly in  $W^{1,p}(M),\ (1<p<2)$ as $n\to\infty$, where $w_0$ satisfies $$-\Delta w_0=ce^{u_0}e^{w_0}-4\pi\bar{N}+4\pi\sum_{i=1}^{I}\alpha_i\delta_{p_{k_i}},$$ with $\int_M w_0dM=0$, $1\leq \alpha_i\leq N_{k_i}$, $N_{k_i}$ is the multiplicity of the vortex $p_{k_i}$.

    \

\item[(3)] If  $\lim_{n\to\infty}\lambda_ne^{\bar{u}_n}=c=0$, we have $w_n\to \tilde{w}$ in  $W^{1,p}(M),\ (1<p<2)$ as $n\to\infty$, where  $\tilde{w}$ is a positive Green function satisfying $$-\Delta
    \tilde{w}=-4\pi\bar{N}+4\pi\sum_{i=1}^{I}\beta_i\delta_{q_i},$$ with $\int_M \tilde{w}dM=0$, $1\leq \beta_i\leq \bar{N}$ and $\sum_{i=1}^I\beta_i=\bar{N}$.
\end{itemize}

\end{thm}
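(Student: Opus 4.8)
The plan is to reformulate \eqref{equ:chern-simon-2} as a mean-field problem with a nonnegative, $L^1$-bounded source, and then separate the three regimes according to the limit of $\mu_n:=\lambda_n e^{\bar u_n}$. First I would record the fundamental identity obtained by integrating \eqref{equ:chern-simon-2} over $M$: since $\int_M\Delta u_n\,dM=0$ and $\mathrm{Vol}(M)=1$,
\[
\lambda_n\int_M e^{u_0+u_n}\bigl(1-e^{u_0+u_n}\bigr)\,dM=4\pi\bar N .
\]
Applying the maximum principle to $v_n=u_0+u_n$ (at an interior maximum away from the vortices $\Delta v_n\le0$ forces $e^{v_n}\le1$) gives $0<e^{u_0+u_n}\le1$, so the integrand is nonnegative. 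Setting $g_n:=\lambda_n e^{u_0+u_n}(1-e^{u_0+u_n})\ge0$, the function $w_n$ (which has the same Laplacian as $u_n$) solves
\[
-\Delta w_n=g_n-4\pi\bar N,\qquad \int_M g_n\,dM=4\pi\bar N,\qquad \int_M w_n\,dM=0 .
\]
Since $\{g_n\}$ is bounded in $L^1$ and $w_n$ has zero mean, standard $L^1$-elliptic (Brezis--Merle type) estimates yield a uniform $W^{1,p}(M)$ bound for $1<p<2$; passing to a subsequence, $w_n\rightharpoonup w_*$ weakly in $W^{1,p}$ and $g_n\,dM\rightharpoonup\nu$ weakly-$*$ as measures, with $\nu\ge0$ and $\nu(M)=4\pi\bar N$. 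Moreover, since $\lambda_n\to\infty$, the identity forces $e^{v_n}(1-e^{v_n})\to0$ in $L^1$, so $e^{v_n}\to0$ or $1$ almost everywhere.

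The next step is to analyze the fine structure of $\nu$ through a local blow-up argument, using the key rewriting $g_n=\mu_n e^{u_0+w_n}\bigl(1-e^{\bar u_n}e^{u_0+w_n}\bigr)$. The Brezis--Merle alternative splits $M$ into a locally bounded part, where $w_n$ stays bounded and the limiting measure is absolutely continuous, and a finite concentration set where bubbles form. The three cases are then separated by the behaviour of $\mu_n$ (after passing to a subsequence one of $\mu_n\to+\infty$, $\mu_n\to c>0$, $\mu_n\to0$ must hold). When $\mu_n\to+\infty$ one shows $\bar u_n\to0$ and $e^{v_n}\to1$ almost everywhere, so $g_n\to4\pi\sum_j N_j\delta_{p_j}$ and $u_n\to-u_0$ in $W^{1,p}$, the mass concentrating at the vortices with full multiplicity. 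When $\mu_n\to c>0$, note $e^{\bar u_n}=\mu_n/\lambda_n\to0$, so the quadratic term $e^{\bar u_n}e^{2(u_0+w_n)}$ is negligible wherever $w_n$ is bounded, and the limit satisfies $-\Delta w_0=c\,e^{u_0+w_0}-4\pi\bar N+4\pi\sum_i\alpha_i\delta_{p_{k_i}}$. When $\mu_n\to0$, the absolutely continuous part of $\nu$ vanishes, so the whole mass $4\pi\bar N$ concentrates, producing $-\Delta\tilde w=-4\pi\bar N+4\pi\sum_i\beta_i\delta_{q_i}$ with $\sum_i\beta_i=\bar N$.

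The location and quantization of the atoms in case $(2)$ are pinned down by a local integrability argument. Near a concentration point $p=p_j$ one has $u_0\sim 2N_j\log|x-p|$, while the atom $4\pi\alpha_i\delta_p$ forces $w_0\sim-2\alpha_i\log|x-p|$, so that $e^{u_0+w_0}\sim|x-p|^{2(N_j-\alpha_i)}$; this lies in $L^1_{loc}(\R^2)$ if and only if $\alpha_i\le N_j$, while positivity of the bubble mass gives $\alpha_i\ge1$. At a regular point ($N_j=0$) integrability would require $\alpha_i<1$, which is incompatible with $\alpha_i\ge1$, so in case $(2)$ the atoms must sit exactly at the vortices with $1\le\alpha_i\le N_{k_i}$. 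In case $(3)$, with no absolutely continuous part there is no such integrability constraint, the atoms $q_i$ are unconstrained with $1\le\beta_i\le\bar N$, and $\sum_i\beta_i=\bar N$, yielding the Green function $\tilde w$.

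The main obstacle is the fine blow-up analysis underlying the second and third paragraphs: precisely quantizing the atoms of $\nu$ by rescaling around each concentration point, identifying the limiting (possibly singular) Liouville bubble, and extracting the local mass via a Pohozaev identity. This is complicated by the extra quadratic nonlinearity $\lambda_n e^{2v_n}$, which must be shown negligible in the concentration analysis of cases $(2)$--$(3)$ yet is exactly what drives the convergence $u_n\to-u_0$ in case $(1)$; controlling the coupled behaviour of $\bar u_n$ and the concentration masses is the crux of the argument.
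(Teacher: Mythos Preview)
The paper does not prove Theorem~\ref{thm:1}; it is quoted verbatim from the references \cite{Lan-Li,Choe-Kim} and used as background input for the rest of the analysis. So there is no ``paper's own proof'' to compare against, and your sketch is really an attempt to reconstruct the argument from those sources.

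That said, your outline is the correct framework: rewrite the equation as $-\Delta w_n=g_n-4\pi\bar N$ with $g_n\ge0$ of fixed total mass $4\pi\bar N$, get uniform $W^{1,p}$ bounds from $L^1$ elliptic theory, extract a weak limit and a limiting measure $\nu$, then trichotomize on $\mu_n=\lambda_n e^{\bar u_n}$. This is exactly the Brezis--Merle/concentration--compactness skeleton used in \cite{Lan-Li,Choe-Kim}.

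There is one genuine inaccuracy in your third paragraph. The local integrability criterion for $e^{u_0+w_0}\sim|x-p|^{2(N_j-\alpha_i)}$ in $\R^2$ is $2(N_j-\alpha_i)>-2$, i.e.\ $\alpha_i<N_j+1$, not $\alpha_i\le N_j$. So integrability alone does not deliver the sharp upper bound $\alpha_i\le N_{k_i}$ claimed in the statement; in \cite{Choe-Kim} that bound comes from a more careful analysis of the limiting singular profile (essentially comparing the local mass of $g_n$ near $p_{k_i}$ with the contribution of the vortex singularity in $u_0$, using that $v_n\le0$). Similarly, the lower bound $\alpha_i\ge1$ is not just ``positivity of the bubble mass'': one needs a minimum-mass lemma at a genuine blow-up point, and at a vortex the relevant Pohozaev/classification input is nonstandard because of the weight $|x|^{2N_j}$. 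Your sketch flags the right mechanism but understates the work needed at this step.

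Finally, in case~(1) your claim ``$\bar u_n\to0$'' is not quite what is proved (indeed $\bar u_n\to-\infty$ is possible while $\lambda_n e^{\bar u_n}\to\infty$); what one actually shows is that $e^{v_n}\to1$ in measure, hence $v_n\to0$ and $u_n\to-u_0$ in $W^{1,p}$, with the mass of $g_n$ concentrating exactly at the vortices.
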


\

Recently, Lin-Yan \cite{Lin-Yan,Lin-Yan-1} studied doubly periodic solutions $u_n$ for \eqref{equ:chern-simon-2} in $\Omega$ with the boundary condition
\begin{equation}\label{equ:21}
u_n \mbox{  is doubly periodic on }\partial \Omega.
\end{equation}
Similar to classical theory of Liouville equations, they \cite{Lin-Yan,Lin-Yan-1} proved that if the blow-up point is not a vortex point, then there is only one bubble, i.e. it must be simple. Furthermore,  some interesting bubbling solutions were also constructed in their papers.

\

In this paper, we continue to study the blow-up picture of \eqref{equ:chern-simon-2} by focusing on following two issues:
\begin{itemize}
\item[(1)] If the blow-up point is not a vortex point, the boundary condition \eqref{equ:21} can be changed to a general one, e.g. $osc_{\partial \Omega}u_n\leq C$, which is more nature from the blow-up theory \cite{Brezis,Choe-Kim,Lan-Li}?

\

\item[(2)] If the blow-up point is a vortex point, can we get a energy identity, i.e. the local mass is the sum of  bubbles? Furthermore, if the simple blow-up property still hold at a vortex blow-up point?
\end{itemize}

\

Integrating \eqref{equ:chern-simon-2}, we get
\begin{equation}\label{ineq:14}
\int_M \lambda_ne^{u_0} e^{u_n}(1-e^{u_0} e^{u_n})dM=4\pi \bar{N}.\end{equation}
By a standard maximal argument in \cite{Caffarelli-Yang}, it is easy to see  that $v_n=u_0+u_n\leq 0$. Furthermore, if $\lim\lambda_ne^{\bar{u}_n}=c<\infty$, by Proposition 4.1 of \cite{Choe-Kim}, we get that $$v_n(x)\leq -C$$ for some positive constant $C>0$, which immediately implies that
\begin{equation}\label{ineq:15}
\int_M \lambda_ne^{u_0} e^{u_n}dM\leq \frac{4\pi \bar{N}}{1-e^{-C}}.
\end{equation}

Under the assumptions of Theorem \ref{thm:1} and \eqref{ineq:14}-\eqref{ineq:15}, at each blow-up point $q$ (i.e. there exists $x_n\to q$ such that $w_n(x_n)\to +\infty$), after a suitable rescaling argument, Lan-Li \cite{Lan-Li} proved that there is a bubble $\varphi(x)$ splitting off, which satisfies the equation
\begin{align}\label{equ:07}
\begin{cases}
-\Delta\varphi(x)=e^{\varphi(x)}\left(1-\delta e^{\varphi(x)}\right),\ \ x\in\R^2,\\
\delta e^{\varphi(x)}\leq 1, \ \int_{\R^2}e^{\varphi(x)}\left(1-\delta e^{\varphi(x)}\right)dx +\int_{\R^2}e^{\varphi(x)}dx\leq C,
\end{cases}
\end{align} where $\delta\geq 0$ is a constant, or satisfy the equation
\begin{align}\label{equ:04}
\begin{cases}
-\Delta\varphi(x)=|x-x_0|^{2N_j}e^{\varphi(x)}\left(1-\delta |x-x_0|^{2N_j}e^{\varphi(x)}\right),\ \ x\in\R^2,\\
\delta |x-x_0|^{2N_j}e^{\varphi(x)}\leq 1,\\
 \int_{\R^2}|x-x_0|^{2N_j}e^{\varphi(x)}\left(1-\delta |x-x_0|^{2N_j}e^{\varphi(x)}\right)dx +\int_{\R^2}|x-x_0|^{2N_j}e^{\varphi(x)}dx\leq C;
 \end{cases}
\end{align}where $\delta\geq 0$ is a constant (maybe different from the previous one), $x_0\in\R^2$.

\

In this paper, near a blow-up point $q$, if there exist $x_n\to q$ and $\mu_n\to 0$ such that
\begin{equation}\label{equ:17}
\varphi_n(x):=w_n(x_n+\mu_nx)-w_n(x_n)\to \varphi(x),\ \ in\ \ C^2_{loc}(\R^2),\end{equation} where $\varphi(x)$ satisfies \eqref{equ:04} or \eqref{equ:07}, then we say $\varphi(x)$ or $(x_n,\mu_n)$ is a bubble. Specially, we say $\varphi(x)$ or $(x_n,\mu_n)$  a singular bubble if $\varphi(x)$ satisfies \eqref{equ:04}.

\

Denote $$\alpha_n(s_n;x_n):= \frac{1}{2\pi}\int_{B_{s_n}(x_n)}\lambda_ne^{\bar{u}_n}e^{u_0(x)}e^{w_n}[1-e^{\bar{u}_n}e^{u_0(x)}e^{w_n}]dx.$$

Our first main result is

\begin{thm}\label{thm:main}
Under the assumptions and notations of Theorem \ref{thm:1}, assuming $c=\lim_{n\to\infty}\lambda_ne^{\bar{u}_n}<+\infty$, we have:
\begin{itemize}
\item[(1)] if $c>0$, then $I=0$, that is $w_n\to w_0$ converges strongly in $C^2(M)$.

\

\item[(2)] if $c=0$ and  $w_n$ (or $u_n$) blows up at a  point $q_i$ which is not a vortex point, then there is only one bubble $\varphi(x)$ satisfying \eqref{equ:07}, i.e. the simple blow-up property holds. Moreover, we have following energy identity that $$\lim_{\rho\to 0}\lim_{n\to\infty}\alpha_n(\rho; q_i)=b,$$ where $$b=\frac{1}{2\pi}\int_{\R^2}e^{\varphi(x)}\left(1-\delta e^{\varphi(x)}\right)dx\geq 4.$$

\

\item[(3)] if $c=0$ and  $w_n$ (or $u_n$) blows up at a vortex point $p_i$,  then the following two alternatives hold.
\begin{itemize}
\item[(3-1)] If there is a singular bubble $\varphi(x)$ near the blow-up point $p_i$, then $\varphi(x)$ is the only bubble, i.e. the simple blow-up property holds. Moreover, we have following energy identity $$\lim_{\rho\to 0}\lim_{n\to\infty}\alpha_n(\rho ;p_i)=b,$$ where $$b=\frac{1}{2\pi}\int_{\R^2}|x-x_0|^{2N_i}e^{\varphi(x)}\left(1-\delta |x-x_0|^{2N_i}e^{\varphi(x)}\right)dx \geq 4(1+N_i)$$ and $N_i$ is the multiplicity of vortex point $p_i$.

    \

\item[(3-2)] If there is a non-singular bubble $\varphi^1(x)$ near the blow-up point $p_i$, then there must exist more than two bubbles $\{\varphi^k\}_{k=1}^L$. Moreover $\{\varphi^k\}_{k=1}^L$ contains no singular bubble and satisfies the following energy identity $$\lim_{\rho\to 0}\lim_{n\to\infty}\alpha_n(\rho; p_i)=\sum_{k=1}^Lb_k,$$ where $$b_k=\frac{1}{2\pi}\int_{\R^2}e^{\varphi^k(x)}\left(1-\delta e^{\varphi^k(x)}\right)dx\geq 4.$$
 Furthermore, energies of these bubbles satisfy the following relation that
 \begin{equation}\label{equ:09}
 \sum_{k=1}^Lb_k^2=\left(\sum_{k=1}^Lb_k\right)^2-4N_i\sum_{k=1}^Lb_k.
 \end{equation}
\end{itemize}
\end{itemize}
\end{thm}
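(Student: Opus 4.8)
The plan is to prove Theorem \ref{thm:main} by analyzing the blow-up behavior through a Pohozaev-type identity combined with the classification of bubbles from \eqref{equ:07} and \eqref{equ:04}. First I would establish the \emph{local mass quantization} at each blow-up point by rescaling. Setting $\varphi_n(x) = w_n(x_n + \mu_n x) - w_n(x_n)$ as in \eqref{equ:17}, I choose $\mu_n$ so that $\lambda_n e^{\bar u_n} e^{u_0(x_n)} e^{w_n(x_n)} \mu_n^2 \to 1$ (with the appropriate $|x-x_0|^{2N_i}$ weight at a vortex), which forces $\varphi_n$ to converge in $C^2_{loc}$ to a solution $\varphi$ of the limiting equation. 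The finite-energy classification of such Liouville-type solutions (which I would invoke from Lan-Li \cite{Lan-Li} and the structure of \eqref{equ:04}, \eqref{equ:07}) pins down the leading-order local mass $b$ or $b_k$, giving the lower bounds $b \geq 4$ in the regular case and $b \geq 4(1+N_i)$ in the singular case.

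The core of the argument for part (3) is the Pohozaev identity on balls $B_{s_n}(p_i)$. I would multiply \eqref{equ:chern-simon-2} (in its rescaled form, incorporating the factor $\lambda_n e^{\bar u_n} e^{u_0}$) by $(x-p_i)\cdot\nabla w_n$ and integrate over $B_{s_n}(p_i)$. The usual Liouville-type system produces a quadratic relation in the local masses; the novelty flagged in the abstract is that the $6^{\mathrm{th}}$-order potential structure $e^v(e^v-1)$ contributes an \emph{additional} term beyond the pure energy integral, namely a contribution from $\int (x-p_i)\cdot\nabla w_n \cdot \lambda_n e^{\bar u_n} e^{2u_0} e^{2 w_n}\,dx$. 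I would show, using the pointwise bound $v_n \leq -C$ from \eqref{ineq:15} and the neck analysis, that this extra term is controlled and vanishes in the limit, or contributes precisely the $-4N_i \sum_k b_k$ coefficient in \eqref{equ:09}. Identifying the vortex weight $|x-p_i|^{2N_i}$ as responsible for the factor $4N_i$ is the conceptual heart: the boundary term in the Pohozaev identity picks up the scaling exponent $N_i$ through the Green's function expansion of $u_0$ near $p_i$.

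The dichotomy in (3-1) versus (3-2) I would establish by a concentration-compactness argument. If a singular bubble \eqref{equ:04} forms, its local mass already consumes at least $4(1+N_i)$, and I would argue via the quadratic constraint and the monotonicity of $\alpha_n$ that no further mass can concentrate, forcing simplicity. Conversely, if a non-singular bubble \eqref{equ:07} forms at the vortex, its mass is only $\geq 4$; the singular weight $|x-p_i|^{2N_i}$ is then ``unused'' and I would show that the Pohozaev balance \eqref{equ:09} cannot be satisfied by a single non-singular bubble (since that would require $b_1^2 = b_1^2 - 4N_i b_1$, impossible for $N_i \geq 1$, $b_1 > 0$), so at least two bubbles must appear, and a parity/sign analysis of \eqref{equ:09} rules out any singular bubble among them.

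The main obstacle, as the authors themselves emphasize, is controlling the additional non-energy term in the Pohozaev identity during the neck region analysis between bubbles. Unlike the standard Liouville equation where $\int e^{w_n}$ decays cleanly in the necks, here the term $\int e^{2 w_n}$ (from the $|\phi|^2|\phi|^2$ coupling) requires sharper decay estimates. I expect the delicate step to be proving that the neck energy $\int_{B_{s_n}\setminus B_{r_n}} \lambda_n e^{\bar u_n} e^{u_0} e^{w_n}(1 - e^{\bar u_n}e^{u_0}e^{w_n})\,dx \to 0$ with enough uniformity that the quadratic identity \eqref{equ:09} closes exactly, rather than with an error term. This is where the ``more detailed and delicate analysis'' of the abstract will be concentrated, and it is what separates this result from the non-vortex case handled by Lin-Yan \cite{Lin-Yan,Lin-Yan-1}.
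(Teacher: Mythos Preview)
Your outline captures the broad strategy---rescaling, bubble classification, Pohozaev identity, neck control---but it misses the structural device that actually drives the dichotomy in part~(3). The paper does not use a single Pohozaev identity centered at $p_i$; it uses \emph{two} different identities (Lemma~\ref{lem:pohozaev}): a \emph{local} one, obtained by multiplying by $(x-x_n^i)\cdot\nabla w_n$ on a ball $B_{s_n}(x_n^i)$ with $|x_n^i|/s_n\to\infty$ (far from the vortex), which yields $\int 2\lambda_n e^{\bar u_n}|x|^{2N}e^\zeta e^{w_n}[1-\tfrac12\cdots]\,dx=\pi\alpha_n^2+o(1)$ with no $N$-term; and a \emph{global} one, obtained by multiplying by $x\cdot\nabla w_n$ on a ball containing the origin, which yields the same integral equal to $\pi\alpha_n^2-4\pi N\alpha_n+o(1)$. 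The $-4N$ correction comes from integrating $x\cdot\nabla|x|^{2N}=2N|x|^{2N}$, not from the $e^{2w_n}$ term you single out. The proof then computes the ``additional'' integral $\int 2\lambda_n e^{\bar u_n}|x|^{2N}e^\zeta e^{w_n}[1-\tfrac12\cdots]\,dx$ over a large ball in two ways: once as a sum over individual bubble disks (each non-singular bubble contributing $\pi b_i^2$ via the local identity, a singular bubble contributing $\pi b_1^2-4\pi N b_1$), and once via the global identity on the large ball. Matching the two computations is what produces \eqref{equ:09}.

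Your treatment of the dichotomy contains a circularity. You propose to rule out singular bubbles in case~(3-2) by ``a parity/sign analysis of \eqref{equ:09}'', but \eqref{equ:09} is only derived \emph{after} one knows all bubbles are non-singular (so that each contributes $\pi b_i^2$ with no $-4\pi N b_i$ correction). The paper instead proves independently (Proposition~\ref{prop:03}) that if a singular bubble is present, it must be the only bubble: since the singular bubble sits at scale comparable to the vortex while all others satisfy $|x_n^j|\gg s_n^j$, summing the local identities over $j\geq 2$ and comparing with the global identity on the enclosing ball gives $\sum_{i,j\geq 2,\,i\neq j}b_ib_j=(2N-b_1)\sum_{j\geq 2}b_j$, impossible since $b_1\geq 4(1+N)>2N$. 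This algebraic contradiction, not ``monotonicity of $\alpha_n$'' or mass exhaustion, is the mechanism. Finally, the neck control you anticipate requires more infrastructure than you indicate: a selection procedure (Proposition~\ref{prop:bubble-tree}) giving the uniform bound $w_n(x)+2\log\operatorname{dist}(x,\Sigma_n)+2N\log|x|+\log(\lambda_n e^{\bar u_n})\leq C$, an oscillation lemma, and a fast-decay lemma (Lemma~\ref{lem:fast-decay}) to show that necks carry $o(1)$ of \emph{both} $\alpha_n$ and the second integral---the latter being the ``additional term'' whose exact value (not mere boundedness) is what closes the quadratic relation.
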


\

To prove Theorem \ref{thm:main}, we first need to extend some lemmas in the blow-up theory of Liouville equations to Chern-Simon-Higgs equation, including a selection of bubbling areas, fast decay lemma and two kinds of Pohozaev type identities. This kind of selection of bubbling areas was firstly used in Li-Shafrir \cite{Li-Shafrir} for Liouville equations where they showed at the centers $\Sigma_n$ of bubbling disks, the singularity is $-2log \ dist(x,\Sigma_n)$, i.e. $w_n(x)+2\log  \ dist(x,\Sigma_n)$ is uniformly bounded from above. However, this process can not be extended easily to Chern-Simon-Higgs equation, since the latter is more complicated due to the different singularities at centers of bubbling disks. We need to introduce a new test function and some classification discussion, see Proposition \ref{prop:bubble-tree}. To prove fast decay lemma, we need to estimate the oscillation of $$w_n(x)+2\log dist (x,\Sigma_n)+2N\log |x|$$ in a annulus containing zero point, where one can see that the term $\log |x|$ will take trouble to us. For Pohozaev type identity, it is well known that for Liouville equation, this identity is a quadratic polynomial with respect to the energy which implies immediately the value of local mass. For Chern-Simon-Higgs equation, on one hand, there are two kinds of  identities, local type together with global type. On the other hand, besides the energy term, there is a additional integration $$\int_{B_{s_n}(x_n)}\lambda_ne^{\bar{u}_n}e^{u_0(x)}e^{w_n}[1-\frac{1}{2}e^{\bar{u}_n}e^{u_0(x)}e^{w_n}]dx$$  in both two kinds of Pohozaev type identities, see Lemma \ref{lem:pohozaev}.  This is a main difficulty in the blow-up analysis of Chern-Simon-Higgs equation. To overcome this obstacle, we will use some ideas in the blow-up theory of harmonic maps \cite{Ding-Tian} by proving that there is no energy loss in neck domains.

Secondly, since the main proof is a induction argument on the  bubbling disks, we need to explore the positional structures of these bubbling areas. This will be solved by using a argument of local and global type Pohozaev identities, where the relation \eqref{equ:09} follows, see Section \ref{sec:lemmas}. We want to remark that in case $(3-1)$ of Theorem \ref{thm:main}, by using this type argument, we can prove the simple blow-up phenomenon. But generally, we can not exclude case $(3-2)$ of Theorem \ref{thm:main}.

\

In the blow-up theory of Liouville type system, there is an important question that if the simple blow-up property holds near a blow-up point, i.e. there is only one bubble at each blow-up point?  This kind result was firstly prove by Li \cite{Li} for Liouville equation. Later, Jost-Lin-Wang \cite{Jost-Lin-Wang} proved it also holds for Toda system. For Liouville equation with quantized singular sources, see  \cite{Bartolucci-Tarantello-1,Bartolucci-Tarantello-2,Bartolucci-Chen-Lin-Tarantello,D'Aprile-Wei,D'Aprile-Wei-Zhang,Wei-Zhang-1,Wei-Zhang-2}.

For Chern-Simon-Higgs equation, by Lin-Yan \cite{Lin-Yan} and above Theorem \ref{thm:main}, we know  the simple blow-up property holds near a non-vortex blow-up point. At a vortex blow-up point, Theorem \ref{thm:main} tells us that either $(3-1)$ or $(3-2)$ happens and generally we can not exclude the second case. But, when the sum of multiplicities $\bar{N}=\sum_{j=1}^JN_j$ is small, we can use identity \eqref{equ:09} in Theorem \ref{thm:main}  to show that there is one bubble near a vortex blow-up point which must be a singular bubble. The following theorem may be useful when one want to construct non-simple bubbling solutions.

\

\begin{thm}\label{thm:main-3}
Denote the blow-up set by $S:=\{q_1,...,q_I\}$. Assume $S$ is not empty. Then we have
\begin{itemize}
\item[(1)] When $\bar{N}\leq 3$, $S$ contains no vortex point.

\

\item[(2)] When $\bar{N}=5$, if $q_1$ is a vortex point, then $S=\{q_1\}$ and its multiplicity $N_1=1$. In this case, the simple blow-up property holds.

    \

\item[(3)] When $\bar{N}=6$, if $S=\{q_1\}$ where $q_1$ is a vortex point with multiplicity $N_1=1$, then the simple blow-up property holds.

    \

\item[(4)] When $\bar{N}=7$, if $S=\{q_1\}$ where $q_1$ is a vortex point, then $N_1=1$ or $N_1=2$ and the simple blow-up property holds.
\end{itemize}
\end{thm}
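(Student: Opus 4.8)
The plan is to derive Theorem~\ref{thm:main-3} from Theorem~\ref{thm:main} as an essentially arithmetic consequence, the only global input being a total-mass identity. Writing $m_i := \lim_{\rho\to 0}\lim_{n\to\infty}\alpha_n(\rho;q_i)$ for the local mass at $q_i\in S$, I first observe that integrating \eqref{equ:chern-simon-2} over $M$ gives $\int_M\lambda_n e^{v_n}(1-e^{v_n})\,dM=4\pi\bar{N}$ with $v_n=u_0+u_n$. Since in the regime $c=0$ the density $\lambda_n e^{v_n}(1-e^{v_n})$ decays to $0$ on compact subsets away from $S$, the energy identities in Theorem~\ref{thm:main}(3) (no loss in neck regions) show that all of this mass concentrates on $S$, so that $\sum_{i=1}^I m_i=2\bar{N}$. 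This is the sole global constraint; everything else comes from the per-point description.

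Next I would distill from Theorem~\ref{thm:main} a single uniform lower bound: a non-vortex blow-up point has $m_i\geq 4$ by part (2), while a vortex blow-up point $p_i$ of multiplicity $N_i$ always satisfies $m_i\geq 4(1+N_i)$. In case (3-1) this is immediate. In case (3-2), $m_i=\sum_{k=1}^L b_k$ with $L\geq 2$, each $b_k\geq 4$, and the quadratic relation \eqref{equ:09} reads $\sum_{k=1}^L b_k^2=m_i^2-4N_i m_i$; since $b_k\geq 4$ forces $b_k^2\geq 4b_k$, summing gives $m_i^2-4N_i m_i=\sum_k b_k^2\geq 4\sum_k b_k=4m_i$, whence $m_i\geq 4(1+N_i)$ once more. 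Thus every vortex blow-up point of multiplicity $N_i$ carries mass at least $4(1+N_i)$, regardless of which alternative holds.

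The counting statements then follow by comparison with $\sum_i m_i=2\bar{N}$. For (1), any vortex in $S$ would give $2\bar{N}\geq m_i\geq 8$, impossible when $\bar{N}\leq 3$. For (2) with $\bar{N}=5$ (total mass $10$): a vortex $q_1$ with $N_1\geq 2$ would need $m_1\geq 12>10$, so $N_1=1$; then $m_1\geq 8$ leaves strictly less than $4$ for any further blow-up point, forcing $S=\{q_1\}$ and $m_1=10$. For (4) with $\bar{N}=7$ (total mass $14$), a single vortex forces $m_1=14\geq 4(1+N_1)$, hence $N_1\leq 2$, i.e.\ $N_1\in\{1,2\}$. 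In (3) and (4) the hypothesis $S=\{q_1\}$ already gives $m_1=2\bar{N}$ directly.

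It remains, in parts (2), (3), (4), to upgrade ``$S=\{q_1\}$, one vortex'' to the simple blow-up property by excluding case (3-2); this is the delicate step where \eqref{equ:09} does the real work, and I expect it to be the crux. Concretely I must show that, for $(\bar{N},N_1)\in\{(5,1),(6,1),(7,1),(7,2)\}$ and $m_1=2\bar{N}$, there are no $L\geq 2$ reals $b_k\geq 4$ with $\sum_k b_k=m_1$ and $\sum_k b_k^2=m_1^2-4N_1 m_1$. Since $b_k\geq 4$ gives $m_1\geq 4L$, only $L\in\{2,3\}$ can occur for these values. For $L=2$ the relation reduces to $b_1 b_2=2N_1 m_1$ with $b_1+b_2=m_1$, so $b_1,b_2$ are roots of $t^2-m_1t+2N_1 m_1$; its discriminant $m_1(m_1-8N_1)$ is negative when $(\bar{N},N_1)=(7,2)$, and for $N_1=1$ the smaller root is $<4$ precisely because $m_1>8$, in each case contradicting $b_k\geq 4$. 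For $L=3$ the admissible value of $\sum_k b_k^2$ is at most $16\cdot 2+(m_1-8)^2$, which is strictly smaller than the required $m_1^2-4N_1 m_1$ for all four parameter values. Hence case (3-2) cannot occur, so case (3-1) holds and the blow-up is simple. The only point requiring care is that these elementary inequalities are tight exactly at the borderline configuration $b_1=b_2=4$, $m_1=8$, which lies strictly below all the masses $m_1=2\bar{N}\geq 10$ arising here.
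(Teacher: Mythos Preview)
Your proof is correct and follows the same overall strategy as the paper: combine the total-mass identity $\sum_i m_i = 2\bar N$ with the per-point lower bound $m_i \geq 4(1+N_i)$ at a vortex for the counting statements, then exclude alternative (3-2) case by case via the quadratic relation \eqref{equ:09}. Your execution is in fact cleaner in two spots. First, you derive the bound $m_i \geq 4(1+N_i)$ in case (3-2) directly from \eqref{equ:09} via $b_k^2 \geq 4b_k$, while the paper simply invokes it. Second, for $L=3$ you use the convexity bound $\sum_k b_k^2 \leq 32 + (m_1-8)^2$ on the simplex $\{b_k\geq 4,\ \sum b_k=m_1\}$ to rule out solutions uniformly, whereas the paper just asserts that ``one can check by computer'' no admissible triples exist. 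Your $L=2$ analysis (roots of $t^2-m_1 t+2N_1 m_1$, with smaller root $<4$ whenever $N_1=1$ and $m_1>8$, and negative discriminant when $(\bar N,N_1)=(7,2)$) is exactly the paper's computation, phrased more uniformly.
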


\

At the end of this part, we want to ask the following:

\

\textbf{Question:} For Chern-Simon-Higgs equation, if the simple blow-up property holds at a vortex blow-up point?

\

The rest of the paper is organized as follows. In Section \ref{sec:lemmas}, we will prove some basic lemmas which will be used in our proof, such as a selection of bubbling areas, oscillation lemma, fast decay lemma and two kinds of Pohozaev type identities. The proof of Theorem \ref{thm:main} and Theorem \ref{thm:main-3} will be given in Section \ref{sec:proof}.

\

\section{Some basic lemmas}\label{sec:lemmas}

\

In this section, we will first divide  Theorem \ref{thm:main} into two local versions, i.e. Theorem \ref{thm:main-1} and Theorem \ref{thm:main-2}. Secondly, we will establish some lemmas which will be used in our proof, including a selection of bubbling areas, oscillation lemma, fast decay lemma and two kinds of Pohozaev type identities. Lastly, we will prove a local energy identity.

\

For simplicity of notations, we assume $0$ is a vortex point with multiplicity $N$. Writing Green function $$u_0(x)=2N\log |x|+\zeta(x)$$ where $\zeta(x)$ is a smooth function near point $0$, then we consider the following system
\begin{align}\label{equ:01}
  -\Delta u_n&=\lambda_n|x|^{2N}e^{\zeta(x)} e^{u_n}(1-|x|^{2N} e^{\zeta(x)} e^{u_n}),\ \ in\ \ B_1(0),
  \end{align}
  where $\lambda_n$ is a sequence of numbers which converges to $+\infty$, and $$ \int_{B_1(0)}\lambda_n|x|^{2N} e^{\zeta(x)}e^{u_n}(1-|x|^{2N} e^{\zeta(x)} e^{u_n})dx+ \int_{B_1(0)}\lambda_n|x|^{2N}  e^{\zeta(x)}e^{u_n}dx\leq \Lambda$$ and $$|x|^{2N} e^{\zeta(x)} e^{u_n}\leq 1,\  \ osc_{\partial B}u_n\leq C.$$

Denote $$\bar{u}_n:=\frac{1}{\pi}\int_{B_1(0)}u_n(x)dx,\ \ w_n(x):=u_n(x)-\bar{u}_n,$$ then we have
\begin{align}\label{equ:02}
  -\Delta w_n&=\lambda_n e^{\bar{u}_n}|x|^{2N} e^{\zeta(x)} e^{w_n}(1-e^{\bar{u}_n} |x|^{2N}e^{\zeta(x)} e^{w_n}),\ \ in\ \ B_1(0),
  \end{align} and
  \begin{equation}\label{equ:06}
  \int_{B_1(0)}\lambda_n|x|^{2N} e^{\zeta(x)}e^{\bar{u}_n+w_n(x)}(1-|x|^{2N} e^{\zeta(x)} e^{\bar{u}_n+w_n(x)})dx+ \int_{B_1(0)}\lambda_n|x|^{2N}  e^{\zeta(x)}e^{\bar{u}_n+w_n(x)}dx\leq \Lambda
   \end{equation} and
   \begin{equation}\label{equ:08}
   |x|^{2N} e^{\zeta(x)} e^{\bar{u}_n+w_n(x)}\leq 1,\  \ osc_{\partial B}w_n\leq C.
   \end{equation}

Assume $0$ is the only blow-up point in $B_1(0)$ for sequence $w_n(x)$, i.e.
\begin{equation}\label{equ:15}
\max_{K\subset\subset \overline{B_1(0)}\setminus \{0\}}|w_n(x)|\leq C,\ \ \max_{ \overline{B_1(0)}}|w_n(x)|\to+\infty.
\end{equation}

\

We will prove the following two theorems of a local version of Theorem \ref{thm:main}.

\

\begin{thm}\label{thm:main-1}
Let $w_n$ be a sequence of solutions of \eqref{equ:02} satisfying \eqref{equ:06}, \eqref{equ:08} and \eqref{equ:15}. Assume $$c=\lim_{n\to\infty}\lambda_ne^{\bar{u}_n}<\infty.$$ If $0$ is a vortex point, i.e. $N>0$, then we have $c=0$ and the following two alternatives hold:
\begin{itemize}
\item[(1)]  if there is a singular bubble $\varphi(x)$ near the blow-up point $0$, then $\varphi(x)$ is the only bubble, i.e. the simple blow-up property holds.  Moreover, we have the following energy identity $$\lim_{\rho\to 0}\lim_{n\to\infty}\alpha_n(\rho;0)=b,$$ where $$b=\frac{1}{2\pi}\int_{\R^2}|x-x_0|^{2N}e^{\varphi(x)}\left(1-\delta |x-x_0|^{2N}e^{\varphi(x)}\right)dx \geq 4(1+N).$$

    \

\item[(2)] if there is a non-singular bubble $\varphi^1(x)$ near the blow-up point $0$, then there must exist more than two bubbles $\{\varphi^i\}_{i=1}^m$ near the blow-up point $0$. Moreover $\{\varphi^i\}_{i=1}^m$ contains no singular bubble and satisfies the following energy identity $$\lim_{\rho\to 0}\lim_{n\to\infty}\alpha_n(\rho;0)=\sum_{i=1}^mb_i,$$ where $$b_i=\frac{1}{2\pi}\int_{\R^2}e^{\varphi^i(x)}\left(1-\delta_i e^{\varphi^i(x)}\right)dx\geq 4,\ \ i=1,...,m.$$ Furthermore, energies $b_1,...,b_m$ satisfy the following relation that $$\sum_{i=1}^mb_i^2=\left(\sum_{i=1}^mb_i\right)^2-4N\sum_{i=1}^mb_i.$$
\end{itemize}

\end{thm}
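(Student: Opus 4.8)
The plan is to prove Theorem \ref{thm:main-1} by a bubble-tree induction on the rescaled sequence $w_n$, organized around the selection-of-bubbling-areas procedure (Proposition \ref{prop:bubble-tree}) and the two Pohozaev-type identities (Lemma \ref{lem:pohozaev}). First I would establish $c=0$: if $c=\lim \lambda_n e^{\bar u_n}>0$, then the nonlinearity $\lambda_n e^{\bar u_n}|x|^{2N}e^{\zeta}e^{w_n}(1-\cdots)$ would have a locally bounded coefficient, and standard elliptic estimates combined with the uniform mass bound \eqref{equ:06} would force $w_n$ to stay bounded near $0$, contradicting the blow-up assumption \eqref{equ:15}. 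Hence $c=0$, and the model profile after rescaling solves either \eqref{equ:07} or \eqref{equ:04}.

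Next I would run the bubbling-area selection. Starting from a maximum point $x_n\to 0$ with scale $\mu_n\to0$ given by the blow-up, the rescaling \eqref{equ:17} produces a first bubble. The key dichotomy is whether this bubble sees the singular weight $|x|^{2N}$: if $x_n/\mu_n$ stays bounded, the singularity survives in the limit and we obtain a singular bubble solving \eqref{equ:04}; otherwise the weight converges to a constant and we get a non-singular bubble solving \eqref{equ:07}. For case (1), the heart of the argument is to show that a singular bubble is necessarily the \emph{only} bubble. Here I would use the local Pohozaev identity applied on shrinking balls $B_{s_n}(0)$ centered exactly at the vortex, together with the fast-decay lemma, to show $w_n(x)+2\log|x|^{-1}+2N\log|x|^{-1}$ is bounded above, so that no further concentration can occur in any annulus around $0$; the lower bound $b\geq 4(1+N)$ then comes from the classification of entire solutions of \eqref{equ:04} via the known quantization of mass for singular Liouville-type profiles.

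For case (2), where the first bubble is non-singular, I would argue that a single non-singular bubble cannot account for the full local mass while respecting the singular source at $0$: the global Pohozaev identity on $B_1(0)$ carries the $4N$-factor from the $|x|^{2N}$ weight, and matching it against the sum of local masses of non-singular bubbles forces the quadratic relation $\sum b_i^2=(\sum b_i)^2-4N\sum b_i$. Concretely, I would apply the local Pohozaev identity at each bubble center $x_n^i$ to extract $b_i^2$-type contributions, apply the global identity at $0$ to extract the $(\sum b_i)^2-4N\sum b_i$ combination, and crucially show — by the no-neck-energy argument imported from harmonic-map blow-up theory (Ding--Tian \cite{Ding-Tian}) — that the additional integral $\int \lambda_n e^{\bar u_n}|x|^{2N}e^\zeta e^{w_n}[1-\tfrac12 e^{\bar u_n}|x|^{2N}e^\zeta e^{w_n}]$ appearing in both identities contributes no energy in the neck regions between bubbles. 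Summing the local contributions and equating with the global one yields the stated identity, and in particular $L\geq 2$ since $N>0$ makes $L=1$ inconsistent with the quadratic relation.

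The main obstacle I anticipate is precisely controlling this extra Pohozaev term in the neck domains. In the pure Liouville case the Pohozaev identity is an exact quadratic in the mass, so local masses are read off directly; here the presence of $\int \lambda_n e^{\bar u_n}|x|^{2N}e^\zeta e^{w_n}(1-\tfrac12\cdots)$ means the identity is not closed in $b$ alone. I would handle this by the oscillation lemma to get pointwise decay estimates of $w_n$ on dyadic annuli, then show the extra term over a neck $B_{Rs_n}\setminus B_{s_n/R}$ is $o(1)$ as $R\to\infty$ and $n\to\infty$, analogous to the vanishing of angular energy in the neck for harmonic maps. Establishing this decay rigorously, uniformly across all neck regions of the bubble tree and in the presence of the singular weight $|x|^{2N}$, is where the delicate analysis promised in the introduction will be concentrated.
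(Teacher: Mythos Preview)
Your argument for $c=0$ has a genuine gap. You claim that if $c=\lim\lambda_n e^{\bar u_n}>0$ then the bounded coefficient together with the mass bound \eqref{equ:06} forces $w_n$ to stay bounded near $0$; but this is precisely the Liouville blow-up regime: a bounded coefficient in front of $e^{w_n}$ and an $L^1$ bound on the right-hand side do \emph{not} preclude $w_n\to+\infty$ at a point --- that is exactly the Brezis--Merle situation. The paper's route (Lemma~\ref{lem:02}) is quantitative: a Pohozaev computation on $B_\rho(0)$ gives $\lim_{\rho\to0}\lim_n\alpha_n(\rho;0)\geq 4(1+N)$, while if $c>0$ the weak limit $w_0$ carries a Dirac mass $4\pi\beta\delta_0$ with the structural constraint $\beta\leq N$ coming from Theorem~\ref{thm:1}; then $4(1+N)\leq 2\beta\leq 2N$ is a contradiction. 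You need this argument; the soft boundedness claim does not go through.

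Your plan for case (1) is also too vague to succeed as written. An upper bound of the form $w_n(x)+2(1+N)\log|x|\leq C$ is the content of Proposition~\ref{prop:bubble-tree}(5-1), but by itself it does not rule out further bubbles at centers $x_n^j$ with $|x_n^j|/\mu_n^i\to\infty$. The paper's mechanism (Proposition~\ref{prop:03}) is a contradiction: assuming a second bubble exists, one gathers the singular bubble and its almost-nearest non-singular neighbours into a common disk and compares the local Pohozaev identities at the non-singular centers against the global one containing the vortex; the resulting relation forces $\sum_{i\neq j,\,i,j\geq 2} b_ib_j=(2N-b_1)\sum_{i\geq 2}b_i$, impossible since $b_1\geq 4(1+N)>2N$. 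Your outline for case (2) is closer to the paper's approach and you correctly isolate the neck-energy term as the crux, but to make the induction run you also need the positional constraint $|x_n^i|/\tau_n^i\leq C$ for every non-singular bubble (Proposition~\ref{prop:02}), which is what permits the global Pohozaev identity to be applied at each stage.
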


\

When $0$ is not a vortex point, i.e. $N=0$, we have
\begin{thm}\label{thm:main-2}
Let $w_n$ be a sequence of solutions of \eqref{equ:02} satisfying \eqref{equ:06}, \eqref{equ:08} and \eqref{equ:15}. If $0$ is not a vortex point, i.e.  $N=0$, then $c=0$ and  there is only one bubble $\varphi(x)$ near the blow-up point $0$, satisfying \eqref{equ:07} and following energy identity that $$\lim_{\rho\to 0}\lim_{n\to\infty}\alpha_n(\rho;0)=b,$$ where $$b=\frac{1}{2\pi}\int_{\R^2}e^{\varphi(x)}\left(1-\delta e^{\varphi(x)}\right)dx\geq 4.$$
\end{thm}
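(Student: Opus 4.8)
The plan is to adapt the simple–blow-up strategy for mean field / Liouville equations to the Chern--Simons--Higgs nonlinearity and then read off the single bubble from the quadratic Pohozaev relation \eqref{equ:09}. First I would extract the first bubble. By \eqref{equ:15} there is $x_n\to 0$ with $M_n:=w_n(x_n)=\max_{\overline{B_1(0)}}w_n\to+\infty$; set the scale $\mu_n>0$ by $\mu_n^2\,\lambda_n e^{\bar{u}_n}e^{\zeta(x_n)}e^{M_n}=1$ and put $\delta_n:=e^{\bar{u}_n}e^{\zeta(x_n)}e^{M_n}$. Since $\delta_n=e^{v_n(x_n)}\le 1$ by \eqref{equ:08}, along a subsequence $\delta_n\to\delta\in[0,1]$. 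As $\varphi_n(x):=w_n(x_n+\mu_n x)-M_n\le 0$ and $\zeta$ is smooth, the rescaled version of \eqref{equ:02} has locally bounded right-hand side, so elliptic estimates give $\varphi_n\to\varphi$ in $C^2_{loc}(\R^2)$ with $\varphi$ solving \eqref{equ:07}; because $N=0$ removes the singular weight $|x|^{2N}$, no equation of type \eqref{equ:04} can occur here. The classification and quantization for \eqref{equ:07} then give $b=\frac{1}{2\pi}\int_{\R^2}e^{\varphi}(1-\delta e^{\varphi})\,dx\ge 4$.

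To see that $c=0$, I would argue by contradiction using Theorem \ref{thm:1}: if $0<c<\infty$, then the concentration measure of $\lambda_n e^{\bar{u}_n}e^{u_0}e^{w_n}$ may carry atoms only at vortex points, whereas a blow-up at the non-vortex point $0$ would force an atom of mass at least $8\pi$ there; this contradiction yields $c=0$. Next I would apply the selection of bubbling areas, Proposition \ref{prop:bubble-tree}, to list all bubbles $\{\varphi^i\}_{i=1}^m$ together with their centers $\Sigma_n$ and scales. Again because $N=0$, each $\varphi^i$ solves \eqref{equ:07} for some $\delta_i\in[0,1]$, so $b_i=\frac{1}{2\pi}\int_{\R^2}e^{\varphi^i}(1-\delta_i e^{\varphi^i})\,dx\ge 4$.

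The analytic heart is to show there is no energy loss on the necks separating consecutive bubbling scales. Using the oscillation and fast decay lemmas to control $w_n(x)+2\log\mathrm{dist}(x,\Sigma_n)$ on the annular neck regions, together with the local energy identity proved in Section \ref{sec:lemmas}, I obtain $\lim_{\rho\to 0}\lim_{n\to\infty}\alpha_n(\rho;0)=\sum_{i=1}^m b_i$. Then I would run the local and global Pohozaev identities of Lemma \ref{lem:pohozaev} on nested annuli exhausting the bubbling region; comparing them produces the quadratic relation \eqref{equ:09}, which for $N=0$ reads $\sum_{i=1}^m b_i^2=\left(\sum_{i=1}^m b_i\right)^2$. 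Since every $b_i\ge 4>0$, we have $\left(\sum_i b_i\right)^2=\sum_i b_i^2+\sum_{i\ne j}b_i b_j>\sum_i b_i^2$ as soon as $m\ge 2$; hence $m=1$. This gives the single non-singular bubble and the identity $\lim_{\rho\to 0}\lim_{n\to\infty}\alpha_n(\rho;0)=b\ge 4$.

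The main obstacle is precisely the no-neck-energy step and the treatment of the extra Pohozaev term. Unlike the pure Liouville case, the Chern--Simons--Higgs Pohozaev identity of Lemma \ref{lem:pohozaev} carries the additional integral $\int\lambda_n e^{\bar{u}_n}e^{\zeta}e^{w_n}[1-\frac{1}{2}e^{\bar{u}_n}e^{\zeta}e^{w_n}]\,dx$, and one must prove that this term, like the energy, concentrates only on the bubble scales and leaks nothing into the necks. I expect this to rest on the fast-decay estimate for $w_n(x)+2\log\mathrm{dist}(x,\Sigma_n)$ combined with the no-neck-energy argument imported from harmonic-map blow-up theory; once it is in place, the quadratic relation closes the argument and forces simple blow-up.
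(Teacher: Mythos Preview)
Your proposal is correct and follows essentially the same route as the paper: extract bubbles via Proposition~\ref{prop:bubble-tree}, use the fast-decay/oscillation lemmas and Proposition~\ref{prop:01} to kill neck energy and obtain $\lim_{\rho\to 0}\lim_{n\to\infty}\alpha_n(\rho;0)=\sum_i b_i$, then read off from Lemma~\ref{lem:pohozaev} (which for $N=0$ makes the local and global identities coincide) the relation $\sum_i b_i^2=(\sum_i b_i)^2$, forcing $m=1$. The paper packages the neck analysis and the quadratic relation inside the proof of Theorem~\ref{thm:main-1} and then specializes to $N=0$, while you spell these steps out directly; the substance is the same, and your argument for $c=0$ via the atom-location constraint from Theorem~\ref{thm:1} is exactly Lemma~\ref{lem:02} with $N=0$.
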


\

Next, we establish some lemmas which will be used in our later proof. We start by proving the following lemma for a lower bound of energy concentration.

\begin{lem}\label{lem:02}
Let $w_n$ be a sequence of solutions of \eqref{equ:02} satisfying \eqref{equ:06}, \eqref{equ:08} and \eqref{equ:15}. Then we have  $c=0$ and $$\lim_{\rho\to 0}\lim_{n\to\infty}\alpha_n(\rho;0)\geq 4(1+N).$$
\end{lem}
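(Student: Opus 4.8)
The plan is to reduce everything to the Pohozaev identity for \eqref{equ:02} on a small ball $B_\rho(0)$ and to exploit that the extra quadratic term enters with a favorable sign. Throughout write $V_n := e^{\bar u_n}|x|^{2N}e^{\zeta(x)}e^{w_n}$, so that $0\le V_n\le 1$ by \eqref{equ:08}, the equation reads $-\Delta w_n = \lambda_n V_n(1-V_n)$, and $\alpha_n(\rho;0) = \frac{1}{2\pi}\int_{B_\rho}\lambda_n V_n(1-V_n)\,dx = -\frac{1}{2\pi}\int_{\partial B_\rho}\partial_r w_n\,ds$ by the divergence theorem. Denote the local mass by $\sigma := \lim_{\rho\to0}\lim_{n\to\infty}\alpha_n(\rho;0)$. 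Since $c<\infty$ and $\lambda_n\to+\infty$ we have $e^{\bar u_n}\to 0$, so $V_n\to 0$ uniformly on compact subsets of $\overline{B_1}\setminus\{0\}$ by \eqref{equ:15}; in particular all the concentration sits at $0$. A standard elliptic/Brezis--Merle argument then gives $\sigma>0$: if the concentrated mass vanished, the forcing $\lambda_n V_n(1-V_n)\ge 0$ would have arbitrarily small $L^1$-norm on a fixed $B_\rho$ while $w_n$ stays bounded on $\partial B_\rho$, forcing $w_n$ to be bounded on $B_{\rho/2}$ and contradicting \eqref{equ:15}. Together with the Green representation and the oscillation control this also yields the asymptotics $w_n(x) = -\sigma\log|x| + O(1)$ near $0$, so that on $\partial B_\rho$ one has $\partial_r w_n = -\sigma/\rho + o(1/\rho)$ and the tangential derivative is $o(1/\rho)$.

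Next I would apply the Pohozaev identity, that is, multiply $-\Delta w_n = \lambda_n V_n(1-V_n)$ by $x\cdot\nabla w_n$ and integrate over $B_\rho$ (this is the content of Lemma \ref{lem:pohozaev}). Using $x\cdot\nabla w_n = \frac{x\cdot\nabla V_n}{V_n} - 2N - x\cdot\nabla\zeta$ and the identity $(1-V_n)\,x\cdot\nabla V_n = x\cdot\nabla\!\left(V_n-\tfrac12 V_n^2\right)$, one integration by parts converts the right-hand side into a boundary term $\rho\int_{\partial B_\rho}\lambda_n(V_n-\tfrac12 V_n^2)$, plus the interior terms $-2\int_{B_\rho}\lambda_n(V_n-\tfrac12 V_n^2)$ and $-2N\int_{B_\rho}\lambda_n V_n(1-V_n)$, plus a remainder bounded by $\int_{B_\rho}\lambda_n V_n(1-V_n)\,|x\cdot\nabla\zeta| = O(\rho)$. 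The left-hand side equals $-\rho\int_{\partial B_\rho}(\partial_r w_n)^2 + \tfrac\rho2\int_{\partial B_\rho}|\nabla w_n|^2$, which by the asymptotics tends to $-\pi\sigma^2$ as $n\to\infty$ then $\rho\to0$. On the right, the boundary term is $O(\rho^2\sup_{\partial B_\rho}\lambda_n V_n) = O(\rho^2)\to0$, since $\lambda_n V_n$ stays bounded on the fixed circle $\partial B_\rho$.

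Setting $\gamma := \lim_{\rho\to0}\lim_{n\to\infty}\tfrac{1}{2\pi}\int_{B_\rho}\lambda_n V_n^2\,dx\ge0$ and using $V_n(1-\tfrac12V_n)=V_n(1-V_n)+\tfrac12V_n^2$, the two interior terms converge to $-4\pi\sigma - 2\pi\gamma - 4\pi N\sigma$. Equating with the left-hand side gives
$$\sigma^2 = 4(1+N)\sigma + 2\gamma,$$
and since $\gamma\ge0$ and $\sigma>0$ this yields $\sigma\ge 4(1+N)$, the asserted lower bound. Finally, to prove $c=0$, suppose $c>0$. Then away from $0$ the right-hand side of \eqref{equ:02} converges in $C^0_{loc}(\overline{B_1}\setminus\{0\})$ to $c|x|^{2N}e^{\zeta}e^{w_\infty}$, so $w_n\to w_\infty$ in $C^2_{loc}$ with $-\Delta w_\infty = c|x|^{2N}e^\zeta e^{w_\infty}$ in $B_1\setminus\{0\}$ and, by the concentration of mass $2\pi\sigma$ at $0$, $w_\infty = -\sigma\log|x| + O(1)$ near $0$. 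Then $c|x|^{2N}e^\zeta e^{w_\infty}\sim c|x|^{2N-\sigma}$ near $0$, and Fatou applied to $\int_{B_1}\lambda_n V_n\le\Lambda$ forces this to be integrable, i.e. $\sigma<2(N+1)$, contradicting $\sigma\ge4(1+N)$; hence $c=0$.

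I expect the delicate point to be the rigorous boundary asymptotics $w_n=-\sigma\log|x|+O(1)$ needed to evaluate the Pohozaev boundary terms, where the oscillation lemma and the interaction of the vortex weight $|x|^{2N}$ with the concentration must be handled with care. By contrast, the additional quadratic integration is harmless: it appears precisely as $+2\gamma$ with $\gamma\ge0$, which is exactly what turns the Pohozaev relation into a lower bound rather than an equality.
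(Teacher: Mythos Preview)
Your proof is essentially correct and follows the same Pohozaev strategy as the paper: multiply by $x\cdot\nabla w_n$, integrate on $B_\rho$, use the favorable sign of the extra quadratic term to turn the identity into the inequality $\sigma^2\ge 4(1+N)\sigma$, and then rule out $c>0$ by comparing this with an upper bound on $\sigma$. The paper obtains that upper bound by citing Theorem~\ref{thm:1} (which gives $\beta\le N$, i.e.\ $\sigma\le 2N$), whereas you obtain it by Fatou and the integrability of $c|x|^{2N}e^\zeta e^{w_\infty}$ forcing $\sigma<2(N+1)$; this is a nice self-contained variant and yields the same contradiction.

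One point deserves tightening. Your control of the Pohozaev boundary term, ``$O(\rho^2\sup_{\partial B_\rho}\lambda_n V_n)=O(\rho^2)\to 0$ since $\lambda_n V_n$ stays bounded on $\partial B_\rho$,'' is not uniform in $\rho$: for fixed $\rho$ one has $\lambda_n V_n\to c\rho^{2N}e^\zeta e^{w_\infty}\sim c\rho^{2N-\sigma}$, so the boundary term is $O(\rho^{2+2N-\sigma})$, and you do not yet know this exponent is positive. The fix is simply to reorder the argument: first use your Fatou/integrability step to get the a~priori bound $\sigma<2(N+1)$ in the case $c>0$ (this step is independent of Pohozaev), which makes $2+2N-\sigma>0$ and legitimizes the boundary estimate; then Pohozaev gives $\sigma\ge 4(1+N)$, a contradiction, hence $c=0$; and finally, with $c=0$ the boundary term vanishes trivially and Pohozaev yields the claimed lower bound. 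The paper handles this implicitly via the constraint $\beta\le N$ from Theorem~\ref{thm:1}, which plays exactly the same role.
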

\begin{proof}
By Theorem \ref{thm:1}, we know $w_n\to w_0$ weakly in $W^{1,p}(B_1(0))$ and strongly in $C^2_{loc}(B_1(0)\setminus \{0\})$, where $w_0$ satisfies $$-\Delta w_0=ce^{u_0}e^{w_0}+4\pi\beta\delta_0,\ \ in\ \ B_1(0)$$ and $\beta\geq 1$. Moreover, $$\lambda_n e^{\bar{u}_n}|x|^{2N} e^{\zeta(x)} e^{w_n}(1-e^{\bar{u}_n} |x|^{2N}e^{\zeta(x)} e^{w_n})\to ce^{u_0}e^{w_0}+4\pi\beta\delta_0$$ in the sense of measure which implies that $$\alpha_n(\rho;0)=2\beta+o(1),$$ where $\lim_{\rho\to 0}\lim_{n\to\infty}o(1)=0$.

For any $\rho\in (0,\frac{1}{2})$, multiplying \eqref{equ:02} by $x\nabla w_n$ and integrating by parts, we get
\begin{align}\label{equ:22}
\int_{\partial B_{\rho}(0)}r\left(\frac{1}{2}|\nabla w_n|^2-|\frac{\partial w_n}{\partial r}|^2\right)d\theta=& \int_{\partial B_{\rho}(0)}r\lambda_ne^{\bar{u}_n}|x|^{2N}e^{\zeta}e^{w_n}[1-\frac{1}{2}e^{\bar{u}_n}|x|^{2N}e^{\zeta}e^{w_n}]d\theta\notag\\ &- \int_{B_{\rho}(0)}2\lambda_ne^{\bar{u}_n}|x|^{2N}e^{\zeta}e^{w_n}[1-\frac{1}{2}e^{\bar{u}_n}|x|^{2N}e^{\zeta}e^{w_n}]dx\notag\\
& - \int_{B_{\rho}(0)}2N\lambda_ne^{\bar{u}_n}|x|^{2N}e^{\zeta}e^{w_n}[1-e^{\bar{u}_n}|x|^{2N}e^{\zeta}e^{w_n}]dx\notag\\
& - \int_{B_{\rho}(0)}\lambda_ne^{\bar{u}_n}|x|^{2N}( x\cdot \nabla \zeta )e^{\zeta}e^{w_n}[1-e^{\bar{u}_n}|x|^{2N}e^{\zeta}e^{w_n}]dx.
\end{align}
Since we can write $$w_0(x)=-2\beta\log |x|+\phi(x)$$ where $\phi(x)$ is a smooth function, then $$\int_{\partial B_{\rho}(0)}r\left(\frac{1}{2}|\nabla w_n|^2-|\frac{\partial w_n}{\partial r}|^2\right)d\theta=-4\pi\beta^2+o(1)=-\pi\alpha_n^2(\rho;0)+o(1).$$

Noting that $| x\cdot \nabla \zeta|\leq C\rho$ yields $$\lim_{\rho\to 0}\lim_{n\to\infty}\int_{B_{\rho}(0)}\lambda_ne^{\bar{u}_n}|x|^{2N}( x\cdot \nabla \zeta )e^{\zeta}e^{w_n}[1-e^{\bar{u}_n}|x|^{2N}e^{\zeta}e^{w_n}]dx=0,$$ we have
\begin{align*}
-\pi\alpha_n^2(0;\rho) &\leq - \int_{B_{\rho}(0)}2\lambda_ne^{\bar{u}_n}|x|^{2N}e^{\zeta}e^{w_n}[1-\frac{1}{2}e^{\bar{u}_n}|x|^{2N}e^{\zeta}e^{w_n}]dx\\
& \quad - \int_{B_{\rho}(0)}2N\lambda_ne^{\bar{u}_n}|x|^{2N}e^{\zeta}e^{w_n}[1-e^{\bar{u}_n}|x|^{2N}e^{\zeta}e^{w_n}]dx+o(1)\\
& \leq - 2(1+N)\int_{B_{\rho}(0)}\lambda_ne^{\bar{u}_n}|x|^{2N}e^{\zeta}e^{w_n}[1-e^{\bar{u}_n}|x|^{2N}e^{\zeta}e^{w_n}]dx =-4\pi (1+N)\alpha_n(\rho;0)+o(1)
\end{align*} which implies that $$\lim_{\rho\to 0}\lim_{n\to\infty}\alpha_n(\rho;0)\geq 4(1+N).$$

Now, we claim that $c=0$. In fact, if not, by Theorem \ref{thm:1}, we have $\beta\leq N$. Then there holds $$4(1+N)\leq \lim_{\rho\to 0}\lim_{n\to\infty}\alpha_n(\rho;0)= 2\beta\leq 2N.$$ This is a contradiction. Thus, $c=0$.
\end{proof}

\

Next, we prove a proposition for a selection of bubbling areas. Different from Liouville equations \cite{Li-Shafrir} where the singularity near the center $p$ of each bubbling disk is $-2\log|x-p|$, Chern-Simon-Higgs equations will involve two kinds of singularities which is important in our later proof.

\begin{prop}\label{prop:bubble-tree}
Let $w_n$ be a sequence of solutions of \eqref{equ:02} satisfying \eqref{equ:06}, \eqref{equ:08} and \eqref{equ:15}. Then there exist a sequence of finite points $\Sigma_n:=\{x^1_n,x^2_n,...,x^m_n\}$ and a sequence of positive numbers $\beta^1_n,\beta^2_n,...,\beta^m_n$ such that
\begin{itemize}
\item[(1)] $x^i_n\to 0$ and $\beta_n^i\to 0$ as $n\to\infty$, $\beta^i_n\leq\frac{1}{2}dist (x^i_n,\Sigma_n\setminus \{x^i_n\})$, $i=1,...,m$;

\

\item[(2)] $w_n(x^i_n)=\max_{B_{\beta_n^i}(x^i_n)}w_n(x)\to +\infty$ as $n\to\infty$, $i=1,...,m$;

\

\item[(3)] Take
\begin{align*}
\mu_n^i:=
\begin{cases}
\left(\lambda_ne^{\bar{u}_n+w_n(x^i_n)}e^{\zeta(x_n^i)}\right)^{-\frac{1}{2(1+N)}},\ \ &if\ \ |x^i_n|\left(\lambda_ne^{\bar{u}_n+w_n(x^i_n)}\right)^{\frac{1}{2(1+N)}}\leq C;\\
\left(\lambda_ne^{\bar{u}_n+w_n(x^i_n)}|x^i_n|^{2N}e^{\zeta(x_n^i)}\right)^{-\frac{1}{2}},\ \ &if\ \ |x^i_n|\left(\lambda_ne^{\bar{u}_n+w_n(x^i_n)}\right)^{\frac{1}{2(1+N)}}\to \infty.
\end{cases}
\end{align*} Then $\frac{\mu_n^i}{\beta^i_n}\to 0$ as $n\to\infty$, $i=1,...,m$.

\

\item[(4)] In each $B_{\beta_n^i}(x^i_n)$, we define the scaled function  $$\varphi^i_n(x):=w_n(x^i_n+\mu_n^ix)-w_n(x_n^i).$$ Then we have the following two cases:

\begin{itemize}
\item[(4-1)] if $|x^i_n|\left(\lambda_ne^{\bar{u}_n+w_n(x^i_n)}\right)^{\frac{1}{2(1+N)}}\leq C$, then $\varphi^i_n(x)\to \varphi^i(x)$ in $C^2_{loc}(\R^2)$ as $n\to\infty$, where $\varphi^i(x)$ satisfies \eqref{equ:04}.

\item[(4-2)] if $|x^i_n|\left(\lambda_ne^{\bar{u}_n+w_n(x^i_n)}\right)^{\frac{1}{2(1+N)}}\to \infty$, then $\varphi^i_n(x)\to \varphi^i(x)$ in $C^2_{loc}(\R^2)$ as $n\to\infty$, where $\varphi^i(x)$ satisfies \eqref{equ:07}.

\end{itemize}

\item[(5)] There is at most one point $x^i_n\in\Sigma_n$ such that $$|x^i_n|\left(\lambda_ne^{\bar{u}_n+w_n(x^i_n)}\right)^{\frac{1}{2(1+N)}}\leq C.$$     Furthermore, we have the following:
    \begin{itemize}
    \item[(5-1)] if there is one point $x^i_n\in\Sigma_n$ such that $|x^i_n|\left(\lambda_ne^{\bar{u}_n+w_n(x^i_n)}\right)^{\frac{1}{2(1+N)}}\leq C,$ then we have $$0\in B_{\beta_n^i}(x_n^i)\ \ and \ \ \lim\frac{|x_n^j|}{|x_n^i|}=\infty,\ \forall j\neq i.$$ Moreover,  for any positive constant $c_0>0$, there holds  \begin{align*}w_n(x)+2\log dist(x,\Sigma_n)+2N\log |x|+\log\lambda_ne^{\bar{u}_n}\leq C(c_0),\ \ \forall |x-x_n^i|\geq c_0\mu_n^i.\end{align*}
    \item[(5-2)] if for any $x^i_n\in\Sigma_n$, there holds $|x^i_n|\left(\lambda_ne^{\bar{u}_n+w_n(x^i_n)}\right)^{\frac{1}{2(1+N)}}\to\infty,$ then \begin{align*}w_n(x)+2\log dist(x,\Sigma_n)+2N\log |x|+\log\lambda_ne^{\bar{u}_n}\leq C,\  \ \forall \ x\in B_1(0).\end{align*}
    \end{itemize}

\end{itemize}
\end{prop}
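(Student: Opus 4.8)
The plan is to run an inductive bubble-selection scheme adapted to the degenerate weight $|x|^{2N}$, in the spirit of Li--Shafrir but driven by a test function tailored to the Chern--Simons nonlinearity. Write $P_n(x):=e^{\bar{u}_n}|x|^{2N}e^{\zeta(x)}e^{w_n(x)}$, so that \eqref{equ:08} reads $P_n\le1$ and \eqref{equ:02} becomes $-\Delta w_n=\lambda_nP_n(1-P_n)$, the total mass $\frac{1}{2\pi}\int_{B_1}\lambda_nP_n(1-P_n)\,dx\le\frac{\Lambda}{2\pi}$ being controlled by \eqref{equ:06}; by Lemma \ref{lem:02} we already know $c=0$. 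The quantity I would track is
$$\Phi_n^{(k)}(x):=\log\Big(\lambda_nP_n(x)\,\mathrm{dist}(x,\Sigma_n^{(k)})^2\Big),$$
whose point is the identity $\Phi_n^{(k)}(x)=w_n(x)+2N\log|x|+\log(\lambda_ne^{\bar{u}_n})+\zeta(x)+2\log\mathrm{dist}(x,\Sigma_n^{(k)})$, i.e. $\Phi_n^{(k)}$ differs from the combination appearing in (5) only by the bounded smooth term $\zeta$. Since $P_n\le1$ but $\lambda_n\to\infty$, the density $\lambda_nP_n$ is still unbounded, so $\sup_{B_1}\Phi_n^{(k)}$ may diverge, and I would keep selecting centers exactly as long as it does.

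I would begin with $x_n^1$ a maximum point of $w_n$ over $\overline{B_1(0)}$; by \eqref{equ:15}, $w_n(x_n^1)\to+\infty$ and $x_n^1\to0$, and since $w_n$ concentrates there (its boundary oscillation staying bounded) one has $\lambda_nP_n(x_n^1)\to\infty$, hence $\mu_n^1\to0$. Inductively, given $\Sigma_n^{(k)}$, so long as $\sup_{B_1}\Phi_n^{(k)}\to+\infty$ I add a new maximum point $x_n^{k+1}$ of $w_n$ in the region away from the disks already chosen. At each center set $t_n^i:=|x_n^i|(\lambda_ne^{\bar{u}_n+w_n(x_n^i)})^{1/(2(1+N))}$ and choose $\mu_n^i$ by the first formula of (3) when $t_n^i\le C$ and by the second when $t_n^i\to\infty$. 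Rescaling \eqref{equ:02} for $\varphi_n^i(x)=w_n(x_n^i+\mu_n^ix)-w_n(x_n^i)$ produces the prefactor $(\mu_n^i)^{2(1+N)}\lambda_ne^{\bar{u}_n+w_n(x_n^i)}e^{\zeta(x_n^i)}\,|x_n^i/\mu_n^i+x|^{2N}(1+o(1))$. In the first regime $|x_n^i|/\mu_n^i=t_n^i\,e^{\zeta(x_n^i)/(2(1+N))}(1+o(1))$ stays bounded, the scalar factor is normalized to $1$ by the choice of $\mu_n^i$, and along a subsequence $x_n^i/\mu_n^i\to-x_0$, so the prefactor tends to $|x-x_0|^{2N}$ and the limit solves the singular equation \eqref{equ:04}; in the second regime $\mu_n^i/|x_n^i|\to0$ and the whole prefactor flattens to $1$, so the limit solves \eqref{equ:07}. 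Because $x_n^i$ is a maximum of $w_n$ on $B_{\beta_n^i}(x_n^i)$ we have $\varphi_n^i\le0$, so the rescaled right-hand sides are locally bounded and elliptic estimates give the $C^2_{loc}(\R^2)$ convergence of (4); taking $\beta_n^i\le\frac12\mathrm{dist}(x_n^i,\Sigma_n\setminus\{x_n^i\})$ but still $\gg\mu_n^i$ secures (1)--(3). Finiteness of $m$ follows from energy quantization: a singular bubble carries $\alpha$-mass $\ge4(1+N)$ and a regular bubble $\ge4$ (Lemma \ref{lem:02} and the classical Liouville bound for \eqref{equ:07}), the near-disjoint disks preventing double counting while the total stays $\le\Lambda/2\pi$.

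For (5), the ``at most one'' assertion is geometric. If $t_n^i\le C$ then $|x_n^i|/\beta_n^i=(|x_n^i|/\mu_n^i)(\mu_n^i/\beta_n^i)\to0$, so $0\in B_{\beta_n^i}(x_n^i)$; were two indices $i\ne j$ both of this type, the vortex would lie in both disks, forcing $|x_n^i-x_n^j|\le|x_n^i|+|x_n^j|<\beta_n^i+\beta_n^j$, contradicting $|x_n^i-x_n^j|\ge2\max(\beta_n^i,\beta_n^j)$ from (1). The same estimate $|x_n^i|=o(\beta_n^i)$ together with $|x_n^j|\ge|x_n^i-x_n^j|-|x_n^i|\ge2\beta_n^i-o(\beta_n^i)$ yields $|x_n^j|/|x_n^i|\to\infty$, which is (5-1). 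Finally, the displayed upper bounds are read off the termination of the selection: once $\sup_{B_1}\Phi_n^{(k)}\le C$, the identity above rewrites this exactly as $w_n(x)+2\log\mathrm{dist}(x,\Sigma_n)+2N\log|x|+\log(\lambda_ne^{\bar{u}_n})\le C$, valid on all of $B_1$ in case (5-2). In case (5-1) the innermost ball $B_{c_0\mu_n^i}(x_n^i)$ is excised with a constant $C(c_0)$, because there the estimate must instead be read off the profile of the singular bubble \eqref{equ:04}, whose decay (guaranteed by its mass $\ge4(1+N)$) is exactly what keeps the combination bounded above across the core.

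The main obstacle is the design of $\Phi_n$ and the proof that its relevant maximizers are genuine bubble centers of exactly one of the two scaling types. Unlike the Liouville case, where every bubble carries the single singularity $-2\log\mathrm{dist}$, the weight $|x|^{2N}$ degenerates at the vortex, so near $x=0$ the term $2N\log|x|$ competes with $2\log\mathrm{dist}(x,\Sigma_n)$ and no single rescaling can capture both the vortex and a generic concentration point; the dichotomy via the threshold $t_n^i$, producing either \eqref{equ:04} or \eqref{equ:07}, is precisely what resolves this and is the new ingredient beyond Li--Shafrir. The step I expect to cost the most is showing that whenever $\sup_{B_1}\Phi_n^{(k)}\to+\infty$ the maximizer produces a new center with $w_n\to+\infty$ that rescales to a nontrivial bubble well separated from $\Sigma_n^{(k)}$---so that the quantization estimate applies and the induction stops after finitely many steps---since this requires controlling the oscillation of $w_n$ on the necks between consecutive scales, for which a Harnack-type estimate for $-\Delta w_n=\lambda_nP_n(1-P_n)$ under the bounded-mass constraint must first be established.
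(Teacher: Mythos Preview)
Your overall architecture matches the paper: one selects centers by tracking the combination $w_n+2\log\mathrm{dist}(x,\Sigma_n)+2N\log|x|+\log(\lambda_ne^{\bar u_n})$, splits into the two scaling regimes via the threshold $t_n^i$, obtains either \eqref{equ:04} or \eqref{equ:07} as limit, and stops by energy quantization. The uniqueness argument for the singular index in (5) is also the right one.

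There is, however, a genuine gap at the induction step, and it is exactly the place you flag as ``the step I expect to cost the most''. Saying you ``add a new maximum point $x_n^{k+1}$ of $w_n$ in the region away from the disks already chosen'' does not work: that maximum could sit on the boundary of the excised disks, arbitrarily close to $\Sigma_n^{(k)}$, and then you have no ball of comparable radius on which $w_n\le w_n(x_n^{k+1})+C$, so no rescaling is available. The paper resolves this not by a Harnack estimate but by the Li--Shafrir shrinking-ball device, adapted to carry the weight $|x|^{2N}$: starting from a point $y_n$ where your $\Phi_n^{(k)}$ diverges, with $t_n=\tfrac12\,\mathrm{dist}(y_n,\Sigma_n^{(k)})$, one maximizes
\[
\psi_n(x)=w_n(x)+2\log\bigl[t_n-|x-y_n|\bigr]+2N\log|x|+\log(\lambda_ne^{\bar u_n})
\]
over $\overline{B_{t_n}(y_n)}$. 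Because $\psi_n=-\infty$ on $\partial B_{t_n}(y_n)$ the maximizer $p_n$ is interior; setting $s_n=\tfrac12[t_n-|p_n-y_n|]$ one obtains both $\psi_n(p_n)\to+\infty$ and the crucial pointwise bound $w_n(x)\le w_n(p_n)+2N\log(|p_n|/|x|)+2\log 2$ on $B_{s_n}(p_n)$. This is what furnishes a ball whose radius is large compared with the rescaling parameter, so that $\varphi_n\to\varphi$ in $C^2_{loc}$; your $\Phi_n^{(k)}$ alone cannot produce such a ball. The presence of $2N\log|x|$ in $\psi_n$ then forces a case analysis (e.g.\ whether $|p_n|\ge 2s_n$) that has no analogue in the unweighted problem.

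A second point you skip: in the singular regime you must show that the quadratic coefficient $\delta_n=(\mu_n^i)^{2N}e^{\bar u_n+w_n(x_n^i)}e^{\zeta}$ stays bounded, otherwise the limit equation is not \eqref{equ:04}. The paper proves the equivalent statement $\lambda_n^{-N}e^{\bar u_n+w_n(x_n^i)}\le C$ by contradiction: a different rescaling would yield a solution of $-\Delta\varphi=-|x-x_0|^{4N}e^{2\varphi}$ with $\varphi\le0$, $\varphi(0)=0$, impossible by the maximum principle. This step is short but not automatic from $\varphi_n^i\le0$.
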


\begin{proof}
\textbf{Step 1:} Construction of the first blow-up point $x_n^1$ and $\mu_n^1$.

\

Choose $x_n\in B_{\frac{1}{2}}(0)$ such that  $x_n\to 0$ and $$w_n(x_n)=\max_{\bar{B}}w_n(x)\to +\infty.$$ Denote $\gamma_n:=w_n(x_n)$ and $$\varphi_n(x)=w_n(x_n+\mu_nx)-\gamma_n,$$ where $\mu_n$ is a positive number going to zero which will be choosed  later.

A direct computation yields
\begin{align*}
&-\Delta\varphi_n(x)\\&=\mu_n^2\lambda_n e^{\bar{u}_n+\gamma_n}|x_n+\mu_nx|^{2N}e^{\zeta(x_n+\mu_nx)} e^{\varphi_n(x)}
(1-e^{\bar{u}_n+\gamma_n}|x_n+\mu_nx|^{2N} e^{\zeta(x_n+\mu_nx)}e^{\varphi_n(x)})\\
&=\mu_n^{2(1+N)}\lambda_n e^{\bar{u}_n+\gamma_n}|\frac{x_n}{\mu_n}+x|^{2N} e^{\zeta(x_n+\mu_nx)}e^{\varphi_n(x)}
\left(1-\mu_n^{2N}e^{\bar{u}_n+\gamma_n}|\frac{x_n}{\mu_n}+x|^{2N} e^{\zeta(x_n+\mu_nx)}e^{\varphi_n(x)}\right),\ \ in\ \ B_{\frac{1}{2}\mu_n^{-1}}(0).
\end{align*}

\

\textbf{Case 1:} $\lambda_n e^{\bar{u}_n+\gamma_n}|x_n|^{2(1+N)}\leq C$.

\

In this case, we first claim: $$\lambda_n^{-N} e^{\bar{u}_n+\gamma_n}\leq C.$$

\

In fact, if not, passing to a subsequence, there holds $\lim_{n\to\infty}\lambda_n^{-N} e^{\bar{u}_n+\gamma_n}=\infty$. Taking $$\mu_n=\left(\lambda_ne^{2(\bar{u}_n+\gamma_n)}e^{2\zeta(x_n)}\right)^{-\frac{1}{4N+2}},$$ then we have \begin{align*}
\frac{|x_n|}{\mu_n}&=\left(|x_n|^{4N+2}\lambda_ne^{2(\bar{u}_n+\gamma_n)}e^{2\zeta(x_n)}\right)^{\frac{1}{4N+2}}\\
&=\left(|x_n|^{2N+2}\lambda_ne^{\bar{u}_n+\gamma_n} |x_n|^{2N}e^{\bar{u}_n+\gamma_n}e^{2\zeta(x_n)}\right)^{\frac{1}{4N+2}}\leq C \left( |x_n|^{2N}e^{\bar{u}_n+\gamma_n}e^{2\zeta(x_n)}\right)^{\frac{1}{4N+2}}\leq C
\end{align*} where the last inequality follows from the fact that $v_n(x_n)=\bar{u}_n+\gamma_n+u_0(x_n)\leq 0$,  and $$\mu_n^{2(1+N)}\lambda_ne^{\bar{u}_n+\gamma_n}=e^{-\frac{2(N+1)}{2N+1}\zeta(x_n)}\left(\lambda_n^Ne^{-\bar{u}_n-\gamma_n} \right)^{\frac{1}{2N+1}}\to 0,$$ which implies $\mu_n\to 0$, because $\lambda^{-N}_ne^{\bar{u}_n+\gamma_n}\to \infty$.

Passing to a subsequence, we assume $\lim_{n\to\infty}\frac{x_n}{\mu_n}=-x_0$. Since $\varphi_n(x)\leq 0,\ \varphi_n(0)=0$, by standard elliptic estimates, we know $\varphi_n(x)\to \varphi(x),\ in\ C^2_{loc}(\R^2)$ where $\varphi(x)$ satisfies $$-\Delta\varphi(x)=-|x-x_0|^{4N}e^{2\varphi(x)},\ \ in\ \ \R^2,$$ where $\varphi(x)\leq 0$ and $\varphi(0)=0$. This is a contradiction to maximal principle. We proved the claim.

\

Now, we choose $\mu_n^{2(1+N)}\lambda_n e^{\bar{u}_n+\gamma_n}e^{\zeta(x_n)}=1$, i.e. $$\mu_n:=\left(\frac{1}{\lambda_n e^{\bar{u}_n+\gamma_n}e^{\zeta(x_n)}}\right)^{\frac{1}{2(1+N)}}\to 0.$$ It is easy to see that $$\frac{|x_n|}{\mu_n}\leq C$$ and $$\mu_n^{2N}e^{\bar{u}_n+\gamma_n}e^{\zeta(x_n)}\leq C\left(\frac{e^{\bar{u}_n+\gamma_n}}{\lambda_n^N }\right)^{\frac{1}{1+N}}\leq C.$$

W.L.O.G, we assume $$\frac{x_n}{\mu_n}\to -x_0,\ \ \mu_n^{2N}e^{\bar{u}_n+\gamma_n}e^{\psi(x_n)}\to \delta.$$

By a standard elliptic estimate, we know that $$\varphi_n(x)\to \varphi(x)$$ in the sense of $C^2_{loc}(\R^2)$, where
\begin{align*}
-\Delta\varphi(x)=|x-x_0|^{2N}e^{\varphi(x)}\left(1-\delta |x-x_0|^{2N}e^{\varphi(x)}\right),\ \ x\in\R^2,
\end{align*} with $$\delta |x-x_0|^{2N}e^{\varphi(x)}\leq 1$$ and $$\int_{\R^2}|x-x_0|^{2N}e^{\varphi(x)}\left(1-\delta |x-x_0|^{2N}e^{\varphi(x)}\right)dx +\int_{\R^2}|x-x_0|^{2N}e^{\varphi(x)}dx\leq C.$$

By classification result in Lan-Li \cite{Lan-Li}, we know
$$\int_{\R^2}|x-x_0|^{2N}e^{\varphi(x)}\left(1-\delta |x-x_0|^{2N}e^{\varphi(x)}\right)dx =2\pi b,$$ and $$\varphi(x)=-b\log|x|+C+O(|x|^{-\varepsilon}),\ \ |x|\geq 2,$$ where $\varepsilon\in (0,1)$, $b\geq 4(1+N)$ are two constants.

Taking $R_n\to\infty$ such that $$\lim\|\varphi_n(x)-\varphi(x)\|_{L^\infty(B_{2R_n}(0))}=0,$$ which implies that $$w_n(x)+2(1+N)\log|x-x_n|+\log\lambda_ne^{\bar{u}_n}\leq C,\ \ |x-x_n|\leq 2R_n\mu_n.$$ Then, for any $c_0\mu_n\leq |x-x_n|\leq 2R_n\mu_n$ where $c_0>0$ is a positive constant, since $$|x|\leq |x-x_n|+|x_n|\leq (1+\frac{C}{c_0}) |x-x_n|,$$ we get $$w_n(x)+2\log|x-x_n|+2N\log |x|+\log\lambda_ne^{\bar{u}_n}\leq C(c_0).$$

Choosing $x_n^1=x_n$ and $\beta_n^1=R_n\mu_n$, we get the first blow-up point.

\

\textbf{Case 2:} $\lambda_n e^{\bar{u}_n+\gamma_n}|x_n|^{2(1+N)}\to \infty$.

\

In this case, we choose $$\mu_n:=\left( e^{-\gamma_n-\bar{u}_n-\log\lambda_n} |x_n|^{-2N}e^{-\zeta(x_n)}\right)^{\frac{1}{2}},$$ i.e. $\gamma_n=-2\log\mu_n-2N\log |x_n|-\bar{u}_n-\log\lambda_n-\zeta(x_n)$.

It is easy to see that $$\frac{|x_n|}{\mu_n}=\left(\lambda_n e^{\bar{u}_n+\gamma_n}|x_n|^{2(1+N)}e^{\zeta(x_n)}\right)^{\frac{1}{2}}\to\infty.$$

By a standard elliptic estimate, we know that $$\varphi_n(x)\to \varphi(x)$$ in the sense of $C^2_{loc}(\R^2)$, where
\begin{align*}
-\Delta\varphi(x)=e^{\varphi(x)}\left(1-\delta e^{\varphi(x)}\right),\ \ x\in\R^2,
\end{align*} with $$\delta e^{\varphi(x)}\leq 1$$ and $$\int_{\R^2}e^{\varphi(x)}\left(1-\delta |x-x_0|^{2N}e^{\varphi(x)}\right)dx +\int_{\R^2}e^{\varphi(x)}dx\leq C.$$

By classification result in Lan-Li \cite{Lan-Li}, we know
$$\int_{\R^2}e^{\varphi(x)}\left(1-\delta e^{\varphi(x)}\right)dx =2\pi b,$$ and $$\varphi(x)=-b\log|x|+C+O(|x|^{-\varepsilon}),\ \ |x|\geq 2,$$ where $\varepsilon\in (0,1)$, $b\geq 4$ are two constants.

Take $R_n\to \infty$, such that $$R_n\mu_n\to 0,\ \ \frac{|x_n|}{R_n\mu_n}\to\infty,$$ and $$\lim\|\varphi_n(x)-\varphi(x)\|_{L^\infty(B_{2R_n}(0))}=0,$$ which implies  $$w_n(x)+2\log |x-x_n|+2N\log |x_n|+\log\lambda_ne^{\bar{u}_n}\leq C,\ \ |x-x_n|\leq 2R_n\mu_n.$$ This immediately implies
$$w_n(x)+2\log |x-x_n|+2N\log |x|+\log\lambda_ne^{\bar{u}_n}\leq C,\ \ |x-x_n|\leq 2R_n\mu_n,$$ because $$|x|\leq |x-x_n|+|x_n|\leq 2|x_n|.$$

Choosing $x_n^1=x_n$ and $\beta_n^1=R_n\mu_n$, we get the first blow-up point.

\

\textbf{Step 2:} Construction of second blow-up point $x_n^2$ and $\mu_n^2$.

\

If $$w_n(x)+2\log |x-x_n^1|+2N\log |x|+\log\lambda_ne^{\bar{u}_n}\leq C,\ \ \forall \ |x-x_n^1|\geq \beta_n^1,$$ then we have done. Otherwise, we have $$\sup_{\overline{B_1(0)\setminus B_{\beta_n^1}(x_n^1)}}w_n(x)+2\log |x-x_n^1|+2N\log |x|+\log\lambda_ne^{\bar{u}_n}=+\infty.$$ Suppose $y_n$ is a maximal point such that $$w_n(y_n)+2\log |y_n-x_n^1|+2N\log |y_n|+\log\lambda_ne^{\bar{u}_n}=+\infty, \ |y_n-x_n^1|\geq 2R_n\mu_n.$$

Denote $t_n:=\frac{1}{2}|y_n-x_n^1|$ and $$\psi_n(x):=w_n(x)+2\log [t_n-|x-y_n|]+2N\log |x|+\log\lambda_ne^{\bar{u}_n},\ \ x\in B_{t_n}(y_n).$$ Supposing $p_n\in B_{t_n}(y_n)$ such that $\psi_n(p_n)=\sup_{\overline{B_{t_n}(y_n)}}\psi_n(x),$ then $$\psi_n(p_n)\geq \psi_n(y_n)=w_n(y_n)+2\log\frac{1}{2}|y_n-x_n^1|+2N\log |y_n| +\log\lambda_ne^{\bar{u}_n}\to +\infty.$$

Denoting $s_n:=\frac{1}{2}[t_n-|p_n-y_n|]$, then we have
\begin{equation}\label{equ:03}
\psi_n(p_n)=w_n(p_n)+2\log 2s_n+2N\log |p_n| +\log\lambda_ne^{\bar{u}_n}\to +\infty
 \end{equation}
 and for any $x\in B_{s_n}(p_n)$, there holds \begin{align*}
\psi_n(x)&=w_n(x)+2\log [t_n-|x-y_n|]+2N \log |x|+\log\lambda_ne^{\bar{u}_n}\\&\leq \psi_n(p_n)= w_n(p_n)+2\log 2s_n+2N\log |p_n|+\log\lambda_ne^{\bar{u}_n}.\end{align*} Since $$t_n-|x-y_n|\geq s_n,\ \ \forall x\in B_{s_n}(p_n),$$ then we have
\begin{equation}\label{ineq:02}
w_n(x)\leq w_n(p_n)+2N\log\frac{|p_n|}{|x|}+2\log 2,\ \ \forall x\in B_{s_n}(p_n).
\end{equation}

Denote $\tilde{\gamma}_n:=w_n(p_n)$ and $$\varphi_n(x)=w_n(p_n+\tilde{\mu}_nx)-\tilde{\gamma}_n,$$ where $\tilde{\mu}_n$ is a positive number going to zero and satisfying $\frac{s_n}{\tilde{\mu}_n}\to +\infty$, which will be choosed  later.

A direct computation yields
\begin{align*}
-\Delta\varphi_n(x)=\tilde{\mu}_n^{2(1+N)}\lambda_n e^{\bar{u}_n+\tilde{\gamma}_n}|\frac{p_n}{\tilde{\mu}_n}+x|^{2N} e^{\zeta(p_n+\tilde{\mu}_nx)}e^{\varphi_n(x)}
\left(1-\tilde{\mu}_n^{2N}e^{\bar{u}_n+\tilde{\gamma}_n}|\frac{p_n}{\tilde{\mu}_n}+x|^{2N} e^{\zeta(p_n+\tilde{\mu}_nx)}e^{\varphi_n(x)}\right),\ \ in\ \ B_{\frac{s_n}{\tilde{\mu}_n}}(0).
\end{align*}

\

We have to consider the following cases.

\

\textbf{Case 1-1:} If Case 1 happens, we claim first:
\begin{equation}\label{equ:16}
|p_n|e^{\frac{\tilde{\gamma}_n+\bar{u}_n+\log\lambda_n}{2(1+N)}}\to +\infty.
\end{equation}
In fact, since $\frac{|x_n^1|}{\mu_n}\leq C$, we have
\begin{align*}
|p_n|\geq |p_n-x^1_n|-|x_n^1|\geq R_n\mu_n-|x_n^1|\geq \frac{1}{2}R_n\mu_n
\end{align*} and
\begin{align*}
2|s_n|=t_n-|p_n-y_n|=\frac{1}{2}|y_n-x^1_n|-|p_n-y_n|\leq \frac{1}{2}|p_n-x^1_n|\leq \frac{1}{2}|p_n|(1+\frac{|x^1_n|}{|p_n|})\leq |p_n|.
\end{align*} Combing this with the fact \eqref{equ:03} that $$w_n(p_n)+2\log s_n+2N\log |p_n|+\bar{u}_n+\log\lambda_n\to +\infty,$$
we proved \eqref{equ:16}.

We claim secondly:
\begin{equation}\label{ineq:03}
w_n(x)\leq w_n(p_n)+2(1+N)\log 2,\ \ \forall\  x\in B_{s_n}(p_n).
\end{equation} In fact, since $|p_n|\geq 2s_n$ and for any $x\in B_{s_n}(p_n)$, there holds $$\frac{|p_n|}{|x|}\leq \frac{|p_n|}{|p_n|-|x-p_n|}\leq \frac{|p_n|}{|p_n|-s_n}\leq 2,$$ then \eqref{ineq:03} follows immediately from \eqref{ineq:02}.

Now, we take $$\tilde{\mu}_n:=\left( \frac{e^{-\tilde{\gamma}_n-\bar{u}_n-\log\lambda_n}e^{-\zeta(p_n)}}{|p_n|^{2N}} \right)^{\frac{1}{2}},$$ i.e. $$\tilde{\gamma}_n=-2\log\tilde{\mu}_n-2N\log|p_n|-\bar{u}_n-\log\lambda_n-\zeta(p_n).$$ Then \eqref{equ:03} yields $\tilde{\mu}_n\to 0$ and
\begin{align*}
\frac{s_n}{\tilde{\mu}_n}=e^{\frac{1}{2}[\tilde{\gamma}_n+2\log s_n+2N\log |p_n|+\bar{u}_n+\log\lambda_n+\zeta(p_n)]}\to +\infty,
\end{align*} which also implies that $\frac{|p_n|}{\tilde{\mu}_n}\to +\infty$.

By a standard elliptic analysis, we know that $$\varphi_n(x)\to \varphi(x)$$ in the sense of $C^2_{loc}(\R^2)$, where
\begin{align*}
-\Delta\varphi(x)=e^{\varphi(x)}\left(1-\delta e^{\varphi(x)}\right),\ \ x\in\R^2,
\end{align*} with $$\delta e^{\varphi(x)}\leq 1$$ and $$\int_{\R^2}e^{\varphi(x)}\left(1-\delta |x-x_0|^{2N}e^{\varphi(x)}\right)dx +\int_{\R^2}e^{\varphi(x)}dx\leq C.$$

By classification result in Lan-Li \cite{Lan-Li}, we know
$$\int_{\R^2}e^{\varphi(x)}\left(1-\delta e^{\varphi(x)}\right)dx =2\pi b,$$ and $$\varphi(x)=-b\log|x|+C+O(|x|^{-\varepsilon}),\ \ |x|\geq 2,$$ where $\varepsilon\in (0,1)$, $b\geq 4$ are two constants.

Assume $$\varphi(\bar{x})=\max_{\R^2}\varphi(x).$$ Take $R_n\to +\infty$, such that $R_n\tilde{\mu}_n=o(1)s_n$ and $$\|\varphi_n(x)-\varphi(x)\|_{L^\infty(B_{4R_n})}\to 0.$$ Let $q_n\in B_{R_n}(0)$ such that $$\varphi_n(\bar{x}+q_n)=\max_{\overline{B_{R_n}(0)}}\varphi_n(x).$$ Now, setting $$x^2_k:=p_n+\tilde{\mu}_n(\bar{x}+q_n),\ \ \beta_n^2:=R_n\tilde{\mu}_n,$$ one can see that it satisfies all the conditions of proposition.

\

If Case 2 happens, then we have to consider the following two cases:

\

\textbf{Case 2-1:} $\lambda_n e^{\bar{u}_n+\tilde{\gamma}_n}|p_n|^{2(1+N)}\leq C$.

\

Similar to case 1, we first have $$\lambda_n^{-N} e^{\bar{u}_n+\tilde{\gamma}_n}\leq C.$$

In this case, taking $$\tilde{\mu}_n:=\left(\frac{1}{\lambda_n e^{\bar{u}_n+\tilde{\gamma}_n}e^{\zeta(p_n)}}\right)^{\frac{1}{2(1+N)}}\to 0,$$ it is easy to see that $$\frac{|p_n|}{\tilde{\mu}_n}\leq C,\ \ \tilde{\mu}_n^{2N}e^{\bar{u}_n+\tilde{\gamma}_n}\leq C\left(\frac{e^{\bar{u}_n+\tilde{\gamma}_n}}{\lambda_n^N }\right)^{\frac{1}{1+N}}\leq C.$$

Since $$w_n(p_n)+2(1+N)\log |p_n|+\log\lambda_ne^{\bar{u}_n}\leq C,$$ then \eqref{equ:03} yields $$\frac{s_n}{|p_n|}\to \infty$$
and $$\frac{s_n}{\tilde{\mu}_n}=e^{\frac{1}{2(1+N)}\left(w_n(p_n)+2(1+N)\log s_n+\bar{u}_n+\log\lambda_n+\zeta(p_n)\right)}\to\infty.$$

W.L.O.G, we assume $$\frac{p_n}{\tilde{\mu}_n}\to -x_0,\ \ \tilde{\mu}_n^{2N}e^{\bar{u}_n+\tilde{\gamma}_n}e^{\zeta(p_n)}\to \delta.$$

From \eqref{ineq:02}, we see that $$|\Delta \varphi_n(x)|\leq C|\frac{p_n}{\tilde{\mu}_n}+x|^{2N}e^{\varphi_n(x)}\leq C|\frac{p_n}{\tilde{\mu}_n}+x|^{2N}e^{2N\log\frac{p_n}{p_n+\tilde{\mu}_nx}}\leq C\frac{|p_n|^{2N}}{\tilde{\mu}_n^{2N}}\leq C.$$

Then a standard elliptic estimate yields $\varphi_n(x)\to \varphi(x)$ in the sense of $C^2_{loc}(\R^2)$, where $\varphi(x)$ satisfies \eqref{equ:04}. By classification result in Lan-Li \cite{Lan-Li}, we have
 \begin{equation}\label{equ:18}
 \varphi(x)=-b\log|x|+C+O(|x|^{-\varepsilon}),\ \ |x|\geq 2,\end{equation} where $\varepsilon\in (0,1)$, $b\geq 4(N+1)$ are two constants.

Taking $R_n\to\infty$ such that $R_n\tilde{\mu}_n=o(1)s_n$ and $$\lim\|\varphi_n(x)-\varphi(x)\|_{L^\infty(B_{4R_n}(0))}=0,$$ which implies that $$w_n(x)+2(1+N)\log|x-p_n|+\log\lambda_ne^{\bar{u}_n}\leq C,\ \ |x-p_n|\leq 4R_n\tilde{\mu}_n.$$ Then, for any $c_0\tilde{\mu}_n\leq |x-p_n|\leq 4R_n\tilde{\mu}_n$ where $c_0>0$ is a positive constant, since $$|x|\leq |x-p_n|+|p_n|\leq |x-p_n|+C\tilde{\mu}_n\leq (1+\frac{1}{C c_0}) |x-p_n|,$$ we get $$w_n(x)+2\log|x-p_n|+2N\log |x|+\log\lambda_ne^{\bar{u}_n}\leq C.$$

Similarly to Case 1-1, we can take $x^2_n$ and $\beta^2_n$.

\

\textbf{Case 2-2:} $\lambda_n e^{\bar{u}_n+\tilde{\gamma}_n}|p_n|^{2(1+N)}\to\infty$.

\

In this case, we choose $$\tilde{\mu}_n:=\left( e^{-\tilde{\gamma}_n-\bar{u}_n-\log\lambda_n} |p_n|^{-2N}e^{-\zeta(p_n)}\right)^{\frac{1}{2}},$$ i.e. $\tilde{\gamma}_n=-2\log\tilde{\mu}_n-2N\log |p_n|-\bar{u}_n-\log\lambda_n-\zeta(p_n)$.

It is easy to see that $$\frac{|p_n|}{\tilde{\mu}_n}=\left(\lambda_n e^{\bar{u}_n+\tilde{\gamma}_n}|p_n|^{2(1+N)}e^{\zeta(p_n)}\right)^{\frac{1}{2}}\to\infty.$$

W.L.O.G, we may assume $s_n\leq\frac{1}{2}|p_n|$. Otherwise, we take $\tilde{s}_n=\min\{s_n,\frac{1}{2}|p_n|\}$ and consider in $B_{\tilde{s}_n}(p_n)$. Then we have $$\frac{s_n}{\tilde{\mu}_n}=e^{\frac{1}{2}\left( w_n(p_n)+2\log s_n+2N\log |p_n|+\bar{u}_n+\log\lambda_n+\zeta(p_n)\right)}\to\infty,$$ where the last limit follows from \eqref{equ:03}.

Since for any $x\in B_{s_n}(p_n)$, there holds $|x|\geq \frac{1}{2}|p_n|$. By \eqref{ineq:02}, we have $$w_n(x)\leq w_n(p_n)+C,\ \ \forall \ x\in B_{s_n}(p_n).$$

By a standard elliptic estimate, we know that $$\varphi_n(x)\to \varphi(x)$$ in the sense of $C^2_{loc}(\R^2)$, where $\varphi(x)$ satisfies \eqref{equ:07}. By classification result in Lan-Li \cite{Lan-Li}, we know $$\varphi(x)=-b\log|x|+C+O(|x|^{-\varepsilon}),\ \ |x|\geq 2,$$ where $\varepsilon\in (0,1)$, $b\geq 4$ are two constants. Similarly to Case 1-1, we can take $x^2_n$ and $\beta^2_n$.

\

\textbf{Step 3:} By above two steps, we have defined the selection process. Continuously, we consider the function $$w_n(x)+2\log dist(x,\{x_n^1,x_n^2\})+2N\log |x|+\log\lambda_ne^{\bar{u}_n}\leq C,\ \ \forall \ |x-x_n^i|\geq \beta_n^i,\ i=1,2.$$ If it is uniformly bounded, then we stop and conclude the proposition for $m=2$. Otherwise, using the same argument, we get $x_n^3$ and $\beta_n^3$. Since each bubble area contributes a positive energy, this process must stop after finite steps. We proved the proposition.

\end{proof}

\

\begin{defn}
For $x_n^i\in\Sigma_n$, we call $(x_n^i,\mu_n^i)$ a bubble. Specially, we call it a singular bubble if the first case of conclusion $(4)$ in Proposition \ref{prop:bubble-tree} happens.
\end{defn}

\

By using the singularities we got in Proposition \ref{prop:bubble-tree} and a blow-up argument, we prove following the oscillation estimate.
\begin{lem}\label{lem:osc}
Let $w_n$ be a sequence of solutions of \eqref{equ:02} satisfying \eqref{equ:06}, \eqref{equ:08} and \eqref{equ:15}.  Denote $$osc_{\Omega}u:=\sup_{x,y\in\Omega}(u(x)-u(y))$$ and $d_n(x):=dist(x,\Sigma_n)$. Then we have:
 $$osc_{\frac{1}{2}d_n(x)}w_n\leq C,\ \ \forall \ x\in B\setminus \Sigma_n,$$  where $C$ is a constant independent of $x,\ n$.
\end{lem}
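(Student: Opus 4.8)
The plan is to argue by contradiction through a rescaling (blow-up) argument, reducing the statement to a Harnack inequality for the rescaled functions. Suppose the conclusion fails; then along a subsequence there are points $\xi_n\in B\setminus\Sigma_n$ with $osc_{B_{d_n(\xi_n)/2}(\xi_n)}w_n\to+\infty$. Set $r_n:=d_n(\xi_n)=dist(\xi_n,\Sigma_n)$ and rescale by $\hat w_n(z):=w_n(\xi_n+\tfrac{r_n}{2}z)-w_n(\xi_n)$ on $B_{3/2}(0)$ (the enlarged ball $B_{3r_n/4}(\xi_n)$ still avoids $\Sigma_n$). Then $\hat w_n(0)=0$, and since oscillation is invariant under translation and scaling, $osc_{B_1(0)}\hat w_n=osc_{B_{r_n/2}(\xi_n)}w_n\to+\infty$, which is what I must contradict.

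The first step is to control the rescaled nonlinearity using Proposition \ref{prop:bubble-tree}(5). For $z\in B_{3/2}(0)$ and $y:=\xi_n+\tfrac{r_n}{2}z$ one has $dist(y,\Sigma_n)\ge r_n-\tfrac34 r_n=\tfrac14 r_n$, so the pointwise estimate $\lambda_ne^{\bar u_n}|y|^{2N}e^{\zeta(y)}e^{w_n(y)}\le C\,dist(y,\Sigma_n)^{-2}$ gives
\[
-\Delta\hat w_n(z)=\Bigl(\tfrac{r_n}{2}\Bigr)^2\lambda_ne^{\bar u_n}|y|^{2N}e^{\zeta(y)}e^{w_n(y)}\bigl(1-e^{\bar u_n}|y|^{2N}e^{\zeta(y)}e^{w_n(y)}\bigr)=:\hat F_n(z),
\]
with $0\le\hat F_n\le C$ uniformly in $n$, nonnegativity coming from \eqref{equ:08}. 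Two degenerate configurations are isolated first: if $B_{r_n/2}(\xi_n)$ lies deep inside a bubble disc (i.e. $r_n\le c_0\mu_n^{i}$ for the nearest centre), the oscillation is read off directly from the convergence $\varphi^i_n\to\varphi^i$ in $C^2_{loc}(\R^2)$ of Proposition \ref{prop:bubble-tree}(4) and is bounded; and if the vortex $0$ lies in $B_{r_n/2}(\xi_n)$, the degeneracy of $|y|^{2N}$ is absorbed because the bound of Proposition \ref{prop:bubble-tree}(5) already carries the term $2N\log|y|$. In the remaining generic regime $|y|\asymp|\xi_n|$ and $dist(y,\Sigma_n)\asymp r_n$ throughout $B_{3/2}(0)$.

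With $0\le\hat F_n\le C$ in hand, split $\hat w_n=\hat\psi_n+\hat h_n$ on $B_{3/2}(0)$, where $-\Delta\hat\psi_n=\hat F_n$ with $\hat\psi_n=0$ on $\partial B_{3/2}(0)$ and $\hat h_n$ is harmonic; elliptic estimates give $\|\hat\psi_n\|_{C^{1,\gamma}}\le C$, so $\hat\psi_n$ contributes bounded oscillation and $|\hat h_n(0)|\le C$. The equation shows $\hat w_n$ is superharmonic, and the heart of the matter is a uniform upper bound $\hat w_n\le M$ on $B_1(0)$. Writing $\Phi_n(y):=w_n(y)+2\log dist(y,\Sigma_n)+2N\log|y|+\log\lambda_ne^{\bar u_n}$, Proposition \ref{prop:bubble-tree}(5) reads $\Phi_n\le C$, and
\[
\hat w_n(z)=\Phi_n(y)-\Phi_n(\xi_n)-2\log\frac{dist(y,\Sigma_n)}{r_n}-2N\log\frac{|y|}{|\xi_n|},
\]
in which the two logarithmic ratios are bounded on $B_1(0)$. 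Hence the upper bound on $\hat w_n$ is equivalent to a matching lower bound $\Phi_n(\xi_n)\ge -C$, i.e. to $w_n(\xi_n)$ sitting at the background level $-2\log r_n-2N\log|\xi_n|-\log\lambda_ne^{\bar u_n}$. Granting this, $M-\hat h_n\ge 0$ is nonnegative and harmonic, so the classical Harnack inequality gives
\[
\sup_{B_{1/2}}(M-\hat h_n)\le C_H\inf_{B_{1/2}}(M-\hat h_n)\le C_H\bigl(M-\hat h_n(0)\bigr)\le C,
\]
whence $osc_{B_{1/2}}\hat h_n\le C$ and, together with the bounded $\hat\psi_n$, $osc_{B_{1/2}}\hat w_n\le C$, contradicting $osc\to+\infty$.

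The main obstacle is precisely the lower bound $\Phi_n(\xi_n)\ge-C$, equivalently ruling out that $\hat w_n$ drifts to $-\infty$ on part of $B_1(0)$. I expect to extract it from the superharmonic structure: since $\hat w_n(0)=0$ and $\hat w_n$ is superharmonic, the spherical averages $\frac{1}{2\pi r}\int_{\partial B_r(0)}\hat w_n\,ds$ are $\le 0$, so $\hat w_n$ cannot be uniformly large, and the weak Harnack inequality applied to the nonnegative supersolution $\hat w_n-\inf_{B_{3/2}}\hat w_n$, combined with the $L^\infty$-bound on $\hat F_n$, controls $\inf_{B_{1/2}}\hat w_n$ from below by an average that is pinned by $\hat w_n(0)=0$. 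The underlying dichotomy — either $\hat w_n$ stays bounded, in which case Harnack closes the argument, or it tends to $-\infty$ locally, which is excluded because the decay profile $\varphi^i(x)=-b\log|x|+C+O(|x|^{-\varepsilon})$ of the nearest bubble in Proposition \ref{prop:bubble-tree} forces $w_n(\xi_n)$ onto the correct logarithmic level — is where the Chern--Simons-specific bookkeeping of the two terms $2\log dist(x,\Sigma_n)$ and $2N\log|x|$ genuinely enters, and is the delicate point of the proof.
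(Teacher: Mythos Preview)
Your rescaling approach has a genuine gap, and it is precisely at the step you yourself flag as ``the main obstacle''. After rescaling you use only two pieces of information: $\hat w_n(0)=0$ and the pointwise bound $0\le -\Delta\hat w_n=\hat F_n\le C$ coming from Proposition~\ref{prop:bubble-tree}(5). These are \emph{not} enough to bound $osc_{B_1}\hat w_n$: the harmonic functions $\hat w_n(z)=n(z_1^2-z_2^2)$ satisfy both conditions and have oscillation $\to\infty$. Your Harnack argument for $M-\hat h_n$ needs an $n$-independent upper bound $\hat w_n\le M$, which you rewrite as a \emph{lower} bound $\Phi_n(\xi_n)\ge -C$ with $\Phi_n(x)=w_n(x)+2\log d_n(x)+2N\log|x|+\log(\lambda_ne^{\bar u_n})$. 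But this lower bound is simply false in general: the ``fast decay'' regions defined right after this lemma are exactly the sets where $\Phi_n\le -N_n\to -\infty$, and these regions are nonempty (indeed, they are the neck regions between bubbles that the whole paper is about). Your proposed dichotomy---that the nearest bubble's asymptotic $\varphi^i(x)=-b\log|x|+C+O(|x|^{-\varepsilon})$ pins $w_n(\xi_n)$ to the right logarithmic level---only applies when $\xi_n$ lies in the range where $\varphi^i_n\to\varphi^i$ is already known to converge, not in the neck.

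What is missing from your local rescaling is the \emph{global} information: the bound $\int_{B_1}|\Delta w_n|\le C$ from \eqref{equ:06} and the boundary control $osc_{\partial B_1}w_n\le C$ from \eqref{equ:08}. The paper's proof uses exactly these, via the Green representation
\[
w_n(x_1)-w_n(x_2)=\frac{1}{2\pi}\int_{B_1}\log\frac{|x_1-y|}{|x_2-y|}\,\Delta w_n(y)\,dy+O(1),
\]
and then splits $B_1$ into the far region $B_1\setminus B_{\frac34 r_n}(x_0)$, where $\bigl|\log\frac{|x_1-y|}{|x_2-y|}\bigr|\le C$ and the $L^1$ bound on $\Delta w_n$ suffices, and the near region $B_{\frac34 r_n}(x_0)$, where after rescaling the pointwise upper bound from Proposition~\ref{prop:bubble-tree}(5) controls the integrand (with a short case analysis handling the singular-bubble and vortex configurations you also mention). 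No lower bound on $\Phi_n$ is ever needed. If you want to salvage a rescaling proof, you must import this global $L^1$ control into the rescaled picture, which in effect reproduces the Green's-function argument.
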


\begin{proof}
Since $0$ is the only blow-up point in $B_1(0)$ and $osc_{\partial B}w_n\leq C$, by a standard estimate, we know $$osc_{B\setminus B_\delta}w_n\leq C(\delta),$$ for any $\delta\in (0,1)$. Hence we just need to show the lemma in $B_{\frac{1}{20}}$. Let $\phi_n(x)$ be the solution of
\begin{align*}
\begin{cases}
-\Delta \phi_n(x)=0,\ \ &in\ \ B_1(0),\\
\phi_n(x)=w_n(x),\ \ &on\ \ \partial B_1(0).
\end{cases}
\end{align*}
It is clear that $$osc_{B}\phi_n(x)\leq osc_{\partial B}w_n(x) \leq C.$$

Let $\eta_n(x):=w_n(x)-\phi_n(x)$ and let $$G(x,y)=-\frac{1}{2\pi}\log |x-y|+H(x,y)$$ be Green's function on $B_1$ with respect to the Dirichlet boundary, where $H(x,y)$ is a smooth harmonic function. Then $$\eta_n(x)=\int_{B}G(x,y)(-\Delta w_n(y))dy.$$ Let $x_0\in B_{\frac{1}{20}}$ and $r_n:=dist(x_0,\Sigma_n)$. For any $x_1,x_2\in B_{\frac{1}{2}r_n}(x_0)$, then
\begin{align*}
\eta_n(x_1)-\eta_n(x_2)&=\int_{B}\left(G(x_1,y)-G(x_2,y)\right)(-\Delta w_n(y))dy\\
&=\frac{1}{2\pi}\int_{B}\log\frac{|x_1-y|}{|x_2-y|}\Delta w_n(y)dy+O(1).
\end{align*}

We divide the integral into two parts, i.e. $B_1=B_{\frac{3}{4}r_n}(x_0)\cup (B_1\setminus B_{\frac{3}{4}r_n}(x_0))$. Noting that $$\left|\log\frac{|x_1-y|}{|x_2-y|}\right|\leq C,\ \ y\in B_1\setminus B_{\frac{3}{4}r_n}(x_0),$$ we have $$\left|\int_{B_1\setminus B_{\frac{3}{4}r_n}(x_0)}\log\frac{|x_1-y|}{|x_2-y|}\Delta w_n(y)dy \right|\leq C\int_{B_1}|\Delta w_n(y)|dy \leq C.$$

A direct computation yields
\begin{align*}
&\left|\int_{B_1\setminus B_{\frac{3}{4}r_n}(x_0)}\log\frac{|x_1-y|}{|x_2-y|}\Delta w_n(y)dy \right|\\&=\left|\int_{ B_{\frac{3}{4}r_n}(x_0)}\log\frac{|x_1-y|}{|x_2-y|}\lambda_n e^{\bar{u}_n}|y|^{2N}e^{\zeta(y)} e^{w_n(y)}(1-e^{\bar{u}_n} |y|^{2N}e^{\zeta(y)} e^{w_n(y)})dy \right|\\&\leq C\int_{ B_{\frac{3}{4}r_n}(x_0)} \left|\log\frac{|x_1-y|}{|x_2-y|}\right|\lambda_n e^{\bar{u}_n}|y|^{2N} e^{w_n(y)}dy\\
&=C\int_{ B_{\frac{3}{4}}(0)}\left|\log\frac{|x_1-x_0-r_ny|}{|x_2-x_0-r_ny|}\right|\lambda_n e^{\bar{u}_n}|x_0+r_ny|^{2N} e^{w_n(x_0+r_ny)}r_n^2dy\\
&=C\int_{ B_{\frac{3}{4}}(0)}\left|\log\frac{|\frac{x_1-x_0}{r_n}-y|}{|\frac{x_2-x_0}{r_n}-y|}\right|\lambda_n e^{\bar{u}_n}|x_0+r_ny|^{2N} e^{w_n(x_0+r_ny)}r_n^2dy.
\end{align*}

\

Now, we have to consider the following three cases:

\

\textbf{Case 1:} if there is no singular bubble or there is a singular bubble $(x_n^i,\mu_n^i)$ such that $r_n=dist(x_0,\Sigma_n)=|x_0-x_n^i|\geq \mu_n^i$.

\

In this case, we claim:
\begin{equation}\label{ineq:01}
w_n(x_0+r_ny)+2\log r_n+2N\log |x_0+r_ny|+\log\lambda_ne^{\bar{u}_n}\leq C,\ \ \forall \ y\in B_{\frac{3}{4}}(0).
\end{equation}

In fact, if there is no singular bubble, since
\begin{equation}\label{ineq:04}
dist(x_0+r_ny,\Sigma_n)\geq dist(x_0,\Sigma_n)-dist(x_0,x_0+r_ny)\geq \frac{1}{4}r_n,\ \ \forall\ y\in B_{\frac{3}{4}}(0),\end{equation} then \eqref{ineq:01} follows immediately from Proposition \ref{prop:bubble-tree}.

If there is a singular bubble $(x_n^i,\mu_n^i)$, by Proposition \ref{prop:bubble-tree}, \eqref{ineq:04} and the fact that
\begin{equation}\label{equ:05}
dist(x_0+r_ny,x^i_n)\geq dist(x_0+r_ny,\Sigma_n)\geq \frac{1}{4}r_n=\frac{1}{4} dist(x_0,\Sigma_n)\geq \frac{1}{4}\mu_n^i,\ \ \forall\ y\in B_{\frac{3}{4}}(0),\end{equation} we get \eqref{ineq:01}.

Therefore, we get $$\left|\int_{B_1\setminus B_{\frac{3}{4}r_n}(x_0)}\log\frac{|x_1-y|}{|x_2-y|}\Delta w_n(y)dy \right|\leq \int_{ B_{\frac{3}{4}}(0)}\left|\log\frac{|\frac{x_1-x_0}{r_n}-y|}{|\frac{x_2-x_0}{r_n}-y|}\right|dy\leq C,$$ which implies $$|\eta_n(x_1)-\eta_n(x_2)|\leq C.$$ Then the conclusion of the lemma follows.

\

\textbf{Case 2:} if there is a singular bubble $(x_n^i,\mu_n^i)$ such that $r_n=dist(x_0,\Sigma_n)=|x_0-x_n^i|\leq \mu_n^i$.

\

In this case, there must be that $$r_n=dist(x_0,\Sigma_n)=|x_0-x_n^i|.$$  By  Proposition \ref{prop:bubble-tree}, we know $$\varphi_n(x)=w_n(x_n^i+\mu_n^ix)-w_n(x_n^i)\to \varphi(x)$$ in the sense of $C^2_{loc}(\R^2)$, where $\varphi(x)$ satisfies $$|\varphi(x)|\leq C+b\log (1+|x|).$$ It is easy to see that, when  $|x_0-x_n^i|\leq \mu_n^i$, there holds $$osc_{\frac{1}{2}d_n(x_0)}w_n(x)=osc_{B_{\frac{|x_0-x_n^i|}{2}}(x_0)}w_n(x)=osc_{B_{\frac{|x_0-x_n^i|}{2\mu_n^i}}(\frac{x_0-x_n^i}{\mu_n^i})}\varphi_n(x)\leq osc_{B_2(0)}\varphi_n(x)\leq C,$$ which implies immediately the conclusion of the lemma.

\

\textbf{Case 3:} if there is a singular bubble $(x_n^i,\mu_n^i)$ and $r_n=dist(x_0,\Sigma_n)=|x_0-x_n^j|$ where $j\neq i$.

\

Then we  have the two following cases.

$(i)$ $\frac{r_n}{\mu_n^i}\to +\infty$. In this case, it is easy to see that \eqref{equ:05} still holds and then the conclusion of the lemma follows as in Case 1.

$(i)$ $\frac{r_n}{\mu_n^i}\leq C$. In this case, by proposition \ref{prop:bubble-tree}, we know $\frac{|x_n^i-x_n^j|}{\mu_n^i}\to +\infty$, which implies $$dist(x_0+r_ny,x_n^i)\geq |x_0-x_n^i|-\frac{3}{4}r_n\geq |x_n^i-x_n^j|-|x_0-x_n^j|-\frac{3}{4}r_n=|x_n^i-x_n^j|-\frac{7}{4}r_n\geq \mu_n^i.$$ Then the conclusion of the lemma follows as in Case 1.

We proved this lemma.
\end{proof}

\

Before stating the next lemma, we first extend definitions of fast decay and slow decay of \cite{Lin-Wei-Zhang} for Liouville type system to Chern-Simon-Higgs equation.
\begin{defn}
We say $w_n$ has fast decay on $\partial B_{r_n}(x_n)$ (resp. $B_{r_n}(x_n)\setminus B_{s_n}(x_n)$) if $$w_n(x)+2\log dist(x,\Sigma_n)+ 2N\log |x|+\log\lambda_ne^{\bar{u}_n}\leq -N_n,\ \forall x\in \partial B_{r_n}(x_n)\ \ (resp. \ \forall x\in B_{r_n}(x_n)\setminus B_{s_n}(x_n)),$$ for some $N_n\to\infty$ as $n\to\infty$.
\end{defn}

\

Next, we derive a fast decay lemma. For classical Liouville equation or Toda system without singular sources (i.e. $N=0$), it is a direct corollary of oscillation Lemma \ref{lem:osc}, see \cite{Lin-Wei-Zhang}. Here, in our case, we need to deal with the singular term $2N\log |x|$.
\begin{lem}\label{lem:fast-decay}
Let $\Sigma^1_n\subset \Sigma_n$ be a subset of $\Sigma_n$ with $$\Sigma^1_n\subset B_{r_n}(x_n)\subset B_1(0).$$ Suppose $$dist(\Sigma^1_n,\partial B_{r_n}(x_n))=o(1)dist(\Sigma_n\setminus \Sigma^1_n,\partial B_{r_n}(x_n)).$$ Then for any $s_n\geq 2r_n$ with $s_n=o(1)dist(\Sigma_n\setminus \Sigma^1_n,\partial B_{r_n}(x_n))$, if $u_n$ has fast decay on $\partial B_{s_n}(x_n)$, then for any $\beta_n\to\infty$ with $\beta_ns_n=o(1)dist(\Sigma_n\setminus \Sigma^1_n,\partial B_{r_n}(x_n))$, there exists $\alpha_n\to\infty$ with $\alpha_n=o(1)\beta_n$ such that $w_n$ has fast decay in $B_{\alpha_ns_n}\setminus B_{s_n}$, i.e. $$w_n(x)+2\log dist(x,\Sigma_n)+ 2N\log |x|+\log\lambda_ne^{\bar{u}_n}\leq -N_n, \ \forall s_n\leq |x-x_n|\leq \alpha_ns_n,$$ for some $N_n\to\infty$, and $$\int_{B_{\alpha_ns_n}(x_n)\setminus B_{s_n}(x_n)}\lambda_n e^{\bar{u}_n}|x|^{2N}e^{\zeta(x)}e^{w_n(x)}dx=o(1).$$
\end{lem}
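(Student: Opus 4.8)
The plan is to reduce the pointwise statement to a one–dimensional differential inequality for the angular average of $w_n$ on circles centred at $x_n$, and then to integrate that inequality outward from $\partial B_{s_n}(x_n)$, letting $\alpha_n$ grow slowly enough that the contribution of the singular weight $2N\log|x|$ is absorbed. Write
$$A_n:=B_{\alpha_ns_n}(x_n)\setminus B_{s_n}(x_n),\qquad g_n:=\lambda_ne^{\bar{u}_n}|x|^{2N}e^{\zeta}e^{w_n}\big(1-e^{\bar{u}_n}|x|^{2N}e^{\zeta}e^{w_n}\big)\ge 0,$$
the nonnegativity being \eqref{equ:08}, and set
$$W_n(\rho):=\frac{1}{2\pi}\int_0^{2\pi}w_n(x_n+\rho e^{i\theta})\,d\theta,\qquad \sigma_n(\rho):=\frac{1}{2\pi}\int_{B_\rho(x_n)}g_n\,dx .$$
I first fix $\alpha_n\to\infty$ with $\alpha_n=o(\beta_n)$; then $\alpha_ns_n=o(1)\,\mathrm{dist}(\Sigma_n\setminus\Sigma^1_n,\partial B_{r_n}(x_n))$, and with $\Sigma^1_n\subset B_{r_n}(x_n)$ and $s_n\ge 2r_n$ this forces $A_n\cap\Sigma_n=\emptyset$ and, for every $x\in A_n$, $\mathrm{dist}(x,\Sigma_n)=\mathrm{dist}(x,\Sigma^1_n)$ comparable to $|x-x_n|$. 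Covering each circle $\partial B_\rho(x_n)\subset A_n$ by a bounded number of discs of radius $\tfrac12 d_n$ and applying the oscillation Lemma \ref{lem:osc}, the oscillation of $w_n$ on that circle is $O(1)$; hence $w_n(x)=W_n(\rho)+O(1)$ there, uniformly in $A_n$.

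The engine is the identity $\rho\,W_n'(\rho)=-\sigma_n(\rho)$, obtained by integrating $-\Delta w_n=g_n$ over $B_\rho(x_n)$; note $\sigma_n$ is nondecreasing. Since $|x|\le|x_n|+\rho$ on $\partial B_\rho(x_n)$ and the maximum over the circle of the fast–decay quantity is attained, up to $O(1)$, at the point farthest from the origin, the comparison function
$$F_n(\rho):=W_n(\rho)+2\log\rho+2N\log(|x_n|+\rho)+\log\lambda_ne^{\bar{u}_n}$$
satisfies $|M_n(\rho)-F_n(\rho)|\le C$, where $M_n(\rho):=\max_{\partial B_\rho(x_n)}\big(w_n+2\log\mathrm{dist}(\cdot,\Sigma_n)+2N\log|\cdot|+\log\lambda_ne^{\bar{u}_n}\big)$; in particular the hypothesis of fast decay on $\partial B_{s_n}(x_n)$ gives $F_n(s_n)\le -N_n+C$. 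Differentiating and using $\sigma_n\ge 0$,
$$F_n'(\rho)=\frac{1}{\rho}\big(2-\sigma_n(\rho)\big)+\frac{2N}{|x_n|+\rho}\le \frac{2+2N}{\rho},$$
so integrating from $s_n$ to any $\rho\le\alpha_ns_n$ yields $M_n(\rho)\le F_n(\rho)+C\le -N_n+(2+2N)\log\alpha_n+C'$. It therefore suffices to have also fixed $\alpha_n$ with $\log\alpha_n=o(N_n)$ — compatible with $\alpha_n\to\infty$ and $\alpha_n=o(\beta_n)$, e.g.\ $\alpha_n=\min(\sqrt{\beta_n},e^{\sqrt{N_n}})$ — so that $N_n':=N_n-(2+2N)\log\alpha_n-C'\to\infty$ and $w_n+2\log\mathrm{dist}(x,\Sigma_n)+2N\log|x|+\log\lambda_ne^{\bar{u}_n}\le -N_n'$ throughout $A_n$, which is the asserted fast decay.

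With pointwise fast decay in hand the energy estimate is immediate: exponentiating gives $\lambda_ne^{\bar{u}_n}|x|^{2N}e^{\zeta}e^{w_n}\le Ce^{-N_n'}\mathrm{dist}(x,\Sigma_n)^{-2}$ on $A_n$, and since $\mathrm{dist}(x,\Sigma_n)\sim|x-x_n|$ there,
$$\int_{A_n}\lambda_ne^{\bar{u}_n}|x|^{2N}e^{\zeta}e^{w_n}\,dx\le Ce^{-N_n'}\int_{s_n}^{\alpha_ns_n}\frac{dr}{r}=Ce^{-N_n'}\log\alpha_n=o(1).$$
The genuinely delicate point — precisely where the singular source makes this harder than the case $N=0$ treated in \cite{Lin-Wei-Zhang} — is the weight $2N\log|x|$: on a circle $\partial B_\rho(x_n)$ passing near the origin this term has unbounded oscillation, so one cannot replace the whole fast–decay quantity by its angular average, and in the differential inequality it produces the extra term $\tfrac{2N}{|x_n|+\rho}$, i.e.\ an increment of size $2N\log\alpha_n$. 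The main obstacle is thus the balancing of $\alpha_n$ against $N_n$: the increment is harmless exactly because $\alpha_n$ is allowed to grow strictly more slowly than $e^{N_n}$ while still satisfying $\alpha_n=o(\beta_n)$. I remark that if one wants the decay to \emph{improve} (rather than merely persist) across the annulus, one instead invokes $\sigma_n(\rho)\ge\sigma_n(s_n)\ge 4(1+N)-o(1)$ when $B_{s_n}(x_n)$ captures the singular bubble sitting over the origin — a consequence of Proposition \ref{prop:bubble-tree} and Lemma \ref{lem:02} — which makes $F_n'\le 0$ outright.
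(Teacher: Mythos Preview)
Your proof is correct and takes a genuinely different route from the paper's. The paper proceeds by first proving, for each fixed dilation factor $\Lambda>0$, that fast decay on $\partial B_{s_n}(x_n)$ persists on the whole annulus $B_{\Lambda s_n}(x_n)\setminus B_{s_n}(x_n)$ with a loss $C(\Lambda)$; to do so it combines Lemma~\ref{lem:osc} with a case analysis on the position of the origin (Case~1: $|x_n|\ge 2s_n$, where $|x|\sim|x_n|$ on the annulus; Case~2: $|x_n|\le 2s_n$, where one anchors the comparison at a point $y_n\in\partial B_{s_n}(x_n)$ with $|y_n|\ge s_n$ and treats separately the subregions $|x|\le\tfrac12 s_n$ and $|x|\ge\tfrac12 s_n$). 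Only after this does it pass, by a diagonal/threshold argument, from fixed $\Lambda$ to a slowly growing $\alpha_n$. Your argument instead packages the singular weight into the comparison function $F_n(\rho)=W_n(\rho)+2\log\rho+2N\log(|x_n|+\rho)+\log\lambda_ne^{\bar u_n}$ and integrates the differential inequality $F_n'(\rho)\le(2+2N)/\rho$ outward directly; this avoids the case split entirely and makes the required relation $\log\alpha_n=o(N_n)$ explicit. The paper's approach is slightly more elementary (no ODE, only pointwise comparisons), while yours is more quantitative and more in the spirit of the radial-average arguments used later in Proposition~\ref{prop:01}. The key observation that makes your shortcut work --- replacing $\max_{\partial B_\rho(x_n)}2N\log|x|$ by $2N\log(|x_n|+\rho)$ up to $O(1)$, so that the unbounded oscillation of $\log|x|$ on circles near the origin never has to be confronted --- is exactly what the paper's case analysis is achieving by hand.
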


\begin{proof}

Since $w_n(x)$ has fast decay on $\partial B_{s_n}(x_n)$, we have  $$w_n(x)+2\log dist(x,\Sigma_n)+ 2N\log |x|+\log\lambda_ne^{\bar{u}_n}\leq -N_n, \ \forall  |x-x_n|=s_n,$$ for some $N_n\to +\infty$.

Next, we claim: for any fixed $\Lambda>0$, there holds
\begin{equation}\label{ineq:08}
w_n(x)+2\log dist(x,\Sigma_n)+ 2N\log |x|+\log\lambda_ne^{\bar{u}_n}\leq -N_n+C(\Lambda), \ \forall  s_n\leq |x-x_n|\leq \Lambda s_n.
\end{equation}

In fact, by Lemma \ref{lem:osc}, for any $\Lambda>1$, we get
\begin{equation}\label{ineq:05}
osc_{B_{\Lambda s_n}(x_n)\setminus B_{ s_n}(x_n) }w_n(x)\leq C(\Lambda).
 \end{equation}
 Noting that for any $x\in B_{\Lambda s_n}(x_n)\setminus B_{ s_n}(x_n) $, there hold
\begin{align}
dist(x,\Sigma_n)&\leq |x-x_n|+dist(x_n,\Sigma_n)\leq (\Lambda+\frac{1}{2})s_n,\label{ineq:06}\\
dist(x,\Sigma_n)&\geq s_n-r_n\geq \frac{1}{2}s_n,\label{ineq:07}
 \end{align} which implies
 \begin{equation}\label{ineq:11}
osc_{B_{\Lambda s_n}(x_n)\setminus B_{ s_n}(x_n) }\log dist(x,\Sigma_n)\leq C(\Lambda).
 \end{equation}

\

Now, we consider the following two cases:

\

\textbf{Case 1:} $|x_n|\geq 2 s_n$.

\

In this case, for any $x\in B_{\Lambda s_n}(x_n)\setminus B_{ s_n}(x_n) $, there holds
\begin{align*}
|x|&\leq |x-x_n|+|x_n|\leq \Lambda s_n+|x_n|\leq (1+\frac{\Lambda}{2})|x_n|.
 \end{align*}

Taking any point $p_n\in\partial B_{s_n}(x_n)$, since $|p_n|\geq |x_n|-|p_n-x_n|= |x_n|-s_n\geq \frac{1}{2}|x_n|$, thus, we obtain \begin{align*}
&w_n(x)+2\log dist(x,\Sigma_n)+ 2N\log |x|+\log\lambda_ne^{\bar{u}_n}\\
&\leq w_n(p_n)+2\log dist(p_n,\Sigma_n)+ 2N\log |p_n|+\log\lambda_ne^{\bar{u}_n}+2N\log\frac{|x|}{|p_n|}+C(\Lambda)\\&\leq -N_k+C(\Lambda), \ \forall s_n\leq |x-x_n|\leq \Lambda s_n,
\end{align*} which implies \eqref{ineq:08} immediately.

\

\textbf{Case 2:} $|x_n|\leq 2s_n$.

\

In this case, we first choose a point $y_n\in\partial B_{s_n}(x_n)$ such that $|y_n|\geq s_n$. Then we have $$w_n(y_n)+2\log dist(y_n,\Sigma_n)+ 2N\log |y_n|+\log\lambda_ne^{\bar{u}_n}\leq -N_k.$$

Noting that for any $x\in B_{\Lambda s_n}(x_n)\setminus B_{ s_n}(x_n) $, if $|x|\leq \frac{1}{2}s_n$, then by \eqref{ineq:05}-\eqref{ineq:07}, we have
\begin{align*}
&w_n(x)+2\log dist(x,\Sigma_n)+ 2N\log |x|+\log\lambda_ne^{\bar{u}_n}\\&\leq
w_n(y_n)+2\log dist(y_n,\Sigma_n)+ 2N\log |y_n|+\log\lambda_ne^{\bar{u}_n}\\&\leq -N_k+C(\Lambda).
\end{align*}

For $x\in B_{\Lambda s_n}(x_n)\setminus B_{ s_n}(x_n) $, if $|x|\geq \frac{1}{2}s_n$, since $$\frac{1}{2}s_n\leq |x|\leq |x-x_n|+|x_n|\leq (\Lambda+2)s_n,$$ we get
\begin{align*}
osc_{x\in B_{\Lambda s_n}(x_n)\setminus B_{ s_n}(x_n),\ |x|\geq \frac{1}{2}s_n}\log |x|\leq C(\Lambda).
\end{align*}

Combing this with \eqref{ineq:05}-\eqref{ineq:07}, we have
\begin{align*}
&w_n(x)+2\log dist(x,\Sigma_n)+ 2N\log |x|+\log\lambda_ne^{\bar{u}_n}\\&\leq
w_n(y_n)+2\log dist(y_n,\Sigma_n)+ 2N\log |y_n|+\log\lambda_ne^{\bar{u}_n}\\&\leq -N_k+C(\Lambda).
\end{align*} Thus, we proved \eqref{ineq:08}.

By \eqref{ineq:08}, it is easy to see that, for any $\beta_n\to\infty$, there exists $\alpha^1_n\to\infty,\ \alpha^1_n=o(1)\beta_n$ such that
\begin{equation}\label{ineq:09}
w_n(x)+2\log dist(x,\Sigma_n)+ 2N\log |x|+\log\lambda_ne^{\bar{u}_n}\leq -\frac{1}{2}N_n, \ \forall  s_n\leq |x-x_n|\leq \alpha^1_n s_n.
\end{equation}
Now, we can choose $\alpha_n\to\infty,\ \ \alpha_n=o(1)\alpha^1_n$ and $e^{-\frac{1}{2}N_n}\log\alpha_n=o(1)$. Then we have
\begin{align*}
\int_{B_{\alpha_ns_n}(x_n)\setminus B_{s_n}(x_n)}\lambda_n e^{\bar{u}_n}|x|^{2N}e^{\zeta(x)}e^{w_n(x)}dx&\leq C e^{-\frac{1}{2}N_n} \int_{B_{\alpha_ns_n}(x_n)\setminus B_{s_n}(x_n)}e^{-2\log dist(x,\Sigma_n)}dx\\&\leq C e^{-\frac{1}{2}N_n+2\log 2} \int_{B_{\alpha_ns_n}(x_n)\setminus B_{s_n}(x_n)}e^{-2\log |x-x_n|}dx\\&\leq Ce^{-\frac{1}{2}N_n+2\log 2}\log\alpha_n=o(1),
\end{align*} where the second inequality follows from the fact that $$ dist(x,\Sigma_n^1)\geq |x-x_n|- dist(x_n,\Sigma^1_n)\geq |x-x_n|-r_n\geq |x-x_n|-\frac{1}{2}s_n\geq \frac{1}{2}|x-x_n|.$$ We prove the lemma.

\end{proof}

\

\begin{lem}\label{lem:01}
Let $\Sigma^1_n\subset \Sigma_n$ be a subset of $\Sigma_n$ with $$\Sigma^1_n\subset B_{r_n}(x_n)\subset B_1(0).$$ Suppose $$dist(\Sigma^1_n,\partial B_{r_n}(x_n))=o(1)dist(\Sigma_n\setminus \Sigma^1_n,\partial B_{r_n}(x_n)).$$

For any $s_n\geq 2r_n$ and $R_n\to\infty$ satisfying $$R_ns_n=o(1)dist(\Sigma_n\setminus \Sigma^1_n,\partial B_{r_n}(x_n)),$$ if $w_n$ has fast decay in $ B_{R_ns_n}(x_n)\setminus B_{s_n}(x_n)$ and \begin{equation}\label{equ:12}
\int_{B_{R_ns_n}(x_n)\setminus B_{s_n}(x_n)}\lambda_ne^{\bar{u}_n}|x|^{2N}e^{w_n}dx=o(1),
\end{equation} then we have $$\nabla w_n(x)=-\frac{x-x_n}{|x-x_n|^2}\alpha_n(s_n;x_n)+\frac{o(1)}{|x-x_n|},\ \ x\in\partial B_{\sqrt{R_n}s_n}(x_n).$$
\end{lem}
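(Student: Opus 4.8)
The plan is to split $w_n$ on the annulus into its radial average and a purely angular remainder, to evaluate the radial part exactly by a flux identity, and to show that the remainder contributes only $o(1)/|x-x_n|$ to the gradient on the geometric middle circle $\partial B_{\rho_n}(x_n)$, where I abbreviate $\rho_n:=\sqrt{R_n}\,s_n$, the geometric mean of the inner radius $s_n$ and the outer radius $R_ns_n$.

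First I treat the radial part. Set $f_n(x):=\lambda_n e^{\bar u_n}|x|^{2N}e^{\zeta(x)}e^{w_n}(1-e^{\bar u_n}|x|^{2N}e^{\zeta(x)}e^{w_n})$, so that $-\Delta w_n=f_n$ with $f_n\ge 0$ by \eqref{equ:08}, and let $\bar w_n(r)$ be the average of $w_n$ over $\partial B_r(x_n)$. Averaging \eqref{equ:02} in polar coordinates gives $(r\bar w_n')'=-r\bar f_n$; integrating from $0$ to $r$ and using the smoothness of $w_n$ at $x_n$ (which forces $\lim_{r\to 0}r\bar w_n'(r)=0$) yields the exact identity $r\bar w_n'(r)=-\alpha_n(r;x_n)$. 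Taking $r=\rho_n$, and using $0\le f_n\le C\lambda_n e^{\bar u_n}|x|^{2N}e^{w_n}$ together with \eqref{equ:12} to absorb the annular mass $\alpha_n(\rho_n;x_n)-\alpha_n(s_n;x_n)=\tfrac{1}{2\pi}\int_{B_{\rho_n}(x_n)\setminus B_{s_n}(x_n)}f_n\,dx=o(1)$, I obtain $\bar w_n'(\rho_n)=-\alpha_n(s_n;x_n)/\rho_n+o(1)/\rho_n$, which is the radial component of the claimed formula.

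The second and main step is to control the non-radial part. I rescale by $W_n(y):=w_n(x_n+\rho_n y)$, which solves $-\Delta W_n=F_n$ with $F_n(y):=\rho_n^2 f_n(x_n+\rho_n y)$ on the annulus $\{1/\sqrt{R_n}\le|y|\le\sqrt{R_n}\}$, a region exhausting $\mathbb{R}^2\setminus\{0\}$ as $R_n\to\infty$. The key geometric fact is that, because $\Sigma^1_n\subset B_{r_n}(x_n)$ with $s_n\ge 2r_n$ and $\Sigma_n\setminus\Sigma^1_n$ lies far beyond $B_{R_ns_n}(x_n)$, one has $d_n(x)\sim|x-x_n|$ throughout the annulus. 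Hence Lemma \ref{lem:osc}, after covering each circle by finitely many balls of radius $\sim|x-x_n|$, shows that $W_n$ has uniformly bounded oscillation on every circle $\{|y|=\sigma\}$; meanwhile the fast decay hypothesis gives $\lambda_n e^{\bar u_n}|x|^{2N}e^{w_n}\le e^{-N_n}d_n^{-2}$ with $N_n\to\infty$, so $F_n(y)\le Ce^{-N_n}|y|^{-2}\to 0$ in $L^\infty_{loc}(\mathbb{R}^2\setminus\{0\})$. Subtracting circular averages, $W_n^\perp(y):=W_n(y)-\bar w_n(\rho_n|y|)$ is uniformly bounded, has zero average on every circle, and solves an equation whose right-hand side tends to $0$ in $L^\infty_{loc}$. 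Standard elliptic estimates then give $C^1_{loc}$ subconvergence to a harmonic function on $\mathbb{R}^2\setminus\{0\}$ that is bounded and has vanishing circular averages, hence is identically $0$; since the limit is always $0$, the full sequence $\nabla W_n^\perp$ tends to $0$ uniformly near $\{|y|=1\}$. Translating back, this says $\partial_\theta w_n=o(1)$ and $\partial_r w_n-\bar w_n'=o(1)/\rho_n$ on $\partial B_{\rho_n}(x_n)$.

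Combining the two steps, on $\partial B_{\sqrt{R_n}s_n}(x_n)$ the radial component of $\nabla w_n$ equals $-\alpha_n(s_n;x_n)/\rho_n+o(1)/\rho_n$ and the tangential component is $o(1)/\rho_n$; since $|x-x_n|=\rho_n$ there and $\tfrac{x-x_n}{|x-x_n|}$ is the radial direction, this is exactly $\nabla w_n(x)=-\frac{x-x_n}{|x-x_n|^2}\alpha_n(s_n;x_n)+\frac{o(1)}{|x-x_n|}$. I expect the non-radial step to be the main obstacle: the point is that mere $L^1$-smallness of the forcing would not give pointwise gradient control, so it is essential to exploit the \emph{pointwise} decay of $F_n$ furnished by fast decay together with the fact that the logarithmic width of the annulus tends to infinity, so that boundedness of the limit on the entire punctured plane annihilates every non-radial Fourier mode. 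The singular term $2N\log|x|$ does not enter this argument directly, because on the middle circle $|x|\sim|x-x_n|$ up to bounded factors and it has already been absorbed into the fast-decay bound.
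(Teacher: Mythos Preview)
Your proof is correct, but it follows a genuinely different route from the paper's. The paper represents $w_n=\phi_n+\eta_n$ with $\phi_n$ the harmonic extension of the boundary values and writes $\nabla\eta_n(x)=-\tfrac{1}{2\pi}\int_{B_1}\tfrac{x-y}{|x-y|^2}(-\Delta w_n)\,dy$; it then splits $B_1(0)$ into five shells around $\partial B_{\sqrt{R_n}s_n}(x_n)$ and estimates the kernel on each piece. The inner ball $B_{s_n}(x_n)$ produces the main term $-\tfrac{x-x_n}{|x-x_n|^2}\alpha_n(s_n;x_n)$ (since $\tfrac{x-y}{|x-y|^2}=\tfrac{x-x_n}{|x-x_n|^2}+o(1)/|x-x_n|$ there), the annuli adjacent to the middle circle are handled by the pointwise fast-decay bound, and the far regions contribute $o(1)/|x-x_n|$ by mass smallness \eqref{equ:12} or by kernel smallness. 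No compactness or Liouville step is needed; the argument is entirely quantitative via Green's kernel estimates, and Lemma~\ref{lem:osc} is not invoked in this lemma.

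Your radial/angular decomposition is a legitimate alternative: the flux identity isolates the main term cleanly, and the compactness + Liouville argument for the mean-zero part is sound because $d_n(x)\sim|x-x_n|$ on the annulus, so Lemma~\ref{lem:osc} gives a uniform $L^\infty$ bound on $W_n^\perp$ while fast decay makes the rescaled forcing vanish in $L^\infty_{loc}(\mathbb{R}^2\setminus\{0\})$. The trade-off is that your argument is softer and leans on the oscillation lemma plus a limiting Liouville step, whereas the paper's Green's function decomposition is more direct, gives explicit error control shell by shell, and simultaneously handles the contribution from $B_1\setminus B_{R_ns_n}(x_n)$ via $\phi_n$ without any compactness argument.
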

\begin{proof}
The proof of this lemma is more or less standard. Here, for reader's convenience, we give a detailed proof.

Let $\phi_n(x)$ be the solution of
\begin{align*}
\begin{cases}
-\Delta \phi_n(x)=0,\ \ &in\ \ B_1(0),\\
\phi_n(x)=w_n(x),\ \ &on\ \ \partial B_1(0).
\end{cases}
\end{align*}
A standard elliptic theory yields that $\|w_n(x)\|_{C^1(B_1\setminus B_{\delta})}\leq C(\delta)$ for any $\delta>0$. Then it is clear that $$\|\phi_n(x)\|_{C^1(B_1(0))}\leq \|w_n(x)\|_{C^1(\partial B)} \leq C.$$ Let $\eta_n(x):=w_n(x)-\phi_n(x)$, then $$\eta_n(x)=\int_{B}G(x,y)(-\Delta w_n(y))dy,$$ where $G(x,y)$ is the Green function as in Lemma \ref{lem:osc}. Thus,
\begin{equation*}
\nabla \eta_n(x)=-\frac{1}{2\pi}\int_{B}\frac{x-y}{|x-y|^2}(-\Delta w_n(y))dy.
\end{equation*}

Next, we divide the integral domain $B_1(0)$ as follows:
\begin{align*}
B_1(0)=&B_1(0)\setminus B_{R_ns_n}(x_n)\cup B_{R_ns_n}(x_n)\setminus B_{\frac{3}{2}\sqrt{R_n}s_n}(x_n)\cup B_{\frac{3}{2}\sqrt{R_n}s_n}(x_n)\setminus B_{\frac{1}{2}\sqrt{R_n}s_n}(x_n)\\&\cup B_{\frac{1}{2}\sqrt{R_n}s_n}(x_n)\setminus B_{s_n}(x_n)\cup B_{s_n}(x_n).\end{align*}

For $x\in \partial B_{\sqrt{R_n}s_n}(x_n)$ and $y\in B_1(0)\setminus B_{R_ns_n}(x_n)$, since $$\frac{x-y}{|x-y|^2}=\frac{o(1)}{|x-x_n|},$$ then we immediately have $$-\frac{1}{2\pi}\int_{B_1(0)\setminus B_{R_ns_n}(x_n)}\frac{x-y}{|x-y|^2}(-\Delta w_n(y))dy=\frac{o(1)}{|x-x_n|}.$$

For $y\in B_{R_ns_n}(x_n)\setminus B_{\frac{3}{2}\sqrt{R_n}s_n}(x_n)$, since $$|\frac{x-y}{|x-y|^2}|\leq \frac{C}{|x-x_n|},$$ by \eqref{equ:12}, we have $$\left|\frac{1}{2\pi}\int_{B_1(0)\setminus B_{R_ns_n}(x_n)}\frac{x-y}{|x-y|^2}(-\Delta w_n(y))dy\right|\leq  \frac{C}{|x-x_n|}\int_{B_1(0)\setminus B_{R_ns_n}(x_n)}(-\Delta w_n(y))dy=\frac{o(1)}{|x-x_n|}.$$

For $y\in B_{\frac{3}{2}\sqrt{R_n}s_n}(x_n)\setminus B_{\frac{1}{2}\sqrt{R_n}s_n}(x_n)$, noting that $$B_{\frac{3}{2}\sqrt{R_n}s_n}(x_n)\setminus B_{\frac{1}{2}\sqrt{R_n}s_n}(x_n)\subset \bigcup_{j=1}^JB_{\frac{|x-x_n|}{2}}(z_j),$$ where $z_j\in \partial B_{\sqrt{R_n}s_n}(x_n)$, $J$ is a universal constant independent of $n$, since $w_n$ has fast decay in $B_{R_ns_n}(x_n)\setminus B_{s_n}(x_n)$, we have $$|\Delta w_n(x)|\leq C \lambda_ne^{\bar{u}_n}|x|^{2N}e^{w_n}\leq\frac{o(1)}{|x-x_n|^2}.$$ Then,
\begin{align*}
\left|\frac{1}{2\pi}\int_{B_1(0)\setminus B_{R_ns_n}(x_n)}\frac{x-y}{|x-y|^2}(-\Delta w_n(y))dy\right|\leq \frac{o(1)}{|x-x_n|^2}\sum_{j=1}^J\int_{B_{\frac{|x-x_n|}{2}}(z_j)}\frac{1}{|x-y|}dy=\frac{o(1)}{|x-x_n|}.
\end{align*}

For $y\in B_{\frac{1}{2}\sqrt{R_n}s_n}(x_n)\setminus B_{s_n}(x_n)$, since $$|\frac{x-y}{|x-y|^2}|\leq \frac{C}{|x-x_n|},$$ by \eqref{equ:12}, we have $$\left|\frac{1}{2\pi}\int_{B_1(0)\setminus B_{R_ns_n}(x_n)}\frac{x-y}{|x-y|^2}(-\Delta w_n(y))dy\right|\leq  \frac{C}{|x-x_n|}\int_{B_1(0)\setminus B_{R_ns_n}(x_n)}(-\Delta w_n(y))dy=\frac{o(1)}{|x-x_n|}.$$

For $y\in B_{s_n}(x_n)$, since $$\frac{x-y}{|x-y|^2}=\frac{x-x_n}{|x-x_n|^2}+\frac{o(1)}{|x-x_n|},$$ we have  $$-\frac{1}{2\pi}\int_{B_{s_n}(x_n)}\frac{x-y}{|x-y|^2}(-\Delta w_n(y))dy=-\frac{x-x_n}{|x-x_n|^2}\alpha(s_n;x_n)+\frac{o(1)}{|x-x_n|}.$$ This immediately implies the conclusion of lemma.
\end{proof}

\

Now, at this position, we derive two kinds of  Pohozaev identities (local type and global type) for Chern-Simon-Higgs equation which play an important role in our proof.
\begin{lem}\label{lem:pohozaev}
Let $\Sigma^1_n\subset \Sigma_n$ be a subset of $\Sigma_n$ with $$\Sigma^1_n\subset B_{r_n}(x_n)\subset B_1(0).$$ Suppose $$dist(\Sigma^1_n,\partial B_{r_n}(x_n))=o(1)dist(\Sigma_n\setminus \Sigma^1_n,\partial B_{r_n}(x_n)).$$

Then for any $s_n\geq 2r_n$ with $s_n=o(1)dist(\Sigma_n\setminus \Sigma^1_n,\partial B_{r_n}(x_n))$, if $u_n$ has fast decay on $\partial B_{s_n}(x_n)$, we have the  following Pohozaev identities:
\begin{itemize}
\item[(1)] local type: if $\lim\frac{|x_n|}{s_n}=\infty$, then $$\int_{B_{s_n}(x_n)}2\lambda_ne^{\bar{u}_n}|x|^{2N}e^{\zeta(x)}e^{w_n}[1-\frac{1}{2}e^{\bar{u}_n}|x|^{2N}e^{\zeta(x)}e^{w_n}]dx=\pi\alpha^2_n(s_n;x_n)+o(1).$$
\item[(2)] global type: if $\frac{|x_n|}{s_n}\leq C$, then $$\int_{B_{s_n}(x_n)}2\lambda_ne^{\bar{u}_n}|x|^{2N}e^{\zeta(x)}e^{w_n}[1-\frac{1}{2}e^{\bar{u}_n}|x|^{2N}e^{\zeta(x)}e^{w_n}]dx=\pi\alpha^2_n(s_n;x_n)-4\pi N\alpha_n(s_n;x_n)+o(1).$$
\end{itemize}

\end{lem}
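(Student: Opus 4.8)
The plan is to derive both identities from one Pohozaev computation, the two cases differing only in the multiplier field and in the radius over which I integrate. Throughout, write $g_n:=\lambda_ne^{\bar u_n}|x|^{2N}e^{\zeta}e^{w_n}$ and $h_n:=e^{\bar u_n}|x|^{2N}e^{\zeta}e^{w_n}\le 1$, so that the nonlinearity in \eqref{equ:02} is $g_n(1-h_n)\ge 0$ and $2\pi\alpha_n(\cdot;x_n)=\int g_n(1-h_n)\,dx$ is bounded by $\Lambda$. The first task is the common preparation. Since $w_n$ has fast decay on $\partial B_{s_n}(x_n)$, Lemma \ref{lem:fast-decay} supplies $R_n\to\infty$ with $R_ns_n=o(1)\,dist(\Sigma_n\setminus\Sigma^1_n,\partial B_{r_n}(x_n))$ such that $w_n$ has fast decay on $B_{R_ns_n}(x_n)\setminus B_{s_n}(x_n)$ and $\int_{B_{R_ns_n}(x_n)\setminus B_{s_n}(x_n)}g_n\,dx=o(1)$. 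In the local case I additionally require $R_n=o((|x_n|/s_n)^2)$, which is permissible since $|x_n|/s_n\to\infty$; this forces $\sigma_n:=\sqrt{R_n}\,s_n=o(|x_n|)$, hence $0\notin B_{\sigma_n}(x_n)$ and $|x|\ge\tfrac12|x_n|$ on $B_{\sigma_n}(x_n)$. With this $R_n$, Lemma \ref{lem:01} gives on $\partial B_{\sigma_n}(x_n)$
\[
\nabla w_n(x)=-\frac{x-x_n}{|x-x_n|^2}\,\alpha_n(s_n;x_n)+\frac{o(1)}{|x-x_n|}.
\]
The annular smallness also yields $\alpha_n(\sigma_n;x_n)=\alpha_n(s_n;x_n)+o(1)$ and $\int_{B_{\sigma_n}(x_n)}2g_n(1-\tfrac12 h_n)\,dx=\int_{B_{s_n}(x_n)}2g_n(1-\tfrac12 h_n)\,dx+o(1)$, the latter being the left side of the asserted identity.

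For the Pohozaev step set $P(x,w)=\lambda_ne^{\bar u_n}|x|^{2N}e^{\zeta}e^{w}(1-\tfrac12 e^{\bar u_n}|x|^{2N}e^{\zeta}e^{w})$, so that $\partial_w P(x,w_n)=g_n(1-h_n)$ is the nonlinearity of \eqref{equ:02} and $\nabla_x P(x,w_n)=(2N\tfrac{x}{|x|^2}+\nabla\zeta)\,g_n(1-h_n)$. In the global case I multiply \eqref{equ:02} by $x\cdot\nabla w_n$ and integrate over $B_{\sigma_n}(x_n)$; in the local case I multiply by $(x-x_n)\cdot\nabla w_n$. With $X=x$ or $X=x-x_n$, the identity $(\partial_wP)\,X\cdot\nabla w_n=X\cdot\nabla[P(x,w_n)]-X\cdot\nabla_xP$ and the divergence theorem turn the right side into a boundary $P$-term $\int_{\partial B_{\sigma_n}(x_n)}(X\cdot\nu)P\,dS$, the bulk term $-\int_{B_{\sigma_n}(x_n)}2P\,dx$, and the weight term $\int_{B_{\sigma_n}(x_n)}\big(X\cdot\nabla_x\log(|x|^{2N}e^{\zeta})\big)g_n(1-h_n)\,dx$, while the left side becomes the boundary gradient integral $\int_{\partial B_{\sigma_n}(x_n)}r(\tfrac12|\nabla w_n|^2-|\partial_r w_n|^2)\,d\theta$ as in \eqref{equ:22}. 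Using the gradient asymptotics, a computation identical to that in Lemma \ref{lem:02} gives the boundary gradient integral $=-\pi\alpha_n^2(s_n;x_n)+o(1)$; fast decay forces $P\le Ce^{-N_n}/\sigma_n^2$ on $\partial B_{\sigma_n}(x_n)$, so the boundary $P$-term is $o(1)$; and the bulk term equals $-\big(\text{left side}\big)+o(1)$. Since $\alpha_n(s_n;x_n)\le\Lambda/2\pi$, every product of an $o(1)$ factor with $\alpha_n$ or $\alpha_n^2$ is again $o(1)$.

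It remains to evaluate the weight term, which is the only place the two geometries separate and is the genuine difficulty caused by the singular source $|x|^{2N}$. For the local multiplier $X=x-x_n$ one has $X\cdot\nabla_x\log|x|^{2N}=2N\tfrac{(x-x_n)\cdot x}{|x|^2}$, and $\sigma_n=o(|x_n|)$ gives $|(x-x_n)\cdot x|/|x|^2\le\sigma_n/|x|\le 2\sigma_n/|x_n|\to0$ uniformly on $B_{\sigma_n}(x_n)$, while $|(x-x_n)\cdot\nabla\zeta|\le C\sigma_n\to0$; thus the weight term is $o(1)\cdot O(\alpha_n)=o(1)$, which yields $I_n=\pi\alpha_n^2(s_n;x_n)+o(1)$, i.e. case $(1)$. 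For the global multiplier $X=x$ one has the clean $x\cdot\nabla_x\log(|x|^{2N}e^{\zeta})=2N+x\cdot\nabla\zeta$, so the weight term is $2N\cdot2\pi\alpha_n(\sigma_n;x_n)+o(1)=4\pi N\alpha_n(s_n;x_n)+o(1)$, the $\zeta$ contribution being $o(1)$ because $|x|\le\sigma_n+|x_n|\to0$. Here one must also check that the off‑center boundary terms survive: since $|x_n|\le Cs_n=o(\sigma_n)$ one gets $x\cdot\nu=\sigma_n(1+o(1))$ and $x\cdot\nabla w_n=-\alpha_n(s_n;x_n)+o(1)$ on $\partial B_{\sigma_n}(x_n)$, so the boundary gradient integral is again $-\pi\alpha_n^2(s_n;x_n)+o(1)$. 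Collecting the four contributions gives $I_n=\pi\alpha_n^2(s_n;x_n)-4\pi N\alpha_n(s_n;x_n)+o(1)$, i.e. case $(2)$.

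The main obstacle is precisely the treatment of this $2N$-weight term: one must pick the multiplier (and, in the local case, shrink the radius to $\sigma_n=o(|x_n|)$) so that it either disappears in the limit or collapses to the single clean contribution $4\pi N\alpha_n$, while simultaneously keeping every boundary and annular error $o(1)$. The mechanism that makes all these errors negligible is exactly the fast‑decay control of Lemma \ref{lem:fast-decay} together with the gradient expansion of Lemma \ref{lem:01}; absent the singular source these reduce to the standard Liouville estimates, and it is the interplay of $|x|^{2N}$ with the position of $x_n$ relative to the origin that dictates the split into the local and global identities.
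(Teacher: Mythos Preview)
Your argument is correct and follows essentially the same route as the paper: invoke Lemma~\ref{lem:fast-decay} to enlarge to a radius $\sigma_n=\sqrt{R_n}\,s_n$ with fast decay and negligible annular mass, multiply \eqref{equ:02} by $(x-x_n)\cdot\nabla w_n$ (local) or $x\cdot\nabla w_n$ (global), and read off the boundary gradient term via Lemma~\ref{lem:01}. The only difference is in case~(2): the paper integrates the global Pohozaev identity over $B_{\sqrt{R_n}s_n}(0)$ centered at the origin, so that $X\cdot\nu=r$ exactly and the boundary gradient term has its clean form, whereas you keep the ball centered at $x_n$ and absorb the off-center errors using $|x_n|=o(\sigma_n)$; both are valid, and your variant makes the two cases more parallel at the price of the extra check you correctly supply.
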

\begin{proof}
We divide the proof into two steps according two cases in the lemma.

\

\textbf{Step 1:} If $\lim\frac{|x_n|}{s_n}=\infty$.

\

By Lemma \ref{lem:fast-decay}, we know there exists $$R_n\to\infty,\ \ R_ns_n=o(1)|x_n|,\ \ R_ns_n=o(1)dist(\Sigma_n\setminus \Sigma^1_n,\partial B_{r_n}(x_n)),$$ such that $w_n$ has fast decay in $B_{R_ns_n}(x_n)\setminus B_{s_n}(x_n)$, i.e. $$w_n(x)+2\log dist(x,\Sigma_n)+ 2N\log |x|+\log\lambda_ne^{\bar{u}_n}\leq -N_n, \ \forall s_n\leq |x-x_n|\leq R_ns_n,$$ for some $N_n\to\infty$ and
\begin{equation}\label{equ:10}
\int_{B_{R_ns_n}(x_n)\setminus B_{s_n}(x_n)}\lambda_n e^{\bar{u}_n}|x|^{2N}e^{\zeta(x)}e^{w_n(x)}dx=o(1).
\end{equation}

Multiplying \eqref{equ:02} by $(x-x_n)\nabla w_n$ and integrating by parts, we get the following Pohozaev type identity that
\begin{align}\label{equ:pohozaev-1}
\int_{\partial B_{s}(x_n)}r\left(\frac{1}{2}|\nabla w_n|^2-|\frac{\partial w_n}{\partial r}|^2\right)d\theta=& \int_{\partial B_{s}(x_n)}r\lambda_ne^{\bar{u}_n}|x|^{2N}e^{\zeta}e^{w_n}[1-\frac{1}{2}e^{\bar{u}_n}|x|^{2N}e^{\zeta}e^{w_n}]d\theta\notag\\ &- \int_{B_{s}(x_n)}2\lambda_ne^{\bar{u}_n}|x|^{2N}e^{\zeta}e^{w_n}[1-\frac{1}{2}e^{\bar{u}_n}|x|^{2N}e^{\zeta}e^{w_n}]dx\notag\\
& - \int_{B_{s}(x_n)}2N\lambda_ne^{\bar{u}_n}|x|^{2N-2}(x\cdot (x-x_n) )e^{\zeta}e^{w_n}[1-e^{\bar{u}_n}|x|^{2N}e^{\zeta}e^{w_n}]dx\notag\\
& - \int_{B_{s}(x_n)}\lambda_ne^{\bar{u}_n}|x|^{2N}( (x-x_n)\cdot \nabla \zeta )e^{\zeta}e^{w_n}[1-e^{\bar{u}_n}|x|^{2N}e^{\zeta}e^{w_n}]dx
\end{align} for any $s_n\leq s\leq R_ns_n$, where $(r,\theta)$ is the polar coordinate system centered at point $x_n$.

Now, we take $s=\sqrt{R_n}s_n$ in \eqref{equ:pohozaev-1}. Since $w_n$ has fast decay on $\partial B_{\sqrt{R_n}s_n}(x_n)$, we have
\begin{align*}
\int_{\partial B_{s}(x_n)}r\lambda_ne^{\bar{u}_n}|x|^{2N}e^{\zeta}e^{w_n}[1-\frac{1}{2}e^{\bar{u}_n}|x|^{2N}e^{\zeta}e^{w_n}]\leq C \int_{\partial B_{s}(x_n)}r\lambda_ne^{\bar{u}_n}|x|^{2N}e^{w_n}\leq Ce^{-N_n}=o(1).
\end{align*}

Noting that for any $x\in B_{\sqrt{R_n}s_n}(x_n)$, there holds $$|x-x_n|\leq \sqrt{R_n}s_n\leq o(1)|x_n|\leq o(1)(|x|+|x-x_n|),$$ which implies $$|x-x_n|=o(1)|x|.$$ Then
\begin{align*}
&\left|\int_{B_{s}(x_n)}2N\lambda_ne^{\bar{u}_n}|x|^{2N-2}(x\cdot (x-x_n) )e^{\zeta}e^{w_n}[1-\frac{1}{2}e^{\bar{u}_n}|x|^{2N}e^{\zeta}e^{w_n}]dx\right|\notag\\
&\leq o(1)\int_{B_{s}(x_n)}2N\lambda_ne^{\bar{u}_n}|x|^{2N}e^{\zeta}e^{w_n}dx=o(1).
\end{align*}

Since $|\nabla \zeta|\leq C$, then $$\left|\int_{B_{s}(x_n)}\lambda_ne^{\bar{u}_n}|x|^{2N}( (x-x_n)\cdot \nabla \zeta )e^{\zeta}e^{w_n}[1-e^{\bar{u}_n}|x|^{2N}e^{\zeta}e^{w_n}]dx\right|\leq Cs.$$

Combining these together, by \eqref{equ:pohozaev-1}, we arrived at
\begin{align}
&\int_{\partial B_{\sqrt{R_n}s_n}(x_n)}r\left(\frac{1}{2}|\nabla w_n|^2-|\frac{\partial w_n}{\partial r}|^2\right)d\theta\notag\\&= - \int_{B_{\sqrt{R_n}s_n}(x_n)}2\lambda_ne^{\bar{u}_n}|x|^{2N}e^{\zeta}e^{w_n}[1-\frac{1}{2}e^{\bar{u}_n}|x|^{2N}e^{\zeta}e^{w_n}]dx+o(1)\notag\\&= - \int_{B_{s_n}(x_n)}2\lambda_ne^{\bar{u}_n}|x|^{2N}e^{\zeta}e^{w_n}[1-\frac{1}{2}e^{\bar{u}_n}|x|^{2N}e^{\zeta}e^{w_n}]dx+o(1),
\end{align} where the last equality follows from \eqref{equ:10}.

By lemma \ref{lem:01}, we know $$\nabla w_n(x)=-\frac{x-x_n}{|x-x_n|^2}\alpha_n(s_n;x_n)+\frac{o(1)}{|x|},\ \ x\in\partial B_{\sqrt{R_n}s_n}.$$

Then we conclude
\begin{align} \int_{B_{s_n}(x_n)}2\lambda_ne^{\bar{u}_n}|x|^{2N}e^{w_n}[1-\frac{1}{2}e^{\bar{u}_n}|x|^{2N}e^{w_n}]dx=\pi\alpha^2_n(s_n;x_n)+o(1),
\end{align} which yields the first conclusion of lemma.

\

\textbf{Step 2:} If $\frac{|x_n|}{s_n}\leq C_1$.

\

By Lemma \ref{lem:fast-decay}, we know there exists $$R_n\to\infty,\ \ R_ns_n=o(1)dist(\Sigma_n\setminus \Sigma^1_n,\partial B_{r_n}(x_n)),$$ such that $w_n$ has fast decay in $B_{2R_ns_n}(x_n)\setminus B_{s_n}(x_n)$, i.e. $$w_n(x)+2\log dist(x,\Sigma_n)+ 2N\log |x|+\log\lambda_ne^{\bar{u}_n}\leq -N_n, \ \forall s_n\leq |x-x_n|\leq 2R_ns_n,$$ for some $N_n\to\infty$ and
\begin{equation}\label{equ:11}
\int_{B_{2R_ns_n}(x_n)\setminus B_{s_n}(x_n)}\lambda_n e^{\bar{u}_n}|x|^{2N}e^{w_n(x)}dx=o(1).
\end{equation}

Multiplying \eqref{equ:02} by $x\nabla w_n$ and integrating by parts, we get the following Pohozaev type identity that
\begin{align}\label{equ:pohozaev-2}
\int_{\partial B_{s}(0)}r\left(\frac{1}{2}|\nabla w_n|^2-|\frac{\partial w_n}{\partial r}|^2\right)d\theta=& \int_{\partial B_{s}(0)}r\lambda_ne^{\bar{u}_n}|x|^{2N}e^{\zeta}e^{w_n}[1-\frac{1}{2}e^{\bar{u}_n}|x|^{2N}e^{\zeta}e^{w_n}]d\theta\notag\\ &- \int_{B_{s}(0)}2\lambda_ne^{\bar{u}_n}|x|^{2N}e^{\zeta}e^{w_n}[1-\frac{1}{2}e^{\bar{u}_n}|x|^{2N}e^{\zeta}e^{w_n}]dx\notag\\
& - \int_{B_{s}(0)}2N\lambda_ne^{\bar{u}_n}|x|^{2N}e^{\zeta}e^{w_n}[1-e^{\bar{u}_n}|x|^{2N}e^{\zeta}e^{w_n}]dx\notag\\
& - \int_{B_{s}(0)}\lambda_ne^{\bar{u}_n}|x|^{2N}( x\cdot \nabla \zeta )e^{\zeta}e^{w_n}[1-e^{\bar{u}_n}|x|^{2N}e^{\zeta}e^{w_n}]dx
\end{align} for any $0\leq s\leq 1$, where $(r,\theta)$ is the polar coordinate system centered at point $0$.

Take $s=\sqrt{R_n}s_n$ in \eqref{equ:pohozaev-2}. Noting that $$B_{s_n}(x_n)\subset B_{\sqrt{R_n}s_n}(0)\subset B_{2R_ns_n}(x_n)$$ and $$dist(x,\Sigma_n)=dist(x,\Sigma^1_n)\geq \sqrt{R_n}s_n-|x_n|-r_n\geq \frac{1}{2}\sqrt{R_n}s_n,\ \ \forall \ x\in \partial B_s(0),$$ when $n$ is big, since $w_n$ has fast decay in $B_{2R_ns_n}(x_n)\setminus B_{s_n}(x_n)$, we have
\begin{align*}
\int_{\partial B_{s}(0)}r\lambda_ne^{\bar{u}_n}|x|^{2N}e^{\zeta}e^{w_n}[1-\frac{1}{2}e^{\bar{u}_n}|x|^{2N}e^{\zeta}e^{w_n}]d\theta\leq Ce^{-N_n}\frac{s^2}{\min_{\partial B_s(0)}dist^2(x,\Sigma_n)}=o(1).
\end{align*}

By \eqref{equ:11}, we have $$\int_{B_{\sqrt{R_n}s_n}(0)}2\lambda_ne^{\bar{u}_n}|x|^{2N}e^{w_n}[1-\frac{1}{2}e^{\bar{u}_n}|x|^{2N}e^{w_n}]dx= \int_{B_{s_n}(x_n)}2\lambda_ne^{\bar{u}_n}|x|^{2N}e^{w_n}[1-\frac{1}{2}e^{\bar{u}_n}|x|^{2N}e^{w_n}]dx+o(1)$$ and $$\int_{B_{\sqrt{R_n}s_n}(0)}2N\lambda_ne^{\bar{u}_n}|x|^{2N}e^{w_n}[1-e^{\bar{u}_n}|x|^{2N}e^{w_n}]dx= \int_{B_{s_n}(x_n)}2N\lambda_ne^{\bar{u}_n}|x|^{2N}e^{w_n}[1-e^{\bar{u}_n}|x|^{2N}e^{w_n}]dx+o(1).$$

Since $|\nabla \zeta|\leq C$, then $$\left|\int_{B_{s}(0)}\lambda_ne^{\bar{u}_n}|x|^{2N}( x\cdot \nabla \zeta )e^{\zeta}e^{w_n}[1-e^{\bar{u}_n}|x|^{2N}e^{\zeta}e^{w_n}]dx\right|\leq C\sqrt{R_n}s_n=o(1).$$

Then we arrive at
\begin{align}
\int_{\partial B_{\sqrt{R_n}s_n}(0)}r\left(\frac{1}{2}|\nabla w_n|^2-|\frac{\partial w_n}{\partial r}|^2\right)d\theta&= - \int_{B_{s_n}(x_n)}2\lambda_ne^{\bar{u}_n}|x|^{2N}e^{\zeta}e^{w_n}[1-\frac{1}{2}e^{\bar{u}_n}|x|^{2N}e^{\zeta}e^{w_n}]dx\notag\\
&\quad - \int_{B_{s_n}(x_n)}2N\lambda_ne^{\bar{u}_n}|x|^{2N}e^{\zeta}e^{w_n}[1-e^{\bar{u}_n}|x|^{2N}e^{\zeta}e^{w_n}]dx.
\end{align}

By lemma \ref{lem:01}, we know $$\nabla w_n(x)=-\frac{x-x_n}{|x-x_n|^2}\alpha_n(s_n;x_n)+\frac{o(1)}{|x-x_n|}=-\frac{x}{|x|^2}\alpha_n(s_n;x_n)+\frac{o(1)}{|x|},\ \ x\in\partial B_{\sqrt{R_n}s_n}(0).$$ Then we conclude
\begin{align} \int_{B_{s_n}(x_n)}2\lambda_ne^{\bar{u}_n}|x|^{2N}e^{w_n}[1-\frac{1}{2}e^{\bar{u}_n}|x|^{2N}e^{w_n}]dx=\pi\alpha^2_n(s_n;x_n)-4\pi N\alpha_n(s_n;x_n)+o(1).
\end{align}

We finished the proof of this lemma.

 \end{proof}

\

Let $x^i_n\in \Sigma_n$. By Proposition \ref{prop:bubble-tree}, we know $$\varphi^i_n(x)=w_n(x^i_n+\mu_n^ix)-w_n(x^i_n)\to \varphi^i(x)$$ in the sense of $C^2_{loc}(\R^2)$, where $\varphi^i(x)$ satisfies \eqref{equ:07} or \eqref{equ:04}. Denote $$\int_{\R^2}|x-x_0^i|^{2N}e^{\varphi^i(x)}\left(1-\delta_i |x-x_0^i|^{2N}e^{\varphi(x)} \right)dx=2\pi b_i$$ or $$\int_{\R^2}e^{\varphi^i(x)}\left(1-\delta_i e^{\varphi^i(x)} \right)dx=2\pi b_i$$ corresponding to the fact that $(x_n^i,\mu_n^i)$ is a singular bubble or not.

\

At the end of this section, we prove following local energy identity.

\begin{prop}\label{prop:01}
Denote $$\tau^i_n:=\frac{1}{2}dist(x^i_n,\Sigma_n\setminus \{x^i_n\}).$$ Then the following two alternatives hold:
\begin{itemize}
 \item[(1)] if $(x^i_n,\mu_n^i)$ is a singular bubble, then $w_n(x)$ has fast decay on $\partial B_{\tau^i_n}(x^i_n)$ and $$\alpha_n(\tau_n^i;x^i_n)=b_i+o(1)$$ and $$\int_{B_{\tau^i_n}(x^i_n)}2\lambda_ne^{\bar{u}_n}|x|^{2N}e^{\zeta(x)}e^{w_n}[1-\frac{1}{2}e^{\bar{u}_n}|x|^{2N}e^{\zeta(x)}e^{w_n}]dx =\pi b_i^2-4\pi Nb_i+o(1).$$
 \item[(2)] if $(x^i_n,\mu_n^i)$ is not a singular bubble, denoting $s_n^i:=\min\{\tau_n^i,\frac{3}{4}|x_n^i|\}$, then $w_n(x)$ has fast decay on $\partial B_{s^i_n}(x^i_n)$ and $$\alpha_n(s_n^i;x^i_n)=b_i+o(1)$$ and $$\int_{B_{s^i_n}(x^i_n)}2\lambda_ne^{\bar{u}_n}|x|^{2N}e^{\zeta(x)}e^{w_n}[1-\frac{1}{2}e^{\bar{u}_n}|x|^{2N}e^{\zeta(x)}e^{w_n}]dx =\pi b_i^2+o(1).$$

\end{itemize}
\end{prop}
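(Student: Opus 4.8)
The plan is to treat the two alternatives in parallel. In each case the bubble $(x^i_n,\mu^i_n)$ carries the limiting mass $2\pi b_i$, so the whole issue is to prove that no energy is lost between the bubble disk $B_{\beta^i_n}(x^i_n)$, $\beta^i_n=R_n\mu^i_n$, and the outer radius, and then to read off the quadratic term from the Pohozaev identities of Lemma \ref{lem:pohozaev}. Recall from the Lan--Li classification that $\varphi^i(x)=-b_i\log|x|+C+O(|x|^{-\varepsilon})$ for $|x|\geq 2$, that the profile has total mass $2\pi b_i$, and that $b_i\geq 4(1+N)$ in the singular case while $b_i\geq 4$ in the non-singular case.

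\emph{Fast decay on the bubble boundary.} Using $\varphi^i_n\to\varphi^i$ in $C^2_{loc}(\R^2)$ and the asymptotics above, on $\partial B_{R\mu^i_n}(x^i_n)$ one has $w_n(x)=w_n(x^i_n)-b_i\log R+o(1)$. Inserting the normalization of $\mu^i_n$ --- namely $w_n(x^i_n)+\log\lambda_ne^{\bar{u}_n}=-2(1+N)\log\mu^i_n-\zeta(x^i_n)$ in the singular case (where $|x|\approx R\mu^i_n$), and $w_n(x^i_n)+2N\log|x^i_n|+\log\lambda_ne^{\bar{u}_n}=-2\log\mu^i_n-\zeta(x^i_n)$ in the non-singular case (where $|x|\approx|x^i_n|$) --- one computes
$$w_n+2\log dist(x,\Sigma_n)+2N\log|x|+\log\lambda_ne^{\bar{u}_n}=-\zeta(x^i_n)+\big(2(1+N)-b_i\big)\log R+o(1)$$
in the singular case, and the same expression with $2(1+N)$ replaced by $2$ in the non-singular case. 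Since $b_i>2(1+N)$ (resp. $b_i>2$), letting $R=R_n\to\infty$ slowly yields fast decay on $\partial B_{\beta^i_n}(x^i_n)$.

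\emph{No energy loss in the neck (main obstacle).} The annulus $B_{\tau^i_n}(x^i_n)\setminus B_{\beta^i_n}(x^i_n)$ (resp. with $s^i_n$) contains no further point of $\Sigma_n$, so Proposition \ref{prop:bubble-tree}(5) gives the upper bound $w_n+2\log dist(x,\Sigma_n)+2N\log|x|+\log\lambda_ne^{\bar{u}_n}\leq C$ there. The difficulty is that one application of Lemma \ref{lem:fast-decay} only upgrades this to genuine fast decay out to radius $o(\tau^i_n)$, so I must rule out a slow logarithmic leakage of energy across the full neck. I expect this to be the heart of the proof: combining the oscillation estimate of Lemma \ref{lem:osc} with the decay estimate of Proposition \ref{prop:bubble-tree}(5), one shows that the energy in $B_{\tau^i_n}(x^i_n)\setminus B_{\beta^i_n}(x^i_n)$ is $o(1)$ and that fast decay persists up to $\partial B_{\tau^i_n}(x^i_n)$ (resp. $\partial B_{s^i_n}(x^i_n)$). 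Granting this, the disk captures exactly the mass of the limiting profile, so $\alpha_n(\tau^i_n;x^i_n)=\frac{1}{2\pi}\cdot 2\pi b_i+o(1)=b_i+o(1)$ (resp. $\alpha_n(s^i_n;x^i_n)=b_i+o(1)$); note that both the mass density and the Pohozaev density are bounded by $C\lambda_ne^{\bar{u}_n}|x|^{2N}e^{\zeta}e^{w_n}$, and hence change by $o(1)$ across the neck.

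\emph{Pohozaev identities.} It remains to apply Lemma \ref{lem:pohozaev} at a scale $s_n$ with $B_{\beta^i_n}(x^i_n)\subset B_{s_n}(x^i_n)$, $s_n=o(\tau^i_n)$ and fast decay on $\partial B_{s_n}(x^i_n)$, transporting the outcome to the stated radius by the absence of neck energy. In the singular case $0\in B_{\beta^i_n}(x^i_n)$ and $|x^i_n|\leq C\mu^i_n$, so $|x^i_n|/s_n\to 0$ and the global type identity applies; with $\alpha_n(\tau^i_n;x^i_n)=b_i+o(1)$ it gives $\pi\alpha_n^2(\tau^i_n;x^i_n)-4\pi N\alpha_n(\tau^i_n;x^i_n)+o(1)=\pi b_i^2-4\pi Nb_i+o(1)$, which is the claimed identity. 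In the non-singular case the cutoff $s^i_n\leq\frac{3}{4}|x^i_n|$ keeps the vortex strictly outside $B_{s^i_n}(x^i_n)$, and since $R_n\mu^i_n=o(|x^i_n|)$ one may take $s_n=R_n\mu^i_n$, for which $|x^i_n|/s_n\to\infty$; the local type identity then yields $\pi\alpha_n^2(s^i_n;x^i_n)+o(1)=\pi b_i^2+o(1)$, with no $N$-term. This dichotomy between the two Pohozaev types is exactly what produces the correction $-4\pi Nb_i$ for a singular bubble and its absence for a non-singular one.
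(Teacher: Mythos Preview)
Your overall architecture is right --- establish fast decay on $\partial B_{R_n\mu^i_n}(x^i_n)$, show no energy leaks across the neck, then invoke the correct Pohozaev identity --- and your computation of the fast decay at the bubble scale and your application of Lemma \ref{lem:pohozaev} are both fine. But the middle step, which you yourself flag as the heart of the proof, is not actually carried out, and the tools you name are insufficient.

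Concretely: the upper bound in Proposition \ref{prop:bubble-tree}(5) only gives $\lambda_ne^{\bar u_n}|x|^{2N}e^{w_n}\leq C\,dist(x,\Sigma_n)^{-2}$ on the neck, and integrating this over $B_{\tau^i_n}\setminus B_{\beta^i_n}$ yields $C\log(\tau^i_n/\beta^i_n)\to\infty$, not $o(1)$. The oscillation lemma does not help here either; it converts pointwise decay to uniform decay on each dyadic ring, but cannot by itself improve the exponent. So ``combining Lemma \ref{lem:osc} with Proposition \ref{prop:bubble-tree}(5)'' does not close the gap.

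What the paper does instead is an ODE/monotonicity argument on the spherical average $\bar w_n(r):=\frac{1}{2\pi r}\int_{\partial B_r(x^i_n)}w_n$. Since the integrand defining $\alpha_n$ is nonnegative, $\alpha_n(r;x^i_n)$ is nondecreasing in $r$, so once $\alpha_n(R_n\mu^i_n;x^i_n)=b_i+o(1)$ one has $\alpha_n(r;x^i_n)\geq b_i+o(1)$ throughout the neck. Then
\[
\frac{d}{dr}\bar w_n(r)=\frac{1}{2\pi r}\int_{B_r(x^i_n)}\Delta w_n=-\frac{\alpha_n(r;x^i_n)}{r}\leq -\frac{b_i+o(1)}{r},
\]
so $\bar w_n(r)+(b_i-\varepsilon)\log r$ is decreasing on $[R_n\mu^i_n,\tau^i_n]$ (resp.\ $[R_n\mu^i_n,s^i_n]$). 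Since $b_i\geq 4(1+N)>2(1+N)$ in the singular case and $b_i\geq 4>2$ in the non-singular case, this monotonicity, together with Lemma \ref{lem:osc} and the obvious comparisons $|x|\sim|x-x^i_n|$ (singular) or $|x|\sim|x^i_n|$ (non-singular), propagates fast decay all the way to the outer boundary. Moreover, because in fact $b_i>2(1+N)+1$ (resp.\ $b_i>3$), one gets the stronger decay $\bar w_n(r)+(3+2N)\log r$ (resp.\ $\bar w_n(r)+3\log r$) decreasing, and this extra power of $r^{-1}$ is exactly what makes the neck integral $\int\lambda_ne^{\bar u_n}|x|^{2N}e^{w_n}$ bounded by a convergent $\int r^{-3}r\,dr$ and hence $o(1)$. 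This is the missing mechanism; once you insert it, the rest of your argument goes through as written.
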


\begin{proof}
We divide the proof into two cases.

\

\textbf{Case 1:} If $(x^i_n,\mu_n^i)$ is a singular bubble.

\

In this case, by Proposition \ref{prop:bubble-tree}, we know $\frac{|x^i_n|}{\mu^i_n}\leq C$ and $$\varphi_n(x):=w_n(x_n^i+\mu_n^ix)-w_n(x_n^i)\to\varphi(x)$$ in the sense of $C^2_{loc}(\R^2)$, where $\varphi(x)$ satisfies \eqref{equ:04}.

Now, we claim:
\begin{equation}
\alpha_n(\tau^i_n;x_n^i)=b_i+o(1).
\end{equation}

In fact, by the proof of Proposition \ref{prop:bubble-tree}, we can choose $R_n\to\infty$ such that $R_n\mu_n^i=o(1)\tau^i_n$, $w_n(x)$ has fast decay on $\partial B_{\mu_n^i R_n}(x_n^i)$ and
\begin{equation}
\alpha_n(\mu_n^i R_n;x_n^i)=b_i+o(1),
\end{equation} where $b_i\geq 4(1+N)$.

Denote
\begin{equation}\label{equ:def-1}
\bar{w}_n(r):=\frac{1}{2\pi r}\int_{\partial B_r(x^i_n)}w_n(x) d\sigma,
\end{equation} where $(r,\theta)$ is the polar coordinates system centered at point $x^i_n$. Then we have
\begin{align}\label{ineq:10}
\frac{d}{dr}\bar{w}_n=\frac{1}{2\pi r}\int_{ B_r(x_n^i)}\Delta w_n(x) dx= -\frac{\alpha_n(r;x^i_n)}{r}&\leq -\frac{b_i+o(1)}{r}\notag\\ &\leq -\frac{4+4N+o(1)}{r},\ \ \forall \ \mu_n^iR_n\leq r\leq \tau^i_n,
\end{align} which implies $$\bar{w}_n(r)+2(1+N)\log r,\ \ \forall \ \mu_n^iR_n\leq r\leq \tau^i_n$$ and $$\bar{w}_n(r)+3+2N\log r,\ \ \forall \ \mu_n^iR_n\leq r\leq \tau^i_n$$ are decreasing functions with respect to $r$.

This together with the facts that $w_n(x)$ has fast decay on $\partial B_{\mu_n^iR_n}(x^i_n)$, $\frac{|x^i_n|}{\mu^i_n}\leq C$ and $$\frac{1}{2}|x-x_n^i|\leq |x|\leq 2|x-x_n^i|, \ \ \forall \ \mu_n^iR_n\leq |x-x_n^i|\leq \tau^i_n,$$  taking arbitrary point $p_n\in \partial B_{\mu_n^iR_n}(x_n^i)$, for any $\mu_n^iR_n\leq |x-x_n^i|\leq \tau^i_n$, by Lemma \ref{lem:osc}, we immediately get that
\begin{align*}
&w_n(x)+2\log r+2N\log |x|+2\log\lambda_ne^{\bar{u}_n}\\&\leq \bar{w}_n(r)+2\log r+2N\log r+2\log\lambda_ne^{\bar{u}_n}+C,\ \ (\mbox{here }r=|x-x_n^i|)\\
&\leq \bar{w}_n(\mu_n^iR_n)+2\log (\mu_n^iR_n) +2N\log (\mu_n^iR_n)+2\log\lambda_ne^{\bar{u}_n}+C\\
&\leq w_n(p_n)+2\log dist(p_n,\Sigma_n) +2N\log |p_n|+2\log\lambda_ne^{\bar{u}_n}+C\leq -N_n+C,
\end{align*}  which implies that $w_n(x)$ has fast decay in $B_{\tau_n^i}(x^i_n)\setminus  B_{\mu_n^iR_n}(x^i_n)$.

Moreover, we have
\begin{align*}
\alpha_n(\tau^i_n;x_n^i)-\alpha_n(\mu_n^i R_n;x_n^i)&\leq \int_{B_{\tau^i_n}(x_n^i)\setminus B_{\mu_n^iR_n}(x_n^i)}\lambda_ne^{\bar{u}_n}|x|^{2N}e^{w_n}dx\\ &\leq C\int_{B_{\tau^i_n}(x_n^i)\setminus B_{\mu_n^iR_n}(x_n^i)}\lambda_ne^{\bar{u}_n}|x|^{2N}e^{\bar{w}_n}dx\\ &\leq C\int_{B_{\tau^i_n}(x_n^i)\setminus B_{\mu_n^iR_n}(x_n^i)}\lambda_ne^{\bar{u}_n}|x-x_n^i|^{2N}e^{\bar{w}_n}dx\\ &\leq Ce^{\bar{w}_n(\mu_n^iR_n)+(3+2N)\log (\mu_n^iR_n)}\int_{B_{\tau^i_n}(x^i_n)\setminus B_{\mu_n^iR_n}(x^i_n)}\lambda_ne^{\bar{u}_n}|x-x_n^i|^{-3}dx \\ &\leq Ce^{\bar{w}_n(\mu_n^1R_n)+2(1+N)\log (\mu_n^1R_n)}\\ &\leq Ce^{w_n(p_n)+2\log dist(p_n,\Sigma_n) +2N\log |p_n|}=o(1),
\end{align*} where the second and last inequality follows from Lemma \ref{lem:osc}.

Thus, we get $$\alpha_n(\tau^i_n;x_n^i)=\alpha_n(\mu_n^i R_n;x_n^i)+o(1)=b_i+o(1).$$ Moreover, using global type Pohozaev identity of Lemma \ref{lem:pohozaev} in $B_{R_n\mu^i_n}(x^i_n)$, we get
\begin{align*}
&\int_{B_{\tau^i_n}(x^i_n)}2\lambda_ne^{\bar{u}_n}|x|^{2N}e^{\zeta(x)}e^{w_n}[1-\frac{1}{2}e^{\bar{u}_n}|x|^{2N}e^{\zeta(x)}e^{w_n}]dx \\&=\int_{B_{R_n\mu^i_n}(x^i_n)}2\lambda_ne^{\bar{u}_n}|x|^{2N}e^{\zeta(x)}e^{w_n}[1-\frac{1}{2}e^{\bar{u}_n}|x|^{2N}e^{\zeta(x)}e^{w_n}]dx =\pi b_i^2-4\pi Nb_i+o(1).
\end{align*}

\

\textbf{Case 2:} If $(x^i_n,\mu_n^i)$ is not a singular bubble.

\

In this case, we know $\frac{|x_n^i|}{\mu_n^i}\to \infty$. By Proposition \ref{prop:bubble-tree}, we know $$\varphi_n(x):=w_n(x_n^i+\mu_n^ix)-w_n(x_n^i)\to\varphi(x)$$ in the sense of $C^2_{loc}(\R^2)$, where $\varphi(x)$ satisfies \eqref{equ:07}. We choose $R_n\to\infty$ such that $R_n\mu_n^i=o(1)\tau_n^i$ and $R_n\mu_n^i=o(1)|x_n^i|$, $w_n(x)$ has fast decay on $\partial B_{\mu_n^i R_n}(x_n^i)$ and
\begin{equation}
\alpha_n(\mu_n^i R_n;x_n^i)=b_i+o(1),
\end{equation} where $b_i\geq 4$.

Denote $s_n^i:=\min\{\tau_n^i,\frac{3}{4}|x_n^i|\}$, by \eqref{ineq:10}, we have
\begin{align}
\frac{d}{dr}\bar{w}_n=\frac{1}{2\pi r}\int_{ B_r(x_n^i)}\Delta w_n(x) dx= -\frac{\alpha_n(r;x^i_n)}{r}&\leq -\frac{b^i+o(1)}{r}\notag\\ &\leq -\frac{4+o(1)}{r},\ \ \forall \ \mu_n^iR_n\leq r\leq s^i_n,
\end{align} which implies $$\bar{w}_n(r)+3\log r,\ \ \forall \ \mu_n^iR_n\leq r\leq s^i_n$$ is a decreasing function with respect to $r$. Noting that for any $\mu_n^iR_n\leq |x-x_n^i|\leq s^i_n$, there holds $$\frac{1}{4}|x_n^i|\leq |x|\leq 2|x_n^i|.$$ Taking any point $p_n\in \partial B_{\mu_n^iR_n}(x_n^i)$, we have $$\bar{w}_n(r)+3\log r+2N\log |x|\leq \bar{w}_n(\mu_n^iR_n)+3\log (\mu_n^iR_n) +2N\log |p_n|+C, \ \forall \ \mu_n^iR_n\leq r\leq s^i_n,$$ which implies that for any $\mu_n^iR_n\leq |x-x_n^i|\leq s^i_n$, there holds
\begin{align*}
&w_n(x)+2\log r+2N\log |x|+2\log\lambda_ne^{\bar{u}_n}\\ &\leq \bar{w}_n(r)+2\log r+2N\log|x|+2\log\lambda_ne^{\bar{u}_n}+C\\
&\leq \bar{w}_n(\mu_n^iR_n)+2\log (\mu_n^iR_n) +2N\log |p_n|+\log\frac{\mu_n^iR_n}{r}+2N\log\frac{|x|}{|p_n|}+2\log\lambda_ne^{\bar{u}_n}+C\\
&\leq \bar{w}_n(\mu_n^iR_n)+2\log (\mu_n^iR_n) +2N\log |p_n|+2\log\lambda_ne^{\bar{u}_n}+C\leq -N_n+C.
\end{align*}
Moreover, we have
\begin{align*}
\alpha_n(s^i_n;x_n^i)-\alpha_n(\mu_n^i R_n;x_n^i) &\leq C\int_{B_{s^i_n}(x_n^i)\setminus B_{\mu_n^iR_n}(x_n^i)}\lambda_ne^{\bar{u}_n}|x|^{2N}e^{\bar{w}_n}dx\\ &\leq C|x_n^i|^{2N}\lambda_ne^{\bar{u}_n} \int_{B_{s^i_n}(x_n^i)\setminus B_{\mu_n^iR_n}(x_n^i)}e^{\bar{w}_n}dx\\ &\leq C|x_n^i|^{2N}\lambda_ne^{\bar{u}_n} e^{\bar{w}_n(\mu_n^iR_n)+3\log (\mu_n^iR_n)}\int_{B_{s^i_n}(x^i_n)\setminus B_{\mu_n^iR_n}(x^i_n)}|x-x_n^i|^{-3}dx \\ &\leq C|x_n^i|^{2N}\lambda_ne^{\bar{u}_n} e^{\bar{w}_n(\mu_n^iR_n)+2\log (\mu_n^iR_n)}\\ &= C\left(\frac{|x_n^i|}{|p_n|}\right)^{2N}\lambda_ne^{\bar{u}_n} e^{\bar{w}_n(\mu_n^iR_n)+2\log (\mu_n^iR_n)+2N\log |p_n|}=o(1).
\end{align*}

Thus, we get $$\alpha_n(s^i_n;x_n^i)=\alpha_n(\mu_n^i R_n;x_n^i)+o(1)=b_i+o(1).$$ Moreover, using local type Pohozaev identity of Lemma \ref{lem:pohozaev} in $B_{R_n\mu^i_n}(x^i_n)$, we get
\begin{align*}
&\int_{B_{s^i_n}(x^i_n)}2\lambda_ne^{\bar{u}_n}|x|^{2N}e^{\zeta(x)}e^{w_n}[1-\frac{1}{2}e^{\bar{u}_n}|x|^{2N}e^{\zeta(x)}e^{w_n}]dx \\&=\int_{B_{R_n\mu^i_n}(x^i_n)}2\lambda_ne^{\bar{u}_n}|x|^{2N}e^{\zeta(x)}e^{w_n}[1-\frac{1}{2}e^{\bar{u}_n}|x|^{2N}e^{\zeta(x)}e^{w_n}]dx =\pi b_i^2+o(1).
\end{align*}

We proved this proposition.

\end{proof}

\

\section{Proof of theorems}\label{sec:proof}

\

In this section, we will prove Theorem \ref{thm:main-1}, Theorem \ref{thm:main-2} and Theorem \ref{thm:main} based on a induction argument on the numbers of bubbles. Here, we need some information of positional structure of bubbles which will be got by Pohozaev type identities. As a corollary of Theorem \ref{thm:main}, the proof of Theorem \ref{thm:main-3} will also be given in this section.

\

Before proving our main theorems and in order to describe the induction process better, we first give some definitions for positional structure of bubbling disks.

\begin{defn}
We call $B_{s_n}(x_n)$ a bubbling disk if $w_n(x)$ has fast decay on $\partial B_{s_n}(x_n)$ and $$\alpha_n(s_n;x_n)=\sum_{i=k_1}^{k_2}b_i+o(1),$$ where $b_i$ is the energy of the bubble $\varphi^i(x)$ in Proposition \ref{prop:bubble-tree}.
\end{defn}

\

\begin{defn}
Let $B_{s_n^i}(y^i_n),\ i=1,...,m$ be bubbling disks which are pairwise disjoint. Let $\{i_1,i_2,...,i_I\}\subset \{1,2,...,m\},\ I\geq 2,$ be a subset and $B_{r_n}(x_n)$ be a ball such that $$\cup_{i=i_1}^{i_I}B_{s_n^i}(y^i_n)\subset B_{r_n}(x_n),\ \ r_n=o(1)dist(\partial B_{r_n}(x_n), \{y_n^1,...,y_n^m\}\setminus \{y_n^{i_1},...,y_n^{i_I}\}).$$ We call $B_{s_n^{i_j}}(y^{i_j}_n), \ 1\leq j\leq I$, a locally smallest bubbling disk in $B_{r_n}(x_n)$ if there  exists a positive constant $C>1$ such that $$\limsup_n\frac{s_n^{i_k}}{s_n^{i_j}}\geq C^{-1},\ k=1,...,I.$$

\end{defn}

\begin{defn}
Let $B_{s_n^i}(y^i_n),\ i=1,...,m$ be  bubbling disks which are pairwise disjoint. Denote $S_n:=\{y_n^1,...,y_n^m\}$. For a point $x_n$, we call $B_{s_n^j}(y^j_n)$ a nearest bubbling disk corresponding to $x_n$, if $$dist(x_n,S_n\setminus \{x_n\})=|x_n-y_n^j|.$$

For a bubbling disk $B_{s^i_n}(y^i_n)$, we call $B_{s_n^j}(y^j_n)$ is a almost nearest bubbling disk corresponding to $B_{s^i_n}(y^i_n)$ if $$1\leq \frac{|y^j_n-y^i_n|}{dist(y_n^i,S_n\setminus \{y_n^i\})}\leq C.$$
\end{defn}

\

\begin{rem}\label{rem:01}
By above definitions, we immediately have the following facts that
\begin{itemize}
\item[(1)] For $x_n^i\in\Sigma_n$, by Proposition \ref{prop:01}, we know $B_{s_n^i}(x_n^i)$ is a bubbling disk, where $s_n^i:=\min\{\tau_n^i,\frac{3}{4}|x_n^i|\}$ and $\tau_n^i:=\frac{1}{2}dist(x_n^i,\Sigma_n\setminus \{x_n^i\})$.

\

\item[(2)] Let $B_{\tau_n^i}(y^i_n),\ i=1,...,m$ be bubbling disks where $\tau_n^i:=\frac{1}{2}dist(y^i_n,S_n\setminus \{y_n^i\})$ and $S_n:=\{y_n^1,...,y_n^m\}$. For $B_{\tau_n^i}(y^i_n)$, there exist some almost nearest bubbling disks (at least one which is the nearest one), denoted by $\{B_{\tau_n^{i_j}}(y^{i_j}_n)\}_{j=1}^J,\ i_1,...,i_j\in \{1,...,i-1,i+1,...,m\}$. Then there exists a positive constant $C$ such that $$\bigcup_{j=1}^JB_{\tau_n^{i_j}}(y^{i_j}_n)\subset B_{C\tau_n^i}(y^i_n),\ \ \tau_n^i=o(1)dist(\partial B_{C\tau_n^i}(y^i_n),S_n\setminus \{y_n^i,y_n^{i_1},...,y_n^{i_J}\}).$$

\

\item[(3)]If $B_{\tau_n^i}(y^i_n)$ is a locally smallest bubbling disk in $B_{C\tau_n^i}(y^i_n)$ and $B_{\tau_n^j}(y^j_n)\ (j\in\{i_1,...,i_J\})$ is a almost nearest bubbling disk corresponding to $B_{\tau^i_n}(y^i_n)$, then we have $$C^{-1}\tau_n^i\leq \tau_n^j=\frac{1}{2}dist(y^j_n,S_n\setminus \{y_n^j\})\leq \frac{1}{2}|y_n^j-y_n^i|\leq C\tau_n^i.$$
\end{itemize}
\end{rem}

\

Next, we prove two propositions to simplify the structure of bubbling disks.
\begin{prop}\label{prop:02}
Let $x_n^1\in\Sigma_n$ and $(x_n^1,\mu_n^1)$ be not a singular bubble.  Then either $\Sigma_n=\{x_n^1\}$ or $\lim\frac{|x_n^1|}{\tau_n^1}\leq C$.
\end{prop}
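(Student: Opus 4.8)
The plan is to argue by contradiction: suppose $\Sigma_n \neq \{x_n^1\}$ and yet $\lim_{n\to\infty} |x_n^1|/\tau_n^1 = \infty$. Since $(x_n^1,\mu_n^1)$ is a non-singular bubble, Proposition \ref{prop:bubble-tree} gives $|x_n^1|/\mu_n^1\to\infty$ and $\varphi^1$ satisfies \eqref{equ:07}. Because $\tau_n^1 = o(|x_n^1|)$, we have $s_n^1 = \min\{\tau_n^1,\tfrac34|x_n^1|\} = \tau_n^1$ for $n$ large, so Proposition \ref{prop:01}(2) applies: $w_n$ has fast decay on $\partial B_{\tau_n^1}(x_n^1)$ and $\alpha_n(\tau_n^1;x_n^1) = b_1 + o(1)$ with $b_1\ge 4$. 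The crucial structural observation is that the whole local cluster around $x_n^1$ sits far from the vortex: every $y$ with $|y - x_n^1|\le C\tau_n^1$ satisfies $|y| \ge |x_n^1| - C\tau_n^1 = |x_n^1|(1-o(1))$, so $|y|^{2N}$ is essentially the constant weight $|x_n^1|^{2N}$ there and $0\notin B_{C\tau_n^1}(x_n^1)$.

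Next I would enclose $x_n^1$ together with its almost nearest bubbling disks in a single ball far from the vortex. By Remark \ref{rem:01}(2) there is a ball $B_{r_n}(x_n^1)$, with $r_n$ comparable to $\tau_n^1$, containing $x_n^1$ and at least its nearest neighbour $x_n^j$ (hence at least two bubbles, since $|x_n^1 - x_n^j| = 2\tau_n^1$) and satisfying $\tau_n^1 = o(1)\,\mathrm{dist}(\partial B_{r_n}(x_n^1), \Sigma_n\setminus \mathcal C)$, where $\mathcal C$ indexes the enclosed cluster. All bubbles of $\mathcal C$ are non-singular by Proposition \ref{prop:bubble-tree}(5), since the only possible singular bubble must contain the vortex in its disk and so cannot lie in a cluster with $|x_n^i|\gg\tau_n^1$. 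A neck analysis combining the fast decay Lemma \ref{lem:fast-decay}, the oscillation Lemma \ref{lem:osc} and Proposition \ref{prop:01}(2) applied to each bubble then shows that $w_n$ has fast decay on $\partial B_{r_n}(x_n^1)$ and that the mass carried by the necks between the disks is $o(1)$; this yields the additivity $\alpha_n(r_n;x_n^1) = \sum_{i\in\mathcal C} b_i + o(1)$ together with $\int_{B_{r_n}(x_n^1)} 2\lambda_n e^{\bar u_n}|x|^{2N}e^{\zeta} e^{w_n}[1-\tfrac12 e^{\bar u_n}|x|^{2N}e^{\zeta} e^{w_n}]\,dx = \sum_{i\in\mathcal C}\pi b_i^2 + o(1)$, with $|\mathcal C|\ge 2$ and each $b_i \ge 4$.

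Finally, since $0\notin B_{r_n}(x_n^1)$ and $|x_n^1|/r_n\to\infty$, the local type Pohozaev identity of Lemma \ref{lem:pohozaev}(1) gives $\int_{B_{r_n}(x_n^1)} 2\lambda_n e^{\bar u_n}|x|^{2N}e^{\zeta} e^{w_n}[1-\tfrac12 e^{\bar u_n}|x|^{2N}e^{\zeta} e^{w_n}]\,dx = \pi\,\alpha_n(r_n;x_n^1)^2 + o(1)$. Equating the two expressions and cancelling $\pi$ yields $\sum_{i\in\mathcal C} b_i^2 = \bigl(\sum_{i\in\mathcal C} b_i\bigr)^2 + o(1)$; but the cross terms force $\bigl(\sum_{i\in\mathcal C} b_i\bigr)^2 - \sum_{i\in\mathcal C} b_i^2 = 2\sum_{i<j} b_i b_j \ge 32 > 0$ because $|\mathcal C|\ge 2$ and each $b_i\ge 4$, a contradiction. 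Hence either $\Sigma_n = \{x_n^1\}$ or $|x_n^1|/\tau_n^1$ stays bounded. I expect the main obstacle to be the additivity step, namely showing that there is no energy loss in the necks between the clustered bubbles for both the mass $\alpha_n$ and the Pohozaev energy, which is precisely where Lemma \ref{lem:fast-decay} and the control of the singular term $2N\log|x|$ (harmless here only because the cluster avoids the vortex) are essential. It is worth noting that the argument also explains the dichotomy in the statement: had the cluster surrounded the origin, that is $|x_n^1|\lesssim\tau_n^1$, the global type identity of Lemma \ref{lem:pohozaev}(2) would instead produce $\sum b_i^2 = \bigl(\sum b_i\bigr)^2 - 4N\sum b_i$, i.e. \eqref{equ:09}, which admits multi-bubble solutions and is therefore not contradictory.
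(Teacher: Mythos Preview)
Your overall strategy coincides with the paper's: argue by contradiction assuming $\Sigma_n\neq\{x_n^1\}$ and $|x_n^1|/\tau_n^1\to\infty$, enclose $x_n^1$ with its almost-nearest neighbours in $B_{\Lambda\tau_n^1}(x_n^1)$ (which lies far from the vortex, so all enclosed bubbles are non-singular), prove additivity of the mass and of the Pohozaev integral over this cluster, and then compare $\sum b_i^2$ with the local Pohozaev identity on the big ball to get $\sum_{i\in\mathcal C}b_i^2=(\sum_{i\in\mathcal C}b_i)^2$, impossible for $|\mathcal C|\ge 2$.

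The gap you flag in the additivity step is genuine, and invoking Lemma~\ref{lem:fast-decay}, Lemma~\ref{lem:osc} and Proposition~\ref{prop:01} does not close it as stated. The neck estimate on $\Omega_n=B_{\Lambda\tau_n^1}(x_n^1)\setminus\bigcup_i B_{\tau_n^i}(x_n^i)$ requires all enclosed $\tau_n^i$ to be comparable to $\tau_n^1$: only then does $\operatorname{osc}_{\Omega_n}\log\operatorname{dist}(x,\{x_n^1,\dots,x_n^I\})\le C$, which combined with fast decay on each $\partial B_{\tau_n^i}(x_n^i)$ yields fast decay throughout $\Omega_n$ and $o(1)$ mass there (the paper's Case~1). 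If some neighbour has $\tau_n^i/\tau_n^1\to 0$ (Case~2), this oscillation bound fails and Lemma~\ref{lem:fast-decay} alone does not bridge the long annulus from $\partial B_{\tau_n^i}(x_n^i)$ out to scale $\tau_n^1$. The paper does \emph{not} attempt additivity at scale $\tau_n^1$ in this case; instead it descends: pick a locally smallest $x_n^{2}$ among the neighbours of $x_n^1$, then take the almost-nearest neighbours of $x_n^{2}$. By Remark~\ref{rem:01}(3) those have $\tau_n^j\sim\tau_n^{2}$, the sub-cluster still avoids the origin, and it contains at least two points since $\tau_n^{2}=o(\tau_n^1)$ forces $I\ge 3$. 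One then reruns the Case~1 argument centred at $x_n^2$ at scale $\tau_n^{2}$ and obtains the same contradiction there. You should insert this case split rather than appeal to a generic neck analysis.
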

\begin{proof}
Suppose $\Sigma_n$ contains more than two points, we will show $\lim\frac{|x_n^1|}{\tau_n^1}\leq C$. We prove by a contradiction argument. If not, then passing to a subsequence, there holds $\lim\frac{|x_n^1|}{\tau_n^1}=\infty$. Then by Proposition \ref{prop:01}, we know that $B_{\tau_n^1}(x^1_n)$ is a bubbling disk, i.e. $s_n^1=\tau_n^1$.

Since, there are more than two bubbles, for $B_{\tau_n^1}(x_n^1)$, there must be a nearest bubbling disk denoted by $B_{\tau_n^2}(x_n^2)$ and maybe some almost nearest bubbling disks denoted by $B_{\tau_n^i}(x_n^i),\ i=3,...,I$. It is easy to see that $$\frac{1}{2}|x_n^2-x_n^1|=\tau_n^1=o(1)|x_n^1|,\ \ |x_n^i-x_n^1|\leq C|x_n^2-x_n^1|\leq C\tau_n^1=o(1)|x_n^1|,\ i=2,...,I,$$ and  $$\cup_{i=1}^{I}B_{\tau_n^i}(x_n^i)\subset B_{\Lambda\tau_n^1}(x_n^1)$$ for some positive constant $\Lambda>0$. Moreover, since $$\tau_n^i\leq 2|x_n^i-x_n^1|=o(1)|x_n^1|=o(1)|x_n^i|,\ \ i=2,...,I,$$ we know $\{(x_n^i,\mu_n^i)\}_{ i=1}^I$ are not singular bubbles and $$\tau_n^1=o(1)dist(\partial B_{\Lambda\tau_n^1}(x_n^1),\Sigma_n\setminus \{x_n^1,...,x_n^I\})$$ according to the definition of almost nearest bubbling disk.

\

Since $2\tau_n^i\leq |x_n^i-x_n^1|\leq C\tau_n^1,\ \ i=2,...,I$, we shall consider following two cases:

\

\textbf{Case 1:} if $\lim\frac{\tau_n^i}{\tau_n^1}\geq C^{-1},\ i=2,...,I$, for some positive constant $C>0$.

\

In this case, we have $C^{-1}\leq\frac{\tau_n^1}{\tau_n^i}\leq C,\ i=2,...,I$. Now, we claim: $w_n(x)$ has fast decay on $\partial B_{\Lambda\tau_n^1}(x_n^1)$ and
\begin{equation}\label{equ:14}
\alpha_n(\Lambda_n\tau_n^1;x_n^1)=\sum_{i=1}^{I}\alpha_n(\tau_n^i;x_n^i)+o(1)=\sum_{i=1}^{I}b_i+o(1).
\end{equation}

For \eqref{equ:14}, we  just need to show the first equality because the second one follows from Proposition \ref{prop:01}.

In fact, denoting $\Omega_n:=B_{\Lambda\tau_n^1}(x_n^1)\setminus \left(\cup_{i=1}^{I}B_{\tau_n^i}(x_n^i)\right)$, by Lemma \ref{lem:osc}, we firstly have $$osc_{\Omega_n}w_n(x)\leq C.$$ Secondly, for any $x\in\Omega_n$, it is easy to see that $$\frac{1}{2}|x_n^1|\leq |x_n^1|-|x-x_n^1|\leq |x|\leq |x-x_n^1|+|x_n^1|\leq 2|x_n^1|$$ and $$C^{-1}\tau_n^1\leq \tau_n^i\leq |x-x_n^i|\leq |x-x_n^1|+|x_n^1-x_n^i|\leq C\tau_n^1,\ i=1,...,I,$$ which implies $$osc_{\Omega_n}\log dist(x,\{x^1_n,...,x_n^I\})+osc_{\Omega_n}\log|x|\leq C.$$ Since $w_n(x)$ has fast decay on $\partial B_{\tau_n^i}(x_n^i)$, we get that $w_n(x)$ has fast decay in $\Omega_n$ and
\begin{align}\label{ineq:13}
\int_{\Omega_n}\lambda_ne^{\bar{u}_n}|x|^{2N} e^{\zeta}e^{w_n}dx&\leq o(1)\int_{\Omega_n}\frac{1}{|dist(x,\{x_n^1,x_n^2,...,x_n^{I}\})|^2}dx\notag\\
&\leq  o(1)\sum_{i=1}^{I}\int_{\Omega_n}\frac{1}{|x-x_n^i|^2}dx=o(1),
\end{align} which implies \eqref{equ:14} immediately.

Now we estimate the integration $$2\int_{B_{\Lambda\tau_n^1}(x_n^1)}\lambda_ne^{\bar{u}_n}|x|^{2N}e^{\zeta} e^{w_n}(1-\frac{1}{2}e^{\bar{u}_n}|x|^{2N}e^{\zeta}  e^{w_n})dx.$$

On one hand, by using \eqref{ineq:13} and local type Pohozaev identity of Lemma \ref{lem:pohozaev} in $B_{\tau_n^i}(x_n^i),\ i=1,...,I$, we obtain
\begin{align*}
&2\int_{B_{\Lambda\tau_n^1}(x_n^1)}\lambda_ne^{\bar{u}_n}|x|^{2N} e^{\zeta}e^{w_n}(1-\frac{1}{2}e^{\bar{u}_n}|x|^{2N}e^{\zeta} e^{w_n})dx\\&=2\sum_{i=1}^I\int_{B_{\tau_n^i}(x_n^i)}\lambda_ne^{\bar{u}_n}|x|^{2N}e^{\zeta} e^{w_n}(1-\frac{1}{2}e^{\bar{u}_n}|x|^{2N}e^{\zeta} e^{w_n})dx=\pi\sum_{i=1}^{I}\alpha_n^2(x_n^i;\tau_n^i)+o(1)=\pi\sum_{i=1}^{I}b_i^2+o(1).
\end{align*}

On the other hand, using local type Pohozaev identity of Lemma \ref{lem:pohozaev} in $B_{\Lambda\tau_n^1}(x_n^1)$, we have
\begin{equation*}
2\int_{B_{\Lambda\tau_n^1}(x_n^1)}\lambda_ne^{\bar{u}_n}|x|^{2N} e^{w_n}(1-\frac{1}{2}e^{\bar{u}_n}|x|^{2N} e^{w_n})dx=\pi\alpha_n^2(x_n^1;\Lambda\tau_n^1)+o(1)=\pi\left(\sum_{i=1}^{I}b_i\right)^2+o(1).
\end{equation*}
Then we get $$\sum_{i=1}^{I}b_i^2=\left(\sum_{i=1}^{I}b_i\right)^2,$$ which  is a contradiction because we have at least two bubbles, i.e. $I\geq 2$.

\

\textbf{Case 2:} There exists $i\in\{2,...,I\}$ such that $\lim\frac{\tau_n^i}{\tau_n^1}=0$.

\

In this case, there exists a locally smallest bubbling disk (denoted by $B_{\tau_n^{2}}(x_n^2)$) in $B_{\Lambda\tau_n^1}(x_n^1)$, such that $$\frac{\tau_n^1}{\tau_n^{2}}\to\infty,\ \ \frac{\tau_n^j}{\tau_n^{2}}\geq C^{-1}>0,\ j\in\{2,...,I\}.$$ Then we know there must be more than three bubbles, i.e. $I\geq 3$. Otherwise, $\tau_n^2=\tau_n^1$ which is a contradiction.

Furthermore, for $B_{\tau_n^{2}}(x_n^{2})$, there must exist a nearest bubble (denoted by $B_{\tau_n^{3}}(x_n^{3})$) and maybe some almost nearest bubbles denoted by $B_{\tau_n^{4}}(x_n^{4}),...,B_{\tau_n^{I}}(x_n^{I})$. By Remark \ref{rem:01}, we get $$|x_n^{3}-x_n^{2}|=2\tau_n^{2},\ \ 1\leq \frac{|x_n^{j}-x_n^{2}|}{|x_n^{3}-x_n^{2}|}\leq C,\ \ C^{-1}\tau_n^{2}\leq\tau_n^j\leq \frac{1}{2} |x_n^j-x_n^{2}|  \leq C\tau_n^{2},\  j=3,...,I$$ and there exists a constant $\Lambda_1>100$ such that
\begin{align*}
\cup_{j=2}^{I}B_{\tau_n^j}(x_n^j)\subset B_{\Lambda_1\tau_n^{2}}(x_n^{2}),\ \ \tau_n^2=o(1)dist(\partial B_{\Lambda_1\tau_n^{2}}(x_n^{2}),\Sigma_n\setminus \Sigma_n^1),
\end{align*} where $\Sigma_n^1:=\{x_n^{2},...,x_n^{I}\}$.

Similarly to $case \ 1$, we can prove that $w_n(x)$ has fast decay on $\partial B_{\Lambda_1\tau_n^{2}}(x_n^{2})$ and $$\alpha_n(\Lambda_1\tau_n^{2};x_n^{2})=\sum_{j=2}^{I}b_j+o(1).$$ Moreover, by using local Pohozaev identity in each bubbling disk $B_{\tau_n^j}(x_n^j),\ j=2,...,I$ and $ B_{\Lambda_1\tau_n^{2}}(x_n^{2})$, we obtain $$\sum_{j=2}^{I}b_j^2=\left(\sum_{j=2}^{I}b_j\right)^2+o(1),$$ which is also a contradiction because $\Sigma_n^1$ contains at least two points, i.e. $I\geq 3$.

We proved proposition.
\end{proof}

\

\begin{prop}\label{prop:03}
Let $x_n^1\in\Sigma_n$ and $(x_n^1,\mu_n^1)$ be a singular bubble.  Then $\Sigma_n=\{x_n^1\}$, i.e. there is only one bubble.
\end{prop}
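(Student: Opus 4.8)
The plan is to argue by contradiction: assuming $\Sigma_n$ contains a point other than $x_n^1$, I would extract an impossible numerical relation between the masses from the two Pohozaev identities of Lemma \ref{lem:pohozaev}. The whole argument runs parallel to the proof of Proposition \ref{prop:02}; the decisive new feature is that the singular bubble sits essentially at the origin, so that the enclosing disk is of \emph{global} type and contributes the extra linear term $-4\pi N\alpha_n$.

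First I would record the geometry. By Proposition \ref{prop:bubble-tree}(5), $x_n^1$ is the unique point of $\Sigma_n$ satisfying the smallness condition of case $(4\text{-}1)$; moreover $0\in B_{\beta_n^1}(x_n^1)$, $|x_n^1|=O(\mu_n^1)$, and $|x_n^j|/|x_n^1|\to\infty$ for every $j\neq 1$. In particular $\tau_n^1=\frac12 dist(x_n^1,\Sigma_n\setminus\{x_n^1\})\gg |x_n^1|$, so that $|x_n^1|/(\Lambda\tau_n^1)\to 0$ for any fixed $\Lambda$, and every other bubble is non-singular. By Proposition \ref{prop:01}(1) the singular disk $B_{\tau_n^1}(x_n^1)$ has fast decay on its boundary with $\alpha_n(\tau_n^1;x_n^1)=b_1+o(1)$, where $b_1\ge 4(1+N)$, and $\int_{B_{\tau_n^1}(x_n^1)}2\lambda_ne^{\bar u_n}|x|^{2N}e^{\zeta}e^{w_n}[1-\tfrac12 e^{\bar u_n}|x|^{2N}e^{\zeta}e^{w_n}]\,dx=\pi b_1^2-4\pi Nb_1+o(1)$.

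Next I would form the enclosing disk exactly as in Proposition \ref{prop:02}. Let $B_{\tau_n^2}(x_n^2),\dots,B_{\tau_n^I}(x_n^I)$ be the nearest and almost nearest bubbling disks to $x_n^1$, all contained in $B_{\Lambda\tau_n^1}(x_n^1)$ for a fixed $\Lambda$. If some $\tau_n^j/\tau_n^1\to 0$, I would descend to a locally smallest cluster: since it lives at a scale $o(1)\tau_n^1$ and at distance $\gtrsim\tau_n^1$ from $0$, it consists only of non-singular bubbles and its enclosing disk is of local type, so the argument of Case $2$ of Proposition \ref{prop:02} already produces $\sum b^2=(\sum b)^2$, a contradiction. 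Hence I may assume all $\tau_n^j$ are comparable to $\tau_n^1$. Then Lemma \ref{lem:osc} controls the oscillation of $w_n$ and, exactly as in \eqref{ineq:13} and \eqref{equ:14} (with Lemma \ref{lem:fast-decay}), rules out energy loss on the neck $\Omega_n=B_{\Lambda\tau_n^1}(x_n^1)\setminus\bigcup_{i=1}^I B_{\tau_n^i}(x_n^i)$, giving fast decay on $\partial B_{\Lambda\tau_n^1}(x_n^1)$ and $\alpha_n(\Lambda\tau_n^1;x_n^1)=\sum_{i=1}^I b_i+o(1)$. Combining additivity of the Pohozaev integral with the local identity $\pi b_j^2+o(1)$ for each non-singular disk (Proposition \ref{prop:01}(2)), the singular identity above, and the vanishing neck integral yields the value $\pi b_1^2-4\pi Nb_1+\pi\sum_{j=2}^I b_j^2+o(1)$ over $B_{\Lambda\tau_n^1}(x_n^1)$; on the other hand, since $|x_n^1|/(\Lambda\tau_n^1)\to 0$, the global identity of Lemma \ref{lem:pohozaev}(2) gives $\pi(\sum b_i)^2-4\pi N(\sum b_i)+o(1)$. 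Writing $\tilde S=\sum_{j=2}^I b_j>0$ and cancelling, these two evaluations reduce to $b_1\tilde S+\sum_{2\le k<l\le I}b_kb_l=2N\tilde S$, whence $b_1\le 2N$; this contradicts $b_1\ge 4(1+N)>2N$, so no second bubble exists and $\Sigma_n=\{x_n^1\}$.

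The main obstacle is the organizational middle step: arranging the surrounding bubbles into a single disk and proving there is no energy loss on the necks, so that the global Pohozaev identity on $B_{\Lambda\tau_n^1}(x_n^1)$ may legitimately be split into the singular contribution plus the non-singular ones. Once this bookkeeping is in place the inequality $b_1\le 2N$ is immediate, and it is precisely the $-4\pi N\alpha_n$ term — absent in the pure Liouville setting and present here only because the singular bubble is centered at the vortex — that forces the contradiction.
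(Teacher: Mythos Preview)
Your proposal is correct and follows essentially the same route as the paper: contradiction, nearest/almost-nearest disks around $x_n^1$, the two-case split according to whether the $\tau_n^j$ are comparable to $\tau_n^1$, and in the comparable case the comparison of the summed local contributions $\pi b_1^2-4\pi Nb_1+\pi\sum_{j\ge 2}b_j^2$ with the global identity $\pi(\sum b_i)^2-4\pi N\sum b_i$ to force $b_1\le 2N$. Your handling of the degenerate case (descending to a locally smallest non-singular cluster, which is far from $0$ and hence of local type) is exactly what the paper means by ``a almost same argument as in case~2 of Proposition~\ref{prop:02}''.
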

\begin{proof}
We prove by a contradiction argument. If not, $\Sigma_n=\{x_n^1,...,x_n^m\}$ have more than two points.    In this case, by Proposition \ref{prop:bubble-tree}, we know $|x_n^1|=o(1)|x_n^i|,\ i=2,...,m$. Then $$\tau_n^i\leq \frac{1}{2}|x_n^i-x_n^1|\leq \frac{3}{4}|x_n^i|,\ i=2,...,m.$$ By Proposition \ref{prop:01}, we know $B_{\tau_n^i}(x_n^i),\ i=1,...,m$ are all bubbling disks, i.e. $s_n^i=\tau_n^i$.

For bubbling disk $B_{\tau_n^1}(x^1_n)$, we can find a nearest bubble (denoted by $(x_n^2,\tau_n^2)$) and maybe almost nearest bubbles (denoted by $(x_n^i,\tau_n^i),\ i=3,...,I$). By definition, we know  $$1 \leq \frac{|x^i_n-x^1_n|}{|x^2_n-x^1_n|}\leq C,\ \ \tau_n^i\leq \frac{1}{2}|x_n^i-x_n^1|\leq C \tau_n^1,\ i=2,...,I$$ and   $$\cup_{i=1}^{I}B_{\tau_n^i}(x_n^i)\subset B_{\Lambda\tau_n^1}(x_n^1)$$ for some positive constant $\Lambda>0$. Moreover, according to the definition of almost nearest bubbling disk, we know  $$\tau_n^1=o(1)dist(\partial B_{\Lambda\tau_n^1}(x_n^1),\Sigma_n\setminus \{x_n^1,...,x_n^I\}).$$

\

Since $ \lim\frac{\tau_n^i}{\tau_n^1}\leq C,\ i=2,...,I$, we shall consider the following cases:

\

\textbf{Case I:} if $\lim\frac{\tau_n^i}{\tau_n^1}\geq C^{-1},\ i=2,...,I$, for some positive constant $C$.

\

In this case, we have $C^{-1}\leq\frac{\tau_n^i}{\tau_n^1}\leq C,\ i=2,...,I$. Now, we claim: $w_n(x)$ has fast decay on $\partial B_{\Lambda\tau_n^1}(x_n^1)$ and
\begin{equation}\label{equ:13}
\alpha_n(\Lambda_n\tau_n^1;x_n^1)=\sum_{i=1}^{I}\alpha_n(\tau_n^i;x_n^i)+o(1)=\sum_{i=1}^{I}b_i+o(1).
\end{equation}
In fact, denoting $\Omega_n:=B_{\Lambda\tau_n^1}(x_n^1)\setminus \left(\cup_{i=1}^{I}B_{\tau_n^i}(x_n^i)\right)$, by Lemma \ref{lem:osc}, we firstly have $$osc_{\Omega_n}w_n(x)\leq C.$$ Secondly, for any $x\in\Omega_n$, it is easy to see that $$\frac{1}{2}\tau_n^1\leq |x-x_n^1|-|x_n^1|\leq |x|\leq |x-x_n^1|+|x_n^1|\leq (1+\Lambda)\tau_n^1$$ and $$C^{-1}\tau_n^1\leq \tau_n^i\leq |x-x_n^i|\leq |x-x_n^1|+|x_n^1-x_n^i|\leq C\tau_n^1,\ i=1,...,I,$$ which implies $$osc_{\Omega_n}\log dist(x,\{x^1_n,...,x_n^I\})+osc_{\Omega_n}\log|x|\leq C.$$ Since $w_n(x)$ has fast decay on $\partial B_{\tau_n^i}(x_n^i)$, we get that $w_n(x)$ has fast decay in $\Omega_n$ and
\begin{align}\label{ineq:12}
\int_{\Omega_n}\lambda_ne^{\bar{u}_n}|x|^{2N} e^{w_n}dx&\leq o(1)\int_{\Omega_n}\frac{1}{|dist(x,\{x_n^1,x_n^2,...,x_n^{I}\})|^2}dx\notag\\
&\leq  o(1)\sum_{i=1}^{I}\int_{\Omega_n}\frac{1}{|x-x_n^i|^2}dx=o(1),
\end{align} which implies \eqref{equ:13} immediately.

Similarly to Proposition \ref{prop:02}, on one hand, by using \eqref{ineq:12} and Proposition \ref{prop:01}, we obtain
\begin{align*}
&2\int_{B_{\Lambda\tau_n^1}(x_n^1)}\lambda_ne^{\bar{u}_n}|x|^{2N} e^{w_n}(1-\frac{1}{2}e^{\bar{u}_n}|x|^{2N} e^{w_n})dx\\&=2\sum_{i=1}^I\int_{B_{\Lambda\tau_n^i}(x_n^i)}\lambda_ne^{\bar{u}_n}|x|^{2N} e^{w_n}(1-\frac{1}{2}e^{\bar{u}_n}|x|^{2N} e^{w_n})dx=\pi\sum_{i=1}^{I}b_i^2-4\pi Nb_1+o(1).
\end{align*}

On the other hand, using global type Pohozaev identity of Lemma \ref{lem:pohozaev} in $B_{\Lambda\tau_n^1}(x_n^1)$, we have
\begin{equation}
2\int_{B_{\Lambda\tau_n^1}(x_n^1)}\lambda_ne^{\bar{u}_n}|x|^{2N} e^{w_n}(1-\frac{1}{2}e^{\bar{u}_n}|x|^{2N} e^{w_n})dx=\pi\left(\sum_{i=1}^{I}b_i\right)^2-4\pi N\sum_{i=1}^{I}b_i+o(1).
\end{equation}
Then we get $$\sum_{i=1}^{I}b_i^2-4 Nb_1=\left(\sum_{i=1}^{I}b_i\right)^2-4 N\sum_{i=1}^{I}b_i,$$ which implies $$\sum_{2\leq i,j\leq I,i\neq j}b_ib_j=(2N-b_1)\sum_{i=2}^{I}b_i.$$ This is a contradiction since $b_1\geq 4N+4$ and $2N-b_1<0$.

\

\textbf{Case  II:}  if there exists $i\in \{2,...,I\}$ such that $\lim\frac{\tau_n^i}{\tau_n^1}=0$.

\

By a almost same argument as in case 2 of Proposition \ref{prop:02}, we will also get a contradiction.

We proved the proposition.

\end{proof}

\

Now, at the end of this section, we prove our main theorems.

\

\begin{proof}[\textbf{Proof of  Theorem \ref{thm:main-1}.}] Firstly, by Lemma \ref{lem:02}, we get $c=0$. Next, We divide the proof into the following two steps.

\

\textbf{Step 1:} If $\Sigma_n$ contains one point $\{x_n^1\}$ and $(x_n^1,\mu_n^1)$ is a singular bubble, then  conclusion $(2-1)$ follows.

\

By Proposition \ref{prop:03}, we know $\Sigma_n=\{x_n^1\}$. Since $(x_n^1,\mu_n^1)$ is the only singular bubble and $\tau_n^1\geq \frac{1}{2}$, by Proposition \ref{prop:01}, we have $$\alpha_n(\frac{1}{2};x^1_n)=b_1+o(1),$$ which implies $$\lim_{\rho\to 0}\lim_{n\to\infty}\alpha_n(\rho;0)=b_1.$$ Thus, conclusion $(2-1)$ follows immediately.

\

\textbf{Step 2:} If $\Sigma_n$ contains a point $x_n^i$ such that $(x_n^i,\mu_n^i)$ is not a singular bubble, then conclusion $(2-2)$ follows.

\

For simplicity of notation, we assume $i=1$. In this case, by proposition \ref{prop:02}, we know either $\Sigma_n=\{x_n^1\}$ or $\lim\frac{|x_n^1|}{\tau_n^1}\leq C$.

We first claim that $\Sigma_n$ contains more than two points. In fact, if not, since $\Sigma_n$ contains only one point $\{x_n^1\}$, then  we have $\tau_n^1\geq \frac{1}{2}$  which implies $s_n^1=\frac{3}{4}|x_n^1|$.

On one hand, since $(x_n^1,\mu_n^1)$ is not a singular bubble, by Proposition \ref{prop:01}, we have $w_n(x)$ has fast decay on $\partial B_{s_n^1}(x_n^1)$ and $$\alpha_n(s_n^1;x^1_n)=b_1+o(1)$$ and $$\int_{B_{s^1_n}(x^1_n)}2\lambda_ne^{\bar{u}_n}|x|^{2N}e^{\zeta(x)}e^{w_n}[1-\frac{1}{2}e^{\bar{u}_n}|x|^{2N}e^{\zeta(x)}e^{w_n}]dx =\pi b_1^2+o(1),$$ where $b_1\geq 4$.

On the other hand, since $\lim\frac{|x_n^1|}{s_n^1}\leq C$, by global type Pohozaev identity of Lemma \ref{lem:pohozaev} in $B_{s^1_n}(x^1_n)$, we get $$\int_{B_{s^1_n}(x^1_n)}2\lambda_ne^{\bar{u}_n}|x|^{2N}e^{\zeta(x)}e^{w_n}[1-\frac{1}{2}e^{\bar{u}_n}|x|^{2N}e^{\zeta(x)}e^{w_n}]dx =\pi b_1^2-4\pi Nb_1+o(1).$$ This yields $$\pi b_1^2-4\pi Nb_1=\pi b_1^2+o(1),$$ which is a contradiction. Thus, $\Sigma_n$ must contain more than two points.

\

By Proposition \ref{prop:02} and \ref{prop:03}, we know that $\Sigma_n$ contains no singular bubble and $$\lim\frac{|x_n^i|}{\tau_n^i}\leq C,\ \ i=1,...,m.$$ By Proposition \ref{prop:01}, we know that $\{B_{s_n^i}(x_n^i)\}_{ i=1}^m$ are pairwise disjoint bubbling disks where $s_n^i:=\min\{\tau_n^i,\frac{3}{4}|x_n^i|\}$.  Then we shall consider the following cases.

\

\textbf{Case 1:} If there exists $i\in \{1,...,m\} $ such that $\lim\frac{|x_n^i|}{\tau_n^i}=0$.

\

For simplicity of notation, we assume $i=1$. In this case, we see that $s_n^1=\frac{3}{4}|x_n^1|$. On one hand, by Proposition \ref{prop:01}, we have
\begin{equation}
2\int_{B_{s_n^1}(x_n^1)}\lambda_ne^{\bar{u}_n}|x|^{2N}e^{\zeta} e^{w_n}(1-\frac{1}{2}e^{\bar{u}_n}|x|^{2N}e^{\zeta} e^{w_n})dx=\pi b_1^2+o(1).
\end{equation}
On the other hand, since $|x_n^1|=o(1)\tau_n^1$, then $s_n^1=o(1)dist(\partial B_{s_n^1}(x_n^1),\Sigma_n\setminus \{x_n^1\})$. Using global type Pohozaev identity of Lemma \ref{lem:pohozaev} in $B_{s_n^1}(x_n^1)$, we have
\begin{equation}
2\int_{B_{s_n^1}(x_n^1)}\lambda_ne^{\bar{u}_n}|x|^{2N}e^{\zeta} e^{w_n}(1-\frac{1}{2}e^{\bar{u}_n}|x|^{2N} e^{\zeta}e^{w_n})dx=\pi b_1^2-4\pi Nb_1+o(1).
\end{equation}
Thus, we get $$ b_1^2=b_1^2-4 Nb_1+o(1),$$ which is a contradiction.

\

\textbf{Case 2:} $\lim\frac{|x_n^i|}{\tau_n^i}\geq C^{-1},\ \ i=1,...,m$ for some constant $C>0$.

\

In this case, we first take a smallest bubbling disk denoted by $B_{s_n^1}(x_n^1)$. Next, we claim: $w_n(x)$ has fast decay on $\partial B_{\frac{1}{2}}(x_n^1)$ and
\begin{equation}\label{equ:19}
\alpha_n(\frac{1}{2};x_n^1)=\sum_{i=1}^{m}\alpha_n(s_n^i;x_n^i)+o(1)=\sum_{i=1}^{m}b_i+o(1)
\end{equation} and
\begin{align}\label{equ:20}
&2\int_{B_{\frac{1}{2}}(x_n^1)}\lambda_ne^{\bar{u}_n}|x|^{2N}e^{\zeta} e^{w_n}(1-\frac{1}{2}e^{\bar{u}_n}|x|^{2N}e^{\zeta} e^{w_n})dx\notag\\&=\sum_{i=1}^m2\int_{B_{ s_n^i}(x_n^i)}\lambda_ne^{\bar{u}_n}|x|^{2N}e^{\zeta} e^{w_n}(1-\frac{1}{2}e^{\bar{u}_n}|x|^{2N}e^{\zeta} e^{w_n})dx.
\end{align}

 We prove by a induction argument on the numbers of bubbling disks $B_{s_n^i}(x_n^i),\ i=1,...,m$.

\

\text{Step 2-1:} We prove for two bubbling disks, i.e. $m=2$.

\

In this case, we have two bubbling disks $B_{s_n^1}(x_n^1)$ and $B_{s_n^2}(x_n^2)$, where $B_{s_n^1}(x_n^1)$ is a smallest bubbling disk satisfying $$C^{-1}\tau_n^1\leq s_n^1\leq \tau_n^1 ,\ \ C^{-1}s_n^1\leq |x_n^1|\leq Cs_n^1.$$ It is easy to see that $$\tau_n^1=\tau_n^2,\ \ C^{-1}s_n^1\leq s_n^2\leq Cs_n^1,\ \  C^{-1}s_n^1\leq |x_n^2|\leq Cs_n^1.$$ By the proof of Proposition \ref{prop:02} or Proposition \ref{prop:03}, we can show that there exists a positive constant $\Lambda>100$ such that $B_{s_n^1}(x_n^1)\cup B_{s_n^2}(x_n^2)\subset B_{\Lambda s_n^1}(x_n^1)$, $w_n(x)$ has fast decay in $ B_{\Lambda s_n^1}(x_n^1)\setminus \cup_{i=1}^2B_{s_n^i}(x_n^i)$ and
\begin{equation*}
\alpha_n(\Lambda s_n^1;x_n^1)=\sum_{i=1}^{2}\alpha_n(s_n^i;x_n^i)+o(1)=\sum_{i=1}^{2}b_i+o(1)
\end{equation*} and
\begin{align*}
&2\int_{B_{\Lambda s_n^1}(x_n^1)}\lambda_ne^{\bar{u}_n}|x|^{2N}e^{\zeta} e^{w_n}(1-\frac{1}{2}e^{\bar{u}_n}|x|^{2N}e^{\zeta} e^{w_n})dx\notag\\&=\sum_{i=1}^22\int_{B_{ s_n^i}(x_n^i)}\lambda_ne^{\bar{u}_n}|x|^{2N}e^{\zeta} e^{w_n}(1-\frac{1}{2}e^{\bar{u}_n}|x|^{2N}e^{\zeta} e^{w_n})dx.
\end{align*}

Moreover, using a local and global type Pohozaev identity, we have the following relation $$b_1^2+b_2^2=(b_1+b_2)^2-4N(b_1+b_2),$$ which implies that $$b_1+b_2\geq 4(1+N).$$

By \eqref{ineq:10}, we have
\begin{align*}
\frac{d}{dr}\bar{w}_n=\frac{1}{2\pi r}\int_{ B_r(x_n^1)}\Delta w_n(x) dx&= -\frac{\alpha_n(r;x^1_n)}{r}\\&\leq -\frac{4(1+N)+o(1)}{r},\ \ \forall \ \Lambda s_n^1\leq r=|x-x_n^1|\leq \frac{1}{2}.
\end{align*}

Similarly to the proof of Proposition \ref{prop:01}, we can prove that $w_n(x)$ has fast decay in $B_{\frac{1}{2}}(x_n^1)\setminus B_{\Lambda s_n^1}(x_n^1)$ and $$\int_{B_{\frac{1}{2}}(x_n^1)\setminus \cup_{i=1}^2B_{s_n^i}(x_n^i)}\lambda_ne^{\bar{u}_n}|x|^{2N}e^{\zeta}e^{w_n}dx=o(1)$$ which immediately implies \eqref{equ:19} and \eqref{equ:20}. We proved for $m=2$.

\

\text{Step 2-2:} Assuming when the number of bubbling disks is less than $m-1$, we can show that $w_n(x)$ has fast decay on $\partial B_{\frac{1}{2}}(x_n^1)$ and \eqref{equ:19}, \eqref{equ:20} hold. Next, we prove it  still holds for $m$ bubbling disks.

\

In this case, there holds $C^{-1}\tau_n^1\leq |x_n^1|\leq C\tau_n^1$ which implies $C^{-1}\tau_n^1\leq s_n^1\leq C\tau_n^1$. Recall that $B_{s_n^1}(x_n^1)$ is a smallest bubbling disk. Since $\Sigma_n$ has more than two points, for $B_{s_n^1}(x_n^1)$, there must be a nearest bubbling disk denoted by $B_{s_n^2}(x_n^2)$ and maybe some almost nearest bubbling disks denoted by $B_{s_n^i}(x_n^i),\ i=3,...,I$. Noting that $$C^{-1}s_n^1\leq s_n^i\leq \tau_n^i\leq |x_n^i-x_n^1|\leq C|x_n^2-x_n^1|\leq C\tau_n^1\leq Cs_n^1,\ i=2,...,I,$$ then there exists a positive constant $\Lambda>0$ such that $$\cup_{i=1}^IB_{s_n^i}(x_n^i)\subset B_{\Lambda s_n^1}(x_n^1),\ \ and\ \ 0\in B_{\Lambda s_n^1}(x_n^1).$$

Noting that $s_n^1\leq s_n^i\leq Cs_n^1,\ i=2,...,I$, similar to the case 1 of Proposition \ref{prop:02} or case I of Proposition \ref{prop:03}, we can show that $$\alpha_n(\Lambda s_n^1;x_n^1)=\sum_{i=1}^Ib_i+o(1)$$
and
\begin{align*}
&2\int_{B_{\Lambda s_n^1}(x_n^1)}\lambda_ne^{\bar{u}_n}|x|^{2N}e^{\zeta} e^{w_n}(1-\frac{1}{2}e^{\bar{u}_n}|x|^{2N}e^{\zeta} e^{w_n})dx\notag\\&=\sum_{i=1}^I2\int_{B_{ s_n^i}(x_n^i)}\lambda_ne^{\bar{u}_n}|x|^{2N}e^{\zeta} e^{w_n}(1-\frac{1}{2}e^{\bar{u}_n}|x|^{2N}e^{\zeta} e^{w_n})dx=\sum_{i=1}^Ib_i^2+o(1).
\end{align*}

Writing $B_{\Lambda s_n^1}(x_n^1)$ as a new bubbling disk, together with the left bubbling disks $B_{s_n^i}(x_n^i),\ i=I+1,...,m$ and the induction assumption, we get  $w_n(x)$ has fast decay on $\partial B_{\frac{1}{2}}(x_n^1)$ and  $$\alpha_n(\frac{1}{2};x_n^1)=\alpha_n(\Lambda s_n^1;x_n^1)+\sum_{i=I}^m \alpha_n(\Lambda s_n^i;x_n^i)+o(1)=\sum_{i=1}^mb_i+o(1)$$ and
\begin{align*}
&2\int_{B_{\frac{1}{2}}(x_n^1)}\lambda_ne^{\bar{u}_n}|x|^{2N}e^{\zeta} e^{w_n}(1-\frac{1}{2}e^{\bar{u}_n}|x|^{2N}e^{\zeta} e^{w_n})dx\notag\\&=2\int_{B_{\Lambda s_n^1}(x_n^1)}\lambda_ne^{\bar{u}_n}|x|^{2N}e^{\zeta} e^{w_n}(1-\frac{1}{2}e^{\bar{u}_n}|x|^{2N}e^{\zeta} e^{w_n})dx\\&\quad + 2\sum_{i=I+1}^{m}\int_{B_{ s_n^i}(x_n^i)}\lambda_ne^{\bar{u}_n}|x|^{2N}e^{\zeta} e^{w_n}(1-\frac{1}{2}e^{\bar{u}_n}|x|^{2N}e^{\zeta} e^{w_n})dx.
\end{align*}

We finished the proof of induction, i.e. $w_n(x)$ has fast decay on $\partial B_{\frac{1}{2}}(x_n^1)$  and \eqref{equ:19}, \eqref{equ:20} hold.

We continue to show the conclusion of $(2-2)$. In fact, we first easy to see that \eqref{equ:19} implies $$\lim_{\rho\to 0}\lim_{n\to\infty}\alpha_n(\rho;0)=\sum_{i=1}^mb_i+o(1).$$  Secondly, on one hand, by \eqref{equ:20} and Proposition \ref{prop:01}, we have
\begin{align*}
&2\int_{B_{\frac{1}{2}}(x_n^1)}\lambda_ne^{\bar{u}_n}|x|^{2N}e^{\zeta} e^{w_n}(1-\frac{1}{2}e^{\bar{u}_n}|x|^{2N}e^{\zeta} e^{w_n})dx=\pi\sum_{i=1}^mb_i^2+o(1).
\end{align*}

On the other hand, since $w_n(x)$ has fast decay on $\partial B_{\frac{1}{2}}(x_n^1)$, by global type Pohozaev identity, we obtain
\begin{align*}
&2\int_{B_{\frac{1}{2}}(x_n^1)}\lambda_ne^{\bar{u}_n}|x|^{2N}e^{\zeta} e^{w_n}(1-\frac{1}{2}e^{\bar{u}_n}|x|^{2N}e^{\zeta} e^{w_n})dx=\pi(\sum_{i=1}^mb_i)^2-4\pi N\sum_{i=1}^mb_i+o(1).
\end{align*}
Thus, we have the following relation
\begin{equation*}
\sum_{i=1}^mb_i^2=\left(\sum_{i=1}^mb_i\right)^2-4N\sum_{i=1}^mb_i.
\end{equation*}

We finished the proof of Theorem \ref{thm:main-1}.

\end{proof}

\

\begin{proof}[\textbf{Proof of main Theorem \ref{thm:main-2}.}]

Since blow-up point $0$ is not a vortex point, i.e. $N=0$, by Theorem \ref{thm:1}, we know that $c=0$. By Lemma \ref{lem:pohozaev}, we see that there is only one kind of Pohozaev type identity in this case (i.e. local and global type Pohozaev identity are the same).  If there are more than two bubbles, denoting their energies by $b_i,\ i=1,...,m$, then by Theorem \ref{thm:main-1}, we get $$\sum_{i=1}^mb_i^2=(\sum_{i=1}^mb_i)^2,$$ which is a contradiction for $m\geq 2$ and $b_i\geq 4,\ i=1,...,m$. Thus there must be one bubble. We proved Theorem \ref{thm:main-2}.

\end{proof}

\

\begin{proof}[\textbf{Proof of main Theorem \ref{thm:main}.}] Conclusions of Theorem \ref{thm:main} is a direct consequence of Theorem \ref{thm:main-1} and Theorem \ref{thm:main-2}.
\end{proof}

\

\begin{proof}[\textbf{Proof of main Theorem \ref{thm:main-3}.}] If the vortex point $p_i$ is a blow-up point, then Theorem \ref{thm:main} tells us that the local mass $$\lim_{\rho\to 0}\lim_{n\to\infty}\alpha_n(\rho; p_i)\geq 4(1+N_i).$$ Since the total energy is $2 \bar{N}$ (divided by $2\pi$), thus we have $$2(1+N_i)\leq \bar{N},$$ which implies $\bar{N}\geq 4$.

%

For $\bar{N}=5$, if $q_1$ is a vortex point, from inequality $$2(1+N_1)\leq \bar{N},$$ we have $N_1=1$ and $S=\{q_1\}$. We claim the simple blow-up property holds, i.e. $(3-2)$ of Theorem \ref{thm:main} will not happen. Otherwise, in this case, there are just two bubbles with energies $b_i\geq 4,\ i=1,2$ satisfying $b_1+b_2=2\bar{N}=10$ and $b_1^2+b_2^2=(b_1+b_2)^2-4(b_1+b_2)$. Solving directly, we get $$b_i=5\pm\sqrt{5},\ i=1,2,$$ which is a contradiction that $5-\sqrt{5}<4$. Thus, the simple blow-up property holds.

For $\bar{N}=6$, if $S=\{q_1\}$ and $q_1$ is a vortex point with multiplicity $N_1=1$, since the total energy is $2\bar{N}=12$, if there are more than two bubbles, then we have the following two cases. $(i)$ if there are three bubbles, then there must be $b_1=b_2=b_3=4$. This is a contradiction to equality $b_1^2+b_2^2+b_3^2=(b_1+b_2+b_3)^2-4(b_1+b_2+b_3)$.  $(ii)$ if there are two bubbles, then we have $b_1+b_2=2\bar{N}=12$ and $b_1^2+b_2^2=(b_1+b_2)^2-4(b_1+b_2)=96$ which implies $$b_i=6\pm 2\sqrt{3},\ i=1,2.$$ This is also a contradiction since $6-2\sqrt{3}<4$.

For $\bar{N}=7$, if $q_1$ is a vortex point, from inequality $$2(1+N_1)\leq \bar{N},$$ we have following the two cases. $(1)\ N_1=1.$ In this case,  since the total energy is $2\bar{N}=14$, if there are more than two bubbles, then we have following two cases. $(1-i)$ If there are three bubbles whose energies denoted by  $b_1,\ b_2,\ b_3$. Then we have $b_1+b_2+b_3=14$ and $b_1^2+b_2^2+b_3^2=(b_1+b_2+b_3)^2-4(b_1+b_2+b_3)=140$. One can check by computer that there is no solution $(b_1,b_2,b_3)$ such that $b_i\geq 4,\ i=1,2,3$, which is a contradiction.  $(1-ii)$ If there are two bubbles, then we have $b_1+b_2=2\bar{N}=14$ and $b_1^2+b_2^2=(b_1+b_2)^2-4(b_1+b_2)=140$ which implies $$b_i=7\pm \sqrt{21},\ i=1,2.$$ This is also a contradiction since $7-\sqrt{21}<4$.

$(2)\ N_1=2$. Similarly to first case, we have the following two cases. $(2-i)$ If there are three bubbles whose energies denoted by  $b_1,\ b_2,\ b_3$. Then we have $b_1+b_2+b_3=14$ and $b_1^2+b_2^2+b_3^2=(b_1+b_2+b_3)^2-8(b_1+b_2+b_3)=84$. One can check by computer that there is no solution $(b_1,b_2,b_3)$ such that $b_i\geq 4,\ i=1,2,3$, which is a contradiction.  $(2-ii)$ If there are two bubbles, then we have $b_1+b_2=2\bar{N}=14$ and $b_1^2+b_2^2=(b_1+b_2)^2-8(b_1+b_2)=84$ which  is also a contradiction since $196=(b_1+b_2)^2\leq 2(b_1^2+b_2^2)\leq 168$.

We finished all the proof.
\end{proof}

\


\providecommand{\bysame}{\leavevmode\hbox to3em{\hrulefill}\thinspace}
\providecommand{\MR}{\relax\ifhmode\unskip\space\fi MR }
\providecommand{\MRhref}[2]{%
  \href{http://www.ams.org/mathscinet-getitem?mr=#1}{#2}
}
\providecommand{\href}[2]{#2}

\end{document}